\documentclass[a4paper]{amsart}
\usepackage[latin1]{inputenc}
\usepackage{amssymb}
\usepackage{amsmath}
\usepackage{mathtools}
\usepackage{mathrsfs}
\usepackage{eufrak}
\usepackage{amsthm}
\usepackage{amsfonts}
\usepackage{textcomp}
\usepackage{graphicx}
\usepackage[pdftex]{color}
\usepackage{paralist}
\usepackage[shortlabels]{enumitem}
\usepackage{hyperref}
\usepackage{comment}
\usepackage[arrow, matrix, curve]{xy}
\usepackage{tikz} 
\include{quiver.sty}
\usetikzlibrary{matrix,patterns,shapes,decorations.pathmorphing,decorations.pathreplacing,calc,arrows} \usepackage{tikz-cd} 
\usepackage{tkz-graph}
\usepackage{mdwlist}

\usepackage[capitalise, noabbrev]{cleveref}%references, include last
\newtheorem*{corollary*}{Corollary}
\newtheorem{theorem}{Theorem}[section]

\newtheorem{corollary}[theorem]{Corollary}
\newtheorem{lemma}[theorem]{Lemma}
\newtheorem{proposition}[theorem]{Proposition}
\newtheorem{question}[theorem]{Question}

\newtheorem*{claim*}{Claim}

\theoremstyle{definition}
\newtheorem{definition}[theorem]{Definition}

\newtheorem*{theorem }{Theorem}
\newtheorem{remark}[theorem]{Remark}

\newtheorem{problem}[theorem]{Problem}
\newtheorem{example}[theorem]{Example}

\theoremstyle{remark}

\numberwithin{equation}{theorem}

\makeatletter
\renewcommand{\mod}{\operatorname{mod}}
\newcommand{\proj}{\operatorname{proj}}
\newcommand{\ind}{\operatorname{ind}}
\newcommand{\inj}{\operatorname{inj}}

\newcommand{\Ext}{\operatorname{Ext}}
\newcommand{\Tor}{\operatorname{Tor}}
\newcommand{\End}{\operatorname{End}}
\newcommand{\Hom}{\operatorname{Hom}}
\newcommand{\add}{\operatorname{\mathrm{add}}}

\newcommand{\rad}{\operatorname{\mathrm{rad}}}
\newcommand{\soc}{\operatorname{\mathrm{soc}}}

\newcommand{\AR}{\operatorname{AR}}
\newcommand{\Tr}{\operatorname{Tr}}
\newcommand{\Mod}{\operatorname{Mod}}
\newcommand{\refl}{\operatorname{ref}}
\newcommand{\grade}{\operatorname{grade}}
\newcommand{\sgrade}{\operatorname{s.grade}}

\renewcommand{\mod}{\operatorname{mod}}

\newcommand{\Z}{\mathbb{Z}}
\newcommand{\A}{\mathcal{A}}
\newcommand{\D}{\mathcal{D}}
\newcommand{\E}{\mathcal{E}}
\newcommand{\RHom}{\operatorname{\mathsf{R}Hom}}
\newcommand{\REnd}{\operatorname{\mathsf{R}End}}
\DeclareMathOperator{\cHom}{\mathcal{H}\hspace{-1pt}{\it om}}

\renewcommand{\ker}{\mathrm{Ker}}
\newcommand{\Ker}{\operatorname{Ker}}
\newcommand{\coker}{\mathrm{Coker}}
\newcommand{\Coker}{\operatorname{Coker}}
\renewcommand{\Im}{\operatorname{Im}}
\newcommand{\Db}{\mathrm{D}^{\mathrm{b}}}

\newcommand{\idim}{\operatorname{idim}}
\newcommand{\pdim}{\operatorname{pdim}}
\newcommand{\gldim}{\operatorname{gldim}}

\newcommand{\Fac}{\mathrm{Fac}}

\newcommand{\Y}{\mathcal{Y}}
\def\T{\mathcal{T}}
\def\F{\mathcal{F}}
\def\C{\mathcal{C}}
\def\op{\mathrm{op}}
\def\La{\Lambda}
\def\ij{\mathrm{i}}
\def\ninj{\mathrm{ni}}
\def\rD{\mathrm{D}}

\def\Db{\mathrm{D^b}}

\def\rsimeq{\rotatebox{-90}{$\simeq$}}
\def\xsimeq{\xrightarrow{\simeq}}
\def\ysimeq{\xleftarrow{\simeq}}
\def\Om{\Omega}
\def\del{\delta}
\def\lotimes{\otimes^\mathrm{L}}
\def\disoplus{\displaystyle\bigoplus}
\def\Ga{\Gamma}
\def\Si{\Sigma}
\begin{document}

\title[Spherical modules and the Auslander--Gorenstein condition]{Spherical modules and the Auslander--Gorenstein condition for Auslander--Yoneda algebras}

\date{\today}

\begin{abstract}
For a finite dimensional algebra $A$ of finite global dimension we study the Auslander--Yoneda algebra defined as the Yoneda algebra of the direct sum of all indecomposable $A$-modules. We show that the Auslander--Yoneda algebra is an Auslander--Gorenstein algebra if and only if every indecomposable left and right $A$-module is spherical in the sense of Auslander and Bridger. 
This motivates the study of spherical algebras defined by the condition that every indecomposable module is spherical.
We characterize spherical algebras by a certain natural pair of subcategories being a split torsion pair.
Moreover, we prove that representation-finite algebras which are spherical are directed and give a full classification of spherical Nakayama algebras. Furthermore, we show that replicated algebras of hereditary algebras are spherical. 
As a final application of the new notion of spherical algebras, we give a negative answer to a question of Venjakob on Auslander regular algebras in general, but show that there is a positive answer when assuming that every indecomposable left $A$-module is spherical.
\end{abstract}

\subjclass[2020]{Primary 16G10, 16E65, 16G70}

\keywords{Yoneda algebra, spherical modules, Auslander--Gorenstein algebras, replicated algebras}

\author[F. Bauwens]{Finn Bauwens}
\address[Finn Bauwens]{Mathematical Institute of the University of Bonn, Endenicher Allee 60, 53115 Bonn, Germany}
\email{s6fibauw@uni-bonn.de}

\author[N. Hanihara]{Norihiro Hanihara}
\address[Norihiro Hanihara]{Faculty of Mathematics, Kyushu University, 744 Motooka, Nishi-ku, Fukuoka, 819-0395, Japan}
\email{hanihara@math.kyushu-u.ac.jp}

\author[R. Marczinzik]{Ren\'{e} Marczinzik}
\address[Ren\'{e} Marczinzik]{Mathematical Institute of the University of Bonn, Endenicher Allee 60, 53115 Bonn, Germany}
\email{marczire@math.uni-bonn.de}

\author[M.~Plogmann]{Marvin Plogmann}%
\address[Marvin Plogmann]{%
	Mathematisches Institut, %
	Universit\"at zu K\"oln, %
	Weyertal 86-90, %
	50931 K\"oln, %
	Germany%
}%
\email{plogmann@math.uni-koeln.de}%
\urladdr{https://sites.google.com/view/marvinplogmann}%

\author[J.~Thomm]{Jan Thomm}%
\address[Jan Thomm]{%
	Mathematisches Institut, %
	Universit\"at zu K\"oln, %
	Weyertal 86-90, %
	50931 K\"oln, %
	Germany%
}%
\email{jthomm@math.uni-koeln.de}%

%\thanks{This research started at the Junior Research Retreat in Bad Boll 2025 which was supported by the Hausdorff School for Mathematics (HSM) and funded by the Deutsche Forschungsgemeinschaft (DFG, German Research Foundation) under Germany's Excellence Strategy -- EXC-2047/2 -- 390685813.
%The second author is supported by JSPS KAKENHI Grant Numbers JP22KJ0737/JP25K17233. The second author is supported by Kavli Institute for the Physics and Mathematics of the Universe (WPI), The University of Tokyo, as an affiliate member.}

\maketitle

\setcounter{tocdepth}{1}
\tableofcontents

\section{Introduction}

A two-sided noetherian ring $\La$ is called \emph{Auslander--Gorenstein}
if there is a finite injective coresolution 
$$0 \rightarrow \La \rightarrow I^0 \rightarrow I^1 \rightarrow \cdots \rightarrow I^n \rightarrow 0$$
of the regular module $\La$ such that the flat dimension of $I^i$ is at most $i$ for all $i \geq 0$.
Auslander--Gorenstein rings are the non-commutative generalisation of the classical local commutative Gorenstein rings in algebraic geometry by results of Bass \cite{B}. Several other important classes of rings are Auslander--Gorenstein, such as enveloping algebras of finite dimensional Lie algebras, Weyl algebras and more generally the ring of $\mathbb{C}$-linear differential operators on an irreducible smooth subvariety of affine
space, we refer for example to the textbook \cite{VO} and the two surveys \cite{Cl} and \cite{KM} for more on Auslander--Gorenstein algebras.
An important feature of Auslander--Gorenstein rings $\La$ that are finitely generated over a noetherian center is that they come with a canonical bijection acting on the simple modules, the \emph{grade bijection}, first discovered by Iyama in \cite{I1}.
When $\La$ is even a finite dimensional algebra, Iyama's grade bijection coincides with another bijection for Auslander--Gorenstein algebras discovered by Auslander and Reiten \cite{AR1} as was recently shown in \cite{KMT}.
This \emph{Auslander--Reiten} bijection for Auslander--Gorenstein algebras sends indecomposable injective modules $I$ to the last term $P$ in the minimal projective resolution of $I$, which is an indecomposable projective module. 
When $A$ is a finite dimensional representation-finite algebra and $M$ is the direct sum of all indecomposable $A$-modules, then the \emph{Auslander algebra} of $A$ is defined as the endomorphism ring $B=\End_A(M)$.
By the celebrated Auslander correspondence \cite{AusQu}, Auslander algebras have global dimension at most 2 and dominant dimension at least two and thus are Auslander--Gorenstein. Since the simple $B$-modules correspond bijectively to the indecomposable $A$-modules, the Auslander--Reiten bijection describes a bijection on the indecomposable $A$-modules. It was shown in \cite{MTY} in slightly more generality that the Auslander--Reiten bijection on $B$ corresponds to the Auslander--Reiten translate $\tau$ acting on the indecomposable non-projective $A$-modules and acting as the Nakayama functor $\nu$ on indecomposable projective modules.
Thus, the Auslander--Reiten bijection recovers the classical Auslander--Reiten translate when specialised to Auslander algebras.

Let $A$ be a ring and $M$ an $A$-module. An important role in representation theory, algebraic topology, homological algebra and several other fields is played by the Yoneda algebra $\bigoplus\limits_{i=0}^{\infty}{\Ext_A^i(M,M)}$ of $M$. For example when $A$ is a Koszul algebra with radical $J$, the quadratic dual is given by taking $M=A/J$ and that leads to the important theory of Koszul duality, see for example \cite{BGS}.

In this article we want to focus on the Auslander--Gorenstein property of \emph{Auslander--Yoneda} algebras $Y(A)$ for finite dimensional algebras $A$, that are defined by taking for $M$ the direct sum of all indecomposable $A$-modules up to isomorphism.
This ring was first studied from a homological viewpoint by Hanihara \cite{Ha}, where he described several important homological properties.

In this article we want to answer the following two questions:
\begin{question}
Let $A$ be a finite dimensional algebra of finite global dimension. 
When is the Auslander--Yoneda algebra of $A$ Auslander--Gorenstein?
And what is the Auslander--Reiten bijection in that case?
\end{question}

We will give complete answers to those questions which give a new link to the study of spherical modules that was initiated by Auslander and Bridger in \cite{AB}.
Following \cite{AB}, a module $M$ is called \emph{$n$-spherical} if $\pdim M=n$ and $\Ext_A^i(M,A)=0$ for all $1 \leq i <n$. We simply call $M$ spherical if it is $n$-spherical for some $n \geq 0$. $M$ is called \emph{$n$-perfect} if it is $n$-spherical and additionally $\Hom_A(M,A)=0$, we refer for example to \cite{BH} for the relevance of perfect modules in commutative algebra. Dually, we define a module $M$ of injective dimension $n$ to be \emph{$n$-cospherical} if $\Ext_A^i(DA,M)=0$ for all $1 \leq i <n$. $M$ is called \emph{$n$-coperfect} if it is $n$-cospherical and $\Hom_A(D(A),M)=0$.

The significance of these vanishing conditions is already visible in commutative algebra and algebraic geometry: the Auslander--Bridger spherical filtration, later appearing in dual form in work of Horrocks on vector bundles and in the syzygy-theoretic work of Evans--Griffith, organizes modules according to the degree in which their Ext-cohomology is concentrated \cite{AB,Ho1,Ho2,EG1,EG2}. Thus spherical modules provide a homological language for phenomena governed by grade, syzygies, reflexivity and vector-bundle extension problems. More recently, they have reappeared in connection with quasi-Auslander--Gorenstein conditions, delooping level and finitistic dimension, stable categories of torsion-free modules, and Auslander-type approximation theory \cite{G,Hu,O1,O2}.

We introduce the following notion.
\begin{definition}
We call a finite dimensional algebra $A$ \emph{(co)spherical} if every indecomposable (left) right $A$-module is spherical and we call $A$ \emph{bispherical} if every indecomposable left and right module is spherical. %Note that this implies that $A$ has finite global dimension.
\end{definition}
By definition, each (co)spherical algebra has finite global dimension. For example, every hereditary finite dimensional algebra is bispherical, but there are many other classes of examples as we will see later.
For the next theorem, we remark that a graded ring is Auslander--Gorenstein in the usual sense if and only if it is Auslander--Gorenstein in the graded sense, we refer for example to \cite[Section 3.3]{LVO} for details, where a proof is given under the assumption of finite global dimension, but a similar proof works also without this assumption. See also \cite[Proposition 1.3 and 1.4]{Gor}.
Using those definitions, we can answer the previous question as in the following theorem:

%For a finite dimensional algebra $A$ of finite global dimension, consider the Yoneda category
%\[ \Y:=\add\{M[i]\mid M\in\mod A, i\in\Z\}\subset\Db(\mod A). \]
\begin{theorem}[Theorem \ref{Yoneda-AG}]
Let $A$ be a finite dimensional algebra of finite global dimension which is representation-finite. Let $M$ be the basic additive generator for $\mod A$ and $Y(A)=\bigoplus_{i\geq0}\Ext^i_A(M,M)$ the Auslander--Yoneda algebra.
\begin{enumerate}
\item $Y(A)$ is (graded) Auslander--Gorenstein if and only if $A$ is bispherical.
\item In this case, the Auslander--Reiten permutation $F$ is given as follows: for each indecomposable $N\in\mod A$, we have
\[  FN=\begin{cases}
 \nu N & \text{ if } N \text{ is projective},\\
 \tau_nN(n) & \text{ if } N \text{ is $n$-perfect},\\
 \Omega^{-1} \tau N &\text{ if } N \text{ is non-projective and } \Hom_A(N,A)\neq0,
 \end{cases}
\]
where $(n)$ is the degree shift of graded $Y(A)$-modules.
\end{enumerate}
\end{theorem}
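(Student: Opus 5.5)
We work in the Yoneda category $\Y=\add\{N[i]\mid N\in\mod A,\ i\in\Z\}\subset\Db(\mod A)$. Graded projective $Y(A)$-modules are then the representable functors $\Hom_{\Db}(-,N[i])|_{\Y}$. The simple graded $Y(A)$-modules are the tops $S_N(i)$ of these projectives, one for each indecomposable $N$ and each shift $i$.

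Since $A$ has finite global dimension, $\Db(\mod A)$ has a Serre functor $\mathbb S=\nu$, and $\mathbb S$ restricts to a shift of $\tau$ on non-projectives. So the first step is to compute minimal projective resolutions of the simples $S_N$ over $\Y$. For $N$ non-projective, the almost split triangle $\tau N\to E\to N\to \tau N[1]$ in $\Db$ gives a complex
\[
\Hom(-,\tau N)\to\Hom(-,E)\to\Hom(-,N)\to S_N\to0 .
\]
Because $\Y$ is not closed under cones, this complex is not exact. Its failure of exactness is measured by the morphisms $X\to \tau N[1]$ with $X\in\Y$. These are nonzero exactly in the degrees where $\Ext^{j}_A(N,A)$, or more precisely $\Ext^j$ from $\Y$ into $\tau N[1]$, is nonzero.

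I will make this precise using Auslander--Reiten duality in $\Db$: $\Hom(X,\tau N[1])\cong D\Hom(N,X)$ for $X$ a module shift. The resolution of $S_N$ therefore continues exactly as the resolution of the truncated functor $\Hom(-,\tau N[1])|_{\Y}$ does. Rewriting $\tau N[1]=\nu N'$ for a projective presentation, one sees that the extra terms come from $\Ext^j_A(N,A)$ for $0\le j\le \pdim N$. The expected output is that $\pdim_{Y(A)} S_N$ is finite and the last term is a single indecomposable projective, with no intermediate terms violating the Auslander condition, precisely when $N$ is spherical.

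If $N$ is $n$-perfect, only $\Ext^n(N,A)\neq0$ survives. The last term should then be $\Hom(-,\tau_nN[n])$, with $\tau_n=\tau\Omega^{n-1}$, which gives the grading shift $(n)$. If $\Hom(N,A)\neq0$ and $N$ is spherical, the resolution ends in degree $2$ or so at $\Omega^{-1}\tau N$. For $N$ projective we instead use the triangle involving $\rad N$, and the answer is $\nu N$ by the same argument as for Auslander algebras in \cite{MTY}.

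For part (1), I will use the characterisation of (graded) Auslander--Gorenstein rings through the vanishing $\Ext^i_{Y}(\Ext^j_Y(S,Y),Y)=0$ for $i<j$, equivalently through the grade of simples together with the permutation property. Concretely, $Y(A)$ is Auslander--Gorenstein iff every simple $S$ has $\Ext^i(S,Y)=0$ for $i<\grade S$ and $\Ext^{\grade S}(S,Y)$ is simple in the opposite category. Via the computation above, this is exactly the statement that the minimal projective resolution of each simple right $Y$-module dualises to a minimal resolution of a simple left module. By $Y(A)^{\op}\cong Y(A^{\op})$, this reduces the left-hand condition to sphericity of left $A$-modules, and that is why both sides, i.e. bisphericity, are needed. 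For the converse, a non-spherical $N$ produces a nonzero $\Ext^j(N,A)$ with $0<j<\pdim N$. This gives an extra summand in the middle of the resolution and hence breaks the Auslander condition.

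The main obstacle is this middle step: identifying $\Hom_{\Db}(\Y,\tau N[1])$ with the Ext-groups $\Ext^j_A(N,A)$, and checking minimality of the resulting resolution, including the grading bookkeeping. I would handle it by first passing through $\nu^{-1}$ and then using $\Hom_{\Db}(X,\nu P[j])\cong D\Hom(P,X[-j])$.
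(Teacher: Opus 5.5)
There is a genuine gap. Working with simple $\Y$-modules cannot produce the result, and the criterion you intend to verify does not characterize the Auslander--Gorenstein property.

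\emph{Simples have no finite resolution.} Already for $A$ hereditary (hence bispherical) one has $\Y=\Db(\mod A)$, which is triangulated, so $\mod\Y$ is a Frobenius category. As soon as $A$ is not semisimple there are non-projective simple $\Y$-modules, and these have infinite projective dimension. So your expected outcome, ``$\pdim S_N<\infty$ with indecomposable last term exactly when $N$ is spherical'', already fails in this case.

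\emph{The criterion for (1) is not necessary.} Since $\grade S$ is by definition the first degree with $\Ext^i(S,Y)\neq0$, your criterion reduces to ``$\Ext^{\grade S}(S,Y)$ is simple for every simple $S$''. This is not implied by the Auslander--Gorenstein property. For the Auslander regular algebra $\Lambda=KA_2$, the simple projective $S=e\Lambda$ has $\grade S=0$, but $\Hom_\Lambda(S,\Lambda)\cong\Lambda e$ is two-dimensional.

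\emph{$F$ is read off from the wrong objects.} $F$ is defined by the last term of the minimal projective resolution of an indecomposable \emph{injective} (equivalently, the minimal injective resolution of an indecomposable projective), not of a simple. The two happen to agree at non-projective vertices of an Auslander algebra, but not in general. In particular $FN=\nu N$ for projective $N$ holds simply because $D\Y(N,-)\cong\Y(-,\nu N)$ is already projective.

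\emph{The first step of your computation is also wrong.} The Serre functor is $\mathbb{S}=(-)\lotimes_A DA$, and $\mathbb{S}N$ is the complex $\nu P_\bullet$ with $H^{-i}=D\Ext^i_A(N,A)$. Hence $\mathbb{S}N\cong\tau N[1]$ only for $1$-perfect $N$. Moreover $\rad_\Y(-,N)$ contains all $\Hom_{\Db}(L[-j],N)=\Ext^j_A(L,N)$ with $j\geq1$, and these need not factor through $E\to N$. So the almost split sequence of $\mod A$ does not give the sink map of $N$ in $\Y$.

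The missing idea is to resolve the injective $\Y$-modules instead. By Hanihara's result $\Y$ is $1$-Iwanaga--Gorenstein. Hence being Auslander--Gorenstein amounts to the projective cover of each indecomposable injective $D\Y(M,-)\cong\Y(-,M\lotimes_A DA)$ being injective (and likewise for $\Y^{\op}$). The last term of the minimal resolution $0\to P^1\to P^0\to D\Y(M,-)\to0$ is then indecomposable and defines $F$.

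The paper represents $M\lotimes_A DA$ by $X=\nu P_\bullet$. From the triangle $BX\to ZX\to X$ it obtains the resolution $0\to\bigoplus_i\Y(-,B^{-i}X[i])\to\bigoplus_i\Y(-,Z^{-i}X[i])\to D\Y(M,-)\to0$. It combines this with the test that $\Y(-,W)$ is injective if and only if $\RHom_A(DA,W)$ is a stalk complex.

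For the ``if'' direction, sphericity of $M$ gives $B^{-i}X=Z^{-i}X$ for $0<i<n=\pdim M$. Cosphericity of $\tau_nM$, together with $\Hom_A(DA,\tau_nM)=0$, makes $\Y(-,\tau_nM[n])$ injective. So the middle term $\Y(-,\nu P_0)\oplus\Y(-,\tau_nM[n])$, and hence the projective cover, is injective. Note that left sphericity is therefore used already for right $\Y$-modules, not only through $Y(A)^{\op}\cong Y(A^{\op})$.

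For the converse, let $m$ be the reduced grade of $M$. The summands $\Y(-,(B^0X)_{\ninj})$ and $\Y(-,B^{-m}X[m])$ of the syzygy survive in the minimal resolution, so indecomposability of the syzygy forces one of them to vanish.
\begin{itemize}
\item If $B^{-m}X=0$, then $m=\pdim M$.
\item If $(B^0X)_{\ninj}=0$, then $\Hom_A(M,A)=0$, and injectivity of the summand $\Y(-,\tau_mM[m])$ of the projective cover forces $m=\pdim M$.
\end{itemize}
This is the precise form of your ``extra summand breaks the Auslander condition''.

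Finally, for non-projective $M$ with $\Hom_A(M,A)\neq0$, $F$ is read off from $(B^0X)_{\ninj}=\Omega^{-1}\tau M$. This module is indecomposable because $\Ext^1_A(DA,\tau M)=0$.
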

In fact, we prove the above result for arbitrary finite dimensional algebras of finite global dimension (not necessarily representation-finite) in terms of {\it Yoneda categories}, we refer to Theorem \ref{Yoneda-AG} for details.

It is a non-trivial task to construct examples of bispherical algebras.
Our next result, in fact, gives a large class of examples of spherical algebras. Recall that for each $m\geq1$, the {\it $m$-replicated algebra} of a finite dimensional algebra $A$ is defined by the $(m+1)\times(m+1)$-matrix algebra
\[ A^{(m)}=
\begin{pmatrix} A&0& 0&\cdots& 0& 0 \\
DA&A&0&\cdots&0&0 \\
0&DA&A&\cdots&0&0 \\
\vdots&\vdots&\vdots&\ddots&\vdots&\vdots \\
0&0&0&\cdots&A&0\\
0&0&0&\cdots&DA&A
\end{pmatrix}. \]
with $DA=\Hom_K(A,K)$, the dual of the regular module having trivial multiplication. The concept of $m$-replicated algebras was first defined and studied in \cite{AI}. Since then the study of replicated algebras found many connections and applications for example related to the study of cluster theory and fractionally Calabi-Yau algebras, we refer for example to \cite{ABST,CIM,CIM2,X}.
\begin{theorem}[Theorem \ref{replication}]
Let $A$ be a hereditary algebra. Then the $m$-replicated algebra of $A$ is also bispherical for any $m \geq 1$.
\end{theorem}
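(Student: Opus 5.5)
We have to show that for $B=A^{(m)}$ with $A$ hereditary, every indecomposable right and left $B$-module $M$ is spherical. That is, if $\pdim M=n$, then $\Ext^i_B(M,B)=0$ for $1\le i<n$.

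\textbf{Reduction to right modules.} Note that $(A^{(m)})^{\op}\cong (A^{\op})^{(m)}$, by transposing the matrix and using $D(A)\cong D(A^{\op})$ as bimodules with the sides swapped. Also $A^{\op}$ is again hereditary. So it suffices to treat right modules.

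\textbf{Derived description.} The plan is to use the well-known derived description of $B$. Let $\Db(\mod A)$ be the derived category; for hereditary $A$ every indecomposable object is a stalk $X[j]$. The indecomposable projective $B$-modules are the $P_k$ ($0\le k\le m$), supported in columns $k$ and $k+1$. We would identify the projective $B$-module coming from column $k$ with the image of $A[k]$, equivalently $\nu^k A$ up to shift. Precisely, $B\cong \End_{\Db(A)}\bigl(\bigoplus_{k=0}^m \nu^{-k}A[?]\bigr)$; more simply, $B$ is the endomorphism algebra of $T=\bigoplus_{k=0}^m A[k]$ in the repetitive category $\Db(A)/\!\sim$ restricted to degrees. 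Concretely, there is the Happel functor $\mod B\hookrightarrow \mod \hat A\simeq$ (stable) $\Db(A)$. However, for computing $\Ext$ into $B$ it is cleaner to work directly with $\mod B$.

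\textbf{Plan via induction on $m$.} The algebra $B=A^{(m)}$ is a one-point-like triangular extension of $A^{(m-1)}$ by the block $A$, with bimodule $DA$. An indecomposable $B$-module $M$ restricts to a triple $(M',M_m,\phi)$ with $M'$ an $A^{(m-1)}$-module. The following facts are standard and will be used:
\begin{itemize}
\item The projective-injective $B$-modules are exactly the $P_k$ for $k<m$, namely the injectives $D(A)$-columns of the previous block.
\item $\gldim A^{(m)}\le 2m+1$.
\item A minimal projective resolution of an indecomposable $B$-module alternates between blocks. Indeed, $\Omega_B(M)$ for $M$ not in the last block is essentially obtained by shifting one block down via $\nu^{-1}$ and taking syzygy over $A$.
\end{itemize}

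From these one computes $\pdim$ explicitly. Indecomposable $B$-modules are of two kinds: the projective-injective ones, and the modules of the form $\nu^{-k}\Omega^{-?}$ of an $A$-module, placed in block $k$. Every non-projective indecomposable $M$ lies, up to projective-injective summands of its syzygies, in a single block $k$ as an $A$-module $X$. Then:
\begin{itemize}
\item $\pdim M=2k$ if $X$ is projective over $A$.
\item $\pdim M=2k+1$ otherwise.
\end{itemize}
The syzygies are $\Omega^{2j}M\leftrightarrow \tau^{j}$-type modules in block $k-j$.

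\textbf{Ext vanishing.} For the Ext vanishing I would compute
$$\Ext^i_B(M,B)=\Ext^i_B(M,P_m)\oplus \bigoplus_{k<m}\Ext^i_B(M,P_k).$$
The summands with $k<m$ vanish for $i\ge1$, since each such $P_k$ is injective. So only $\Ext^i_B(M,P_m)$ with $P_m=A$ in the last block matters. Now $P_m$ is supported only in block $m$, while $\Omega^iM$ for $i\ge 1$ lives in blocks $<k$ apart from projective-injectives. Hence
$$\Ext^i_B(M,P_m)=\Hom(\Omega^iM,P_m)/(\text{maps factoring through projectives})=0$$
as long as $\Omega^{i}M$ has no non-projective part meeting block $m$. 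That fails only when $k=m$ and $i\le1$, which is the top degree or below.

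\textbf{Main obstacle.} The main obstacle is making the syzygy computation rigorous, in particular showing that minimal syzygies move strictly down blocks with exactly the stated lengths. I would do this via Happel's embedding $\mod B\subset \mod\hat A$ and $\underline{\mod}\,\hat A\simeq\Db(A)$, where $\Omega^{-1}=[1]$ and block $k$ corresponds to objects $X[k]$ and $\nu^{-1}$-twists. There the hereditary property makes the computation a bookkeeping of stalk complexes.
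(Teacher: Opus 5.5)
Your reduction to right modules via $(A^{(m)})^{\op}\cong(A^{\op})^{(m)}$ matches the paper, and you are right that the projective-injective summands $P_k$, $k<m$, contribute nothing to $\Ext^{\geq1}_B(M,B)$. The Ext-vanishing step itself, however, fails.

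\textbf{The syzygies move the other way.} With your conventions ($P_k$ supported in blocks $k,k+1$, and $P_m=A$ supported only in block $m$), the top of $P_k$ lies in block $k$: the tops of $P_0,\dots,P_m$ occupy distinct blocks, and $P_m$ already occupies block $m$. So minimal syzygies are pushed \emph{towards} block $m$, into the support of the non-injective projective $P_m$, not into blocks $<k$.

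\textbf{A concrete example refutes the structural claims.} Take $A=K(1\to2)$. Then $A^{(1)}\cong KA_4/J^3$ for the quiver $1\to2\to3\to4$, with blocks $\{1,2\}$ and $\{3,4\}$. One finds $\Omega S_1=\begin{smallmatrix}2\\3\end{smallmatrix}$, which meets the block $\{3,4\}$ of the non-injective projectives $P_3,P_4$, and $\Omega^2S_1=S_4=P_4$, which lies in it. The module $\begin{smallmatrix}2\\3\end{smallmatrix}$ is indecomposable, non-projective and supported in both blocks; it is of type (iv) in Lemma \ref{C-indec}. Also $\pdim_B S_2=1$ and $\pdim_B S_1=2$. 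Since $S_2$ is $A$-projective and $S_1$ is not, these have the opposite parities to what your formula predicts.

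\textbf{The Ext formula is wrong.} $\Ext^i_B(M,P_m)$ is $\Hom(\Omega^iM,P_m)$ modulo the maps that extend along $\Omega^iM\hookrightarrow P_{i-1}$, not modulo all maps factoring through projectives. With your formula, $\Ext^n_B(M,B)$ would vanish for $n=\pdim M$, because $\Omega^nM$ is projective; but it never vanishes when $n\geq1$.

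\textbf{The argument proves too much.} If your support argument worked, it would give $\Ext^i_B(M,B)=0$ for all $i\geq1$ whenever $M$ lies outside block $m$. That would force every such module to be projective, which $S_1$ above contradicts.

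\textbf{The missing idea: hereditarity must enter the computation of $\RHom_B(M,B)$.} The ingredients of your Ext step (projective-injectivity of the $P_k$, block-support of syzygies) make sense for the replication of any algebra. Yet the paper's example shows the statement fails in general: $1\to2\to3$ with $ab=0$ is bispherical, but its first replication is not spherical. The paper obtains the needed input as follows.
\begin{enumerate}
\item Lemma \ref{2times2} reduces to modules over a $2\times2$ block.
\item Proposition \ref{Ext} gives a triangle computing $\RHom_C(Z,C)$ over a triangular matrix algebra.
\item Corollary \ref{End}, applied with the quasi-isomorphism $A\to\REnd_{A^{(m-1)}}(N)$ for $N=\begin{pmatrix}0&\cdots&0&DA\end{pmatrix}$, turns $\RHom_{A^{(m)}}(Z,A^{(m)})$ for $Z$ in the extreme block into $\RHom_{A^{(m-1)}}(Y\lotimes_AN,A^{(m-1)})[-1]$. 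Here $Y\lotimes_A DA$ is a shifted indecomposable module precisely because $A$ is hereditary.
\item Two-block modules are handled by a similar reduction to $A^{(m-2)}$.
\item The induction carries a strengthened hypothesis: $\RHom$ of an indecomposable module in the extreme block is concentrated in a single degree.
\end{enumerate}
Your ``main obstacle'' paragraph defers exactly this content to an unexecuted computation in $\underline{\mod}\,\hat A\simeq\Db(\mod A)$, where you would also have to compare $\Ext_B$ with $\Ext_{\hat A}$. As it stands, the proposal does not establish $\Ext^i_B(M,B)=0$ for $1\le i<\pdim M$.
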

In particular, we get infinitely many examples of spherical algebras of arbitrarily high global dimension from a given hereditary algebra.
This naturally motivates the following problem:

\begin{problem}
Classify the spherical finite dimensional algebras.
\end{problem}
Of course, this is equivalent to classifying the cospherical finite dimensional algebras because $A$ is spherical if and only if $A^{op}$ is cospherical.

We give some partial progress to this problem in the following.
Our general result gives a restriction for a finite dimensional algebra to be (co)spherical in terms of the existence of a natural split torsion pair.
Define the subcategories by
\[
\begin{aligned}
\T&:=\{ M\in\mod A\mid \Hom_A(M,A)=0\}, \\
\F&:=\{ M\in\mod A\mid M \text{ is a submodule of a projective module}\}.
\end{aligned}
\]
Our characterization of cospherical algebras is as follows.
\begin{theorem}[{\cref{thm::characterisationcospherical}}]\label{intro:char}
Let $A$ be a finite dimensional algebra of finite global dimension.
Then the following are equivalent.
   \begin{enumerate}[label = (\roman*)]
        \item A is cospherical.
        \item The pair $(\T, \F)$ is a split torsion pair.
        %\item Every indecomposable module $M \in \mod A$ of injective dimension $\idim M \geq 2$ satsifies $\Ext^1_A(DA,M)=0$.
    \end{enumerate}
    %Moreover, in this case any module in in $\F$ is either projetcive or $(n-1)$-torsion-free for $n:=\idim\tau M\geq2$.
\end{theorem}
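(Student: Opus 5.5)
The proof rests on a translation of both conditions, for a single indecomposable module, into statements about $\tau$ of that module. First I would record that $\T$ is a torsion class in any case, since it is closed under quotients and extensions. Next, $\F$ is exactly the class of torsionless modules, i.e.\ those cogenerated by $A$, so $\Hom_A(\T,\F)=0$. It follows that $(\T,\F)$ is a split torsion pair if and only if every indecomposable module lies in $\T\cup\F$. Indeed, then $\F=\T^{\perp}$: an indecomposable summand of an object of $\T^\perp$ cannot lie in $\T$, so it lies in $\F$.

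For an indecomposable non-projective $X$ put $Y=\tau X$, so that $X=\tau^{-1}Y$. I would use two standard facts.
\begin{enumerate}[label=(\alph*)]
\item $\idim Y\le 1$ if and only if $\Hom_A(\tau^{-1}Y,A)=0$, that is, $X\in\T$.
\item $X$ is torsionless if and only if $\Ext^1_{A^{\op}}(\Tr X,A)=0$. Applying $D$ and using $\tau X=D\Tr X$, this says $\Ext^1_A(DA,\tau X)=0$. Hence $X\in\F$ if and only if $\Ext^1_A(DA,Y)=0$.
\end{enumerate}
Since $\tau$ is a bijection from indecomposable non-projectives to indecomposable non-injectives, and projectives lie in $\F$, condition (ii) becomes the following statement $(\ast)$: every indecomposable non-injective $Y$ satisfies $\idim Y\le 1$ or $\Ext^1_A(DA,Y)=0$. (If $\idim Y\le1$ then $X\in\T$; otherwise $X\in\F$.)

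For (i)$\Rightarrow$(ii), let $Y$ be cospherical of injective dimension $n$. If $n\ge2$, the vanishing for $i=1$ gives $\Ext^1_A(DA,Y)=0$, so $(\ast)$ holds.

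For (ii)$\Rightarrow$(i), let $Y$ be indecomposable with $\idim Y=n\ge2$, and take a minimal injective coresolution. By dimension shifting, $\Ext^i_A(DA,Y)\cong\Ext^1_A(DA,\Omega^{-(i-1)}Y)$ for $2\le i<n$, because $\Ext^{\ge1}_A(DA,I)=0$ for $I$ injective. By $(\ast)$, every indecomposable summand $Z$ of $\Omega^{-(i-1)}Y$ with $\idim Z\ge2$ contributes nothing. Summands with $\idim Z=0$ are excluded by minimality, so it remains to rule out summands with $\idim Z=1$ and $\Ext^1_A(DA,Z)\neq0$.

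\textbf{Main obstacle.} Ruling out these summands is where I expect the work to be. My first attempt would be an induction on $i$, using the short exact sequence
\[
0\to\Omega^{-(i-1)}Y\to I^{i-1}\to\Omega^{-i}Y\to0 .
\]
Such a $Z$ satisfies $\Hom_A(\tau^{-1}Z,A)=0$. I would try to produce from $Z$ a non-split map showing that some other indecomposable, built from $\tau^{-1}$ of a neighbouring cosyzygy, lies in neither $\T$ nor $\F$, contradicting (ii). Failing that, I would apply $\Hom_A(DA,-)$ to the whole minimal coresolution. This gives a complex of projectives $P^0\to\cdots\to P^n$, whose intermediate cohomology is exactly the groups $\Ext^i_A(DA,Y)$. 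I would then try to show, using (ii) for the modules $\tau^{-1}$ of the relevant cosyzygies and finiteness of global dimension, that a nonzero intermediate cohomology group forces an indecomposable outside $\T\cup\F$.
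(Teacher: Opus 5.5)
Your translation of (ii) into the condition $(\ast)$ via (a) and (b) is correct, and so is your proof of (i)$\Rightarrow$(ii). Both agree with the paper: this is its Proposition~\ref{prop::cosphericalcharacterisation} together with the equivalence of (ii) and (iii) in Theorem~\ref{thm::characterisationcospherical}.

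The direction (ii)$\Rightarrow$(i), however, is not proved. The step you call the main obstacle is the entire content of this implication. The two strategies you list are only intentions: you never say which map or which indecomposable module would witness a failure of (ii). A side remark: injective summands of $\Omega^{-(i-1)}Y$ are not excluded by minimality of the coresolution, since cosyzygies in a minimal injective coresolution can have injective summands. This is harmless, because such summands contribute nothing to $\Ext^1_A(DA,-)$.

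The missing idea is the dual of \cite[Proposition 2.43]{AB}. If $\Ext^1_A(DA,Y)=0$, then $\Omega^{-1}$ induces an isomorphism $\overline{\End}_A(Y)\cong\overline{\End}_A(\Omega^{-1}Y)$ of endomorphism rings modulo maps factoring through injectives.
\begin{itemize}
\item Surjectivity: for $g\colon\Omega^{-1}Y\to\Omega^{-1}Y$, the composite $I(Y)\to\Omega^{-1}Y\xrightarrow{g}\Omega^{-1}Y$ lifts to a map $I(Y)\to I(Y)$, because $\Ext^1_A(I(Y),Y)=0$. Restricting this lift to $Y$ gives a preimage.
\item Injectivity: this uses the same vanishing, in the form that every map from an injective module to $\Omega^{-1}Y$ factors through $I(Y)\to\Omega^{-1}Y$.
\end{itemize}
For $Y$ indecomposable and non-injective the left side is local. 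Hence $\Omega^{-1}Y\cong N\oplus I$ with $N$ indecomposable, $I$ injective, and $\idim N=\idim Y-1$.

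Now induct on $n=\idim Y$. If $n\geq3$, then $\idim N\geq2$, so $N$ is cospherical by induction. Therefore $\Ext^i_A(DA,Y)\cong\Ext^{i-1}_A(DA,N)=0$ for $2\leq i\leq n-1$, while $\Ext^1_A(DA,Y)=0$ is exactly $(\ast)$.

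In particular, the summands $Z$ with $\idim Z=1$ and $\Ext^1_A(DA,Z)\neq0$ that worried you never occur. For $i<n$, the only non-injective summand of $\Omega^{-(i-1)}Y$ has injective dimension $n-i+1\geq2$. This is exactly how the paper argues, via Corollary~\ref{coro::Ausladnerlemma} and the final part of the proof of Theorem~\ref{thm::characterisationcospherical}.
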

We refer to Theorem \ref{thm::characterisationcospherical} for a further equivalent condition and to Proposition \ref{prop::cosphericalcharacterisation} for more information on the torsion pair.

Now, the above splitting torsion pair gives a restriction on the shape of the Auslander--Reiten quiver. This leads to the following result.
Recall that a {\it cycle} in $\mod A$ is a sequence of indecomposable modules $(X_0,X_1,\cdots,X_n)$ such that $\rad(X_{i-1},X_{i})\neq0$ for $1\leq i\leq n$ and $X_0\simeq X_n$. An algebra $A$ is {\it directed} if $\mod A$ has no cycles.
  
\begin{theorem}[{\cref{theorem:directed}}]\label{theorem:directedIntro}
Each representation-finite (co)spherical algebra is directed.
In particular, its Gabriel quiver is acyclic.
%Let $A$ be a representation-finite (co)spherical algebra. Then the category $\mod A$ is directed.
\end{theorem}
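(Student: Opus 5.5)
By the duality $A \leftrightarrow A^{\op}$ (which turns $\mod A$ into $\mod A^{\op}$, reverses cycles and swaps spherical with cospherical), it suffices to treat a representation-finite cospherical algebra $A$. The plan is to use the split torsion pair $(\T,\F)$ from Theorem~\ref{intro:char} together with an induction on global dimension, or on the number of indecomposables.

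\textbf{Step 1: structure of the two halves.} Every indecomposable $X$ lies in $\T$ or in $\F$. Because the pair is split, $\Hom_A(\F,\T)$ may be nonzero, but $\Hom_A(\T,\F)=0$ by definition. Hence a cycle that enters $\F$ can never return to $\T$. So any cycle lies entirely in $\T$ or entirely in $\F$, and it suffices to rule out cycles inside each half separately.

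\textbf{Step 2: cycles inside $\F$.} Modules in $\F$ are submodules of projectives. I would compare $\F$ with the torsionless modules of an algebra of smaller global dimension, or rather use the fact that $\F$ is closed under submodules. The key claim is that for $X\in\F$ non-projective, the module $\tau^{-1}$-related modules in $\F$ stay in $\F$. More precisely, I would use the spherical (cospherical) Ext-vanishing to show that the function $X\mapsto \idim X$ is monotone along irreducible maps within $\F$. The intuition is that a cospherical module $M$ with $\idim M=n$ has $\Ext^i(DA,M)=0$ for $1\le i<n$. Combined with AR-theory, this should give $\idim \tau^{-1}X < \idim X$, or a comparable strict inequality, so that a suitable homological invariant strictly decreases along cycles in $\F$ modulo projectives. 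A cycle would then have to consist of modules of the same invariant. I would handle that remaining case by showing that such modules are all of injective dimension $0$ or $1$ and arguing as for hereditary-like pieces.

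\textbf{Step 3: cycles inside $\T$.} Since $\Hom_A(M,A)=0$ for $M\in\T$, the category $\T$ contains no projectives, so $\tau$ is defined on all of it. By cosphericity and Auslander--Reiten duality, $\Hom_A(M,A)=0$ translates into $\Ext^1_A(\tau^{-}\!\cdot)$-type conditions. I would try to show that $\tau$ maps indecomposables of $\T$ to modules of strictly smaller injective dimension, or into $\F$. Then a cycle in $\T$ could again be contradicted by a decreasing invariant, using the lemma that in a representation-finite algebra every cycle lies in a component on which $\tau$ acts. Alternatively, one could use the standard fact that in a representation-finite algebra a cycle yields an indecomposable $X$ with $\Hom(X,\tau X)\neq 0$ up to chains, which the vanishing conditions forbid.

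\textbf{Step 4: acyclic quiver.} The final claim follows since an oriented cycle in the Gabriel quiver gives a cycle of indecomposable projectives.

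I expect Step~2/3, namely finding the right strictly monotone invariant along irreducible maps, to be the main obstacle. As a fallback I would use representation-finiteness to pass to the Auslander algebra and argue via Theorem~\ref{Yoneda-AG}.
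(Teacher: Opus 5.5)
There is a genuine gap. Steps 2 and 3 are where the whole theorem lives, and they are not arguments. Worse, the mechanism you propose cannot work: no homological invariant such as $\idim$ is strictly monotone along $\tau$ or along irreducible maps, even for hereditary algebras.

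Take $A$ hereditary of linearly oriented type $A_4$. Then $\F=\add A$, and $\T$ consists of all indecomposable non-projectives. The $\tau$-orbit of the simple projective consists of the four simple modules. The two middle ones, $S$ and $\tau S$, are neither projective nor injective, so both lie in $\T$ and both have injective dimension $1$. So $\tau$ neither lowers $\idim$ on $\T$ nor pushes $S$ into $\F$. Along the mesh $\tau S\to E\to S$ the injective dimension is also constantly $1$, since $E$ is neither projective nor injective.

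Your claim that the leftover modules have $\idim\le 1$ is unsupported. The fallback via \cref{Yoneda-AG} does not apply: it needs $A$ bispherical, while you only assume cospherical, and Auslander--Gorensteinness of $Y(A)$ says nothing directly about cycles in $\mod A$. The ``$\Hom(X,\tau X)\neq 0$ up to chains'' idea is not a precise statement, and you do not say which vanishing condition would forbid it.

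What is missing is a combinatorial argument on the AR quiver with two ingredients.

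\emph{No cyclic $\tau$-orbits} (\cref{theorem:tau-orbits}).
\begin{itemize}
\item If such an orbit lies in $\T$, take a path from an indecomposable projective to $M$. Using the meshes, $\Hom_A(\T,\F)=0$ forces every module on the path to have a cyclic $\tau$-orbit inside $\T$. This eventually produces a nonzero map from $\T$ to a projective, a contradiction.
\item If the orbit lies in $\F$, use that injectives in $\F$ are projective (\cref{lemma:proj-inj}) and follow paths into the injective envelope. This shows that $\Omega^{-1}M$ again has a summand with a cyclic $\tau$-orbit, contradicting $\idim M<\infty$.
\end{itemize}

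\emph{The Bautista--Smal\o{} theorem} (\cref{theorem:BS}): every cycle in the AR quiver has some $C_i\cong\tau C_{i+2}$.

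With these, let $C\subseteq\T$ be a cycle.
\begin{itemize}
\item $C$ contains no projectives, so $\tau C$ is again a cycle, and hence lies wholly in $\T$ or wholly in $\F$.
\item Since $\tau$-orbits are not cyclic and $A$ is representation-finite, every orbit ends in a projective. So there is a minimal $m\geq 1$ with $\tau^m C\subseteq\F$.
\item Applying Bautista--Smal\o{} to $\tau^mC$ gives $\tau^{m-1}C_i\cong\tau^m C_{i+2}\in\F$, contradicting minimality.
\end{itemize}
The case $C\subseteq\F$ is dual with $\tau^{-1}$, after replacing each projective-injective $C_j$ on the cycle by an indecomposable summand of $\rad C_j/\soc C_j$.

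Your reduction to the cospherical case and Step 4 are fine. Step 1 reaches the right conclusion but states the direction backwards: paths can pass from $\F$ to $\T$, never from $\T$ to $\F$.
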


This leads to a full classification for Nakayama algebras:
\begin{theorem}[{\cref{coro:LinearNakayama}}]
Let $A$ be a connected Nakayama algebra.
Then the following are equivalent:
\begin{enumerate}[label = (\roman*)]
    \item $A$ is cospherical.
    \item $A$ has a linear quiver and weakly decreasing Kupisch series.
\end{enumerate}
In particular, (co)spherical Nakayama algebras with $n \geq 2$ simple modules are enumerated by $2^{n-2}$.
\end{theorem}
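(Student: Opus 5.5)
By \cref{theorem:directedIntro} and \cref{intro:char}, the proof splits into a reduction to linear quivers followed by a combinatorial check of the torsion pair $(\T,\F)$. \emph{Reduction to linear quivers.} A Nakayama algebra is representation-finite. A cospherical algebra has finite global dimension, so \cref{theorem:directedIntro} applies and shows that a cospherical Nakayama algebra is directed; in particular its Gabriel quiver is acyclic. A connected Nakayama algebra with acyclic quiver has a linear quiver. So in (i)$\Rightarrow$(ii) we may assume $A$ is linear. For (ii)$\Rightarrow$(i), linear Nakayama algebras have finite global dimension, so \cref{intro:char} is available in both directions.

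\emph{Setup.} I label vertices $1,\dots,n$ so that $P_n$ is simple and $\rad P_i = P_{i+1}/\rad^{c_i-1}P_{i+1}$. Then the Kupisch series $(c_1,\dots,c_n)$ satisfies $c_n=1$, $c_i\ge 2$ for $i<n$ (by connectedness) and $c_i\le c_{i+1}+1$. Every indecomposable module is uniserial, of the form $M(l,r)=P_l/\rad^rP_l$ with $1\le r\le c_l$.

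The pair $(\T,\F)$ is always a torsion pair here. Indeed $\Hom_A(\T,\F)=0$, since any map from $\T$ into a submodule of a projective composes to a map into $A$. Moreover, $X\notin\T$ forces a nonzero image of $X$ inside $\F$. Hence, checking the torsion-pair axioms directly, or via \cref{prop::cosphericalcharacterisation}, the pair is split precisely when every indecomposable $X$ with $\Hom_A(X,A)\ne0$ lies in $\F$. The submodules of projectives are the modules $\rad^mP_j\cong M(j+m,\,c_j-m)$. So $M(l,r)\in\F$ if and only if $r=c_j-(l-j)$ for some $j\le l$. Similarly, $\Hom_A(M(l,r),A)\neq0$ if and only if some quotient $M(l,s)$ with $s\le r$ lies in $\F$.

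\emph{(ii)$\Rightarrow$(i).} Assume $c_1\ge c_2\ge\cdots\ge c_n$, so each step $c_{j-1}-c_j$ is $0$ or $1$. As $j$ decreases from $l$, the value $c_j-(l-j)$ changes by $0$ or $-1$ at each step. Hence the lengths of modules in $\F$ with top $S_l$ form an integer interval $[m_l,c_l]$. If $\Hom_A(M(l,r),A)\ne0$, then some $s\le r$ lies in $[m_l,c_l]$. Since $r\le c_l$, this gives $r\in[m_l,c_l]$, so $M(l,r)\in\F$. The pair is therefore split, and \cref{intro:char} shows that $A$ is cospherical.

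\emph{(i)$\Rightarrow$(ii).} Suppose the series is not weakly decreasing, and pick $i$ with $c_{i+1}=c_i+1$ (the constraint $c_i\le c_{i+1}+1$ allows only this kind of increase). I want an $X=M(l,r)$ with $\Hom_A(X,A)\ne 0$ and $X\notin\F$. The natural candidate comes from the gap that the jump creates in the set of values $c_j-(l-j)$: take $l=i+1$ and $r=c_{i+1}$, so that $X=P_{i+1}$ itself? That is projective, so it is in $\F$. Instead I would look at a larger $l$, where the values coming from $j\le i$ and from $j\ge i+1$ skip a length. Any $r$ in that gap lying above some attained value gives a non-split module.

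\emph{Main obstacle.} Pinning down this witness in general is the step I expect to be hardest, and I would first test it on small Kupisch series such as $(2,3,2,1)$. By contrast, the counting is easy. A weakly decreasing series with $c_n=1$, $c_{n-1}=2$ and steps in $\{0,1\}$ is determined by the $n-2$ free steps $c_{j}-c_{j+1}$ for $1\le j\le n-2$. This gives $2^{n-2}$ cospherical Nakayama algebras, and the same count for spherical ones by passing to $A^{\op}$.
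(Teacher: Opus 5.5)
\textbf{The gap is in (i)$\Rightarrow$(ii).} Your reduction to linear quivers is fine, and so are the splitting criterion, the interval argument for (ii)$\Rightarrow$(i) and the count $2^{n-2}$. The interval argument is a more explicit substitute for the paper's appeal to the shape of the AR quiver. Your side claim that $(\T,\F)$ is always a torsion pair is not justified by the two facts you list, but you do not need it. The real problem is that in (i)$\Rightarrow$(ii) you never exhibit an indecomposable outside $\T\cup\F$, so this implication is unproved. Two things went wrong there.
\begin{enumerate}
\item Your parenthetical is false. The constraint $c_i\le c_{i+1}+1$ bounds \emph{decreases}, not increases; for example $(2,4,3,2,1)$ is a valid Kupisch series. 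So you should pick any $i$ with $c_{i+1}>c_i$.
\item You abandoned $l=i+1$ too early. The skipped length is already there, one step below the top.
\end{enumerate}

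\textbf{The witness.} Write $f_l(j)=c_j-(l-j)$. The constraint $c_{j-1}\le c_j+1$ gives $f_l(j-1)\le f_l(j)$ for every $l$, without any assumption on the series. For $l=i+1$ this means every $j\le i$ yields a length at most $f_{i+1}(i)=c_i-1\le c_{i+1}-2$, while $j=i+1$ yields $c_{i+1}$. Hence the length $c_{i+1}-1$ is never attained, so $X=M(i+1,c_{i+1}-1)=P_{i+1}/\soc P_{i+1}$ is not in $\F$. On the other hand, its quotient $M(i+1,c_i-1)=\rad P_i$ is nonzero because $c_i\ge 2$, and it lies in $\F$. Therefore $\Hom_A(X,A)\neq0$, so $X\notin\T$, and your criterion together with \cref{thm::characterisationcospherical} finishes the argument.

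This $X$ is exactly the paper's witness. It is a non-projective injective module whose top $S_{i+1}$ is not the simple injective $S_1$. The paper reaches the contradiction through condition (iii) of \cref{thm::characterisationcospherical}: $\idim\tau X\ge 2$, while the Auslander--Reiten sequence ending in $X$ gives $\Ext^1_A(X,\tau X)\neq 0$.
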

\begin{theorem}[{\cref{bisphericalNakayamaalgebras}}]
Let $A$ be a connected Nakayama algebra. The following are equivalent.
\begin{enumerate}[label = (\roman*)]
\item The Auslander--Yoneda algebra of $A$ is Auslander--Gorenstein.
\item $A$ is bispherical.
\item $A\simeq kQ/\rad ^{\ell}kQ$ for some $\ell \geq 2$, where $Q$ is the linearly oriented quiver of type $A$.
\end{enumerate}
\end{theorem}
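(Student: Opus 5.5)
The plan is to prove the three implications (i)$\Rightarrow$(ii)$\Rightarrow$(iii)$\Rightarrow$(i). The implication (i)$\Rightarrow$(ii) is immediate from Theorem \ref{Yoneda-AG}(1), since a Nakayama algebra is representation-finite.

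\textbf{Step 1: reduce to a short list of candidates.} Assume $A$ is bispherical. Applying the cospherical classification \cref{coro:LinearNakayama} to $A$ shows that $A$ has a linear quiver and a weakly decreasing Kupisch series $(c_1,\dots,c_n)$, with $c_n=1$. Since $A$ is also spherical, $A^{op}$ is cospherical. Now $A^{op}$ is again a linear Nakayama algebra, and its Kupisch series in the natural ordering is given by the lengths of the indecomposable injective $A$-modules. Applying \cref{coro:LinearNakayama} to $A^{op}$ therefore says that this sequence of injective lengths is also weakly decreasing.

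\textbf{Step 2: the two monotonicity conditions force $A\simeq kQ/\rad^{\ell}kQ$.} Fix the convention that $P_i$ has composition factors $S_i,S_{i+1},\dots,S_{i+c_i-1}$. Then the length of $I_j$ is $d_j=\#\{i\le j : i+c_i-1\ge j\}$.
\begin{itemize}
\item Because $c$ is weakly decreasing, the condition $i+c_i-1\ge j$ is an interval condition in $i$, so $d_j$ has a simple closed form.
\item Reversing the ordering for the opposite algebra, the requirement is that $d$ be weakly increasing along the quiver from $1$ to $n$.
\item Suppose $c$ is not of the truncated form $\min(\ell,n-i+1)$. Then there is a first index $i$ with $c_i<c_{i-1}$ and $c_i<n-i+1$.
\item At that position $I_{i+c_i}$ loses a composition factor compared with $I_{i+c_i-1}$, which violates the monotonicity of $d$.
\end{itemize}
Hence $c_i=\min(\ell,n-i+1)$ with $\ell=c_1$, which is exactly $kQ/\rad^{\ell}kQ$. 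If $\ell=1$ and $n\ge2$ the algebra is disconnected, so $\ell\ge2$; the degenerate case $n=1$ is absorbed by allowing $\ell\ge2$ freely.

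This combinatorial translation is the main obstacle. The difficulty is getting the indexing and orientation conventions right so that the "weakly decreasing" condition for $A^{op}$ is correctly expressed in terms of $c$. I would verify it first on small examples such as $(3,2,2,1)$ for $n=4$, which should fail.

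\textbf{Step 3: (iii)$\Rightarrow$(i).} The algebra $kQ/\rad^{\ell}kQ$ has Kupisch series $(\ell,\dots,\ell,\ell-1,\dots,1)$, which is weakly decreasing, so \cref{coro:LinearNakayama} shows it is cospherical. It is isomorphic to its opposite algebra by reversing the quiver, so it is also spherical. Hence it is bispherical, and Theorem \ref{Yoneda-AG}(1) gives that $Y(A)$ is Auslander--Gorenstein.
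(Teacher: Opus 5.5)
Your proposal is correct and follows the route the paper intends; the paper gives no separate proof of this corollary. (i)$\Leftrightarrow$(ii) comes from Theorem \ref{Yoneda-AG}(1), since Nakayama algebras are representation-finite (finite global dimension of $A$ is tacitly assumed in (i), as in the paper), and (ii)$\Leftrightarrow$(iii) comes from applying \cref{coro:LinearNakayama} to both $A$ and $A^{\mathrm{op}}$. Your combinatorial step is also correct: a weakly decreasing Kupisch series whose injective lengths are weakly increasing along the quiver must be $c_i=\min(\ell,n-i+1)$, because at the first non-truncated drop $I_{i+c_i}$ is shorter than $I_{i+c_i-1}$. This fills in exactly the part the paper leaves implicit.
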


\cref{theorem:directedIntro} motivates the following question.

\begin{question}
Are there bispherical algebras whose Gabriel quiver contains a cycle?
\end{question}

{We now turn to the final application of our notion of spherical algebras.} 
In \cite[Question 3.17]{V}, motivated by the study of the Auslander regular property of rings in the setting of Iwasawa algebras
of $p$-adic Lie groups, Venjakob posed %the following question:
a question about the structure of modules over an Auslander regular ring $\La$ in a certain quotient category of $\mod\La$.
This can be stated as follows in the language of torsion pairs.
\begin{question}
Let $\La$ be an Auslander regular ring.
Is the torsion pair $(q(\E),\refl\La)$ always split?
\end{question}
We refer to \cref{section:7} for the precise definition of $(q(\E),\refl\La)$.
We note that the torsion pair $(q(\E),\refl\La)$ can be identified with the image of $(\T,\F)$ above in the quotient category (see \cref{lemma::T=E}).
Our characterization \cref{intro:char} then gives 
%We give 
the following positive answer in the cospherical case:
\begin{theorem}[{\cref{splitARcorollary}}]
Let $A$ be a finite dimensional cospherical Auslander regular algebra. Then $(q(\E),\refl A)$ is split.
\end{theorem}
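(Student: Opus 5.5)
The plan is to deduce the statement directly from \cref{intro:char} (that is, \cref{thm::characterisationcospherical}) via the identification in \cref{lemma::T=E}.

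First, I set up the quotient. For an Auslander regular algebra $A$, I take the Serre subcategory of modules of grade at least $2$; call it $\mathcal{S}$. Its quotient is $q\colon \mod A \to \mod A/\mathcal{S}$. Here $\E$ is the class of modules with $\Hom_A(-,A)=0$, i.e. grade at least one, and $\refl A$ is the class of reflexive modules, viewed in the quotient. By \cref{lemma::T=E}, the pair $(q(\E),\refl A)$ is the image under $q$ of $(\T,\F)$. Concretely, $q(\T)=q(\E)$. Also every object of $q(\F)$ is isomorphic in the quotient to a reflexive module, namely $M^{**}$ for $M$ torsionless, since $M\to M^{**}$ is injective with cokernel of grade $\ge 2$ by the Auslander condition.

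Second, I apply \cref{intro:char}. Since $A$ is cospherical (and has finite global dimension, being Auslander regular), $(\T,\F)$ is a split torsion pair in $\mod A$. Hence every $M\in\mod A$ decomposes as $M\simeq tM\oplus M/tM$ with $tM\in\T$ and $M/tM\in\F$.

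Third, I transport the splitting. Every object of the quotient category is of the form $q(M)$. Additive functors preserve direct sums, so $q(M)\simeq q(tM)\oplus q(M/tM)$. Here $q(tM)\in q(\E)$, and $q(M/tM)\simeq q((M/tM)^{**})$ lies in $\refl A$. By \cref{lemma::T=E}, this decomposition is the canonical torsion sequence of $(q(\E),\refl A)$ for $q(M)$, because torsion and torsionfree parts are unique up to isomorphism. Thus every canonical sequence splits, i.e. the pair is split.

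The only delicate point is checking that the image of the torsion sequence $0\to tM\to M\to M/tM\to 0$ really is the torsion sequence in the quotient. This is exactly the content of \cref{lemma::T=E}, together with the exactness of $q$. If that lemma is stated only on the level of classes, I would additionally verify Hom-vanishing $\Hom(q(\E),\refl A)=0$ in the quotient. That vanishing follows because morphisms out of $q(T)$ come from maps $T'\to N/N'$ with $T'\subseteq T$ of grade $\ge1$ and $N'$ of grade $\ge 2$. For reflexive $N$, the module $N/N'$ embeds into a module with no grade-one submodules, so such maps vanish.
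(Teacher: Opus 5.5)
Your proposal is correct and is essentially the paper's proof: it uses the split pair $(\T,\F)$ from Theorem~\ref{thm::characterisationcospherical}, the inclusion $q(\T)\subseteq q(\E)$, and $q(F)\cong q(F^{**})\in\refl A$ for $F\in\F$ via the Auslander--Bridger sequence. One imprecision: the paper defines $\E$ as the class of modules $\Ext^1_{A^\op}(X,A)$, not as the modules of grade $\geq 1$, so the equality $\T=\E$ requires $\grade\Ext^1_{A^\op}(X,A)\geq 1$ (the paper gets this from $A^\op$ being spherical via Lemma~\ref{lemma::sphericalcontrolsgradeofExt}, and it also follows from the Auslander condition), although your splitting argument only uses the unconditional inclusion $\T\subseteq\E$ from Lemma~\ref{lemma::T=E}.
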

This result, together with our classification of cospherical Nakayama algebras, motivated us to test the question by Venjakob for general Auslander regular algebras that are possibly non-cospherical. We found an explicit counterexample inside the class of Nakayama algebras, we refer to Proposition \ref{Venjakobcounterexample}.

\section{Basic definitions and properties}
We assume the reader is familiar with representation theory and homological algebra of finite dimensional algebras and refer for example to the textbooks \cite{ASS,Hap,Kr,SkoYam}. Unless otherwise stated, we work with finite dimensional $K$-algebras with $K$ a field and assume modules are finitely generated right modules.
\begin{definition} \cite{AB}
Let $A$ be a finite dimensional algebra and $M\in\mod A$.
\begin{enumerate}
\item We say that a module $M$ of finite projective dimension is {\it spherical} if $\Ext^i_A(M,A)=0$ unless $i\in\{0, \pdim M\}$.
\item We say that a module $M$ of finite injective dimension is {\it cospherical} if $\Ext^i_A(DA,M)=0$ unless $i\in\{0,\idim M\}$.
\end{enumerate}
\end{definition}

\begin{definition}
We call a finite dimensional algebra {\it spherical} (resp. {\it cospherical}) if every finitely generated indecomposable module is spherical (resp. cospherical). A {\it bispherical algebra} is a finite dimensional algebra which is both spherical and cospherical.
\end{definition}
In particular, each (co)spherical algebra has finite global dimension.

\begin{definition}
    Let $M$ be an $A$-module. 
    The \emph{grade} of $M$ is defined as
    \begin{equation*}
        \operatorname{grade} M := \operatorname{grade}_{A} M := \min \{ i \geq 0 \mid \Ext^i_A(M,A) \neq 0\}.
    \end{equation*}
    It is defined to be $\infty$ if no such $i$ exists. 
    The \emph{reduced grade} is the minimal $i \geq 1$ such that $\Ext^i_A(M,A) \neq 0$.

    The \emph{strong grade} of $M$ is the infimum over all grades of submodules of $M$.
    \begin{equation*}
        \sgrade M :=  \sgrade_{A} M := \inf\{\, \operatorname{grade} N \mid N\subseteq M\,\}.
    \end{equation*}
\end{definition}
We refer for example to \cite{HI} for more on the grade, reduced grade and strong grade of modules.

Note that a finite dimensional algebra of finite global dimension is spherical if and only if for every indecomposable module its reduced grade is maximal.

\begin{lemma}\label{lemma::sphericalcontrolsgradeofExt}
    Let $M$ be a spherical $A$-module. Then $\operatorname{grade} \Ext_A^i(M,A) \geq i$ for all $i \geq 0$.
    \begin{proof}
    Let $n:=\pdim M$. Since $M$ is spherical, we just have to prove that the left $A$-module $E:=\Ext_A^n(M,A)$ satisfies $\operatorname{grade} E\geq n$.
    Let 
    $$0 \rightarrow P_n \rightarrow \cdots \rightarrow P_0 \rightarrow M \rightarrow 0$$ be a minimal projective resolution of $M$ and apply the functor $\Hom_A(-,A)$ to it to get the exact sequence of left $A$-modules (here we use that $M$ is spherical to get exactness):
    $$0 \rightarrow M^{*} \rightarrow P_0^{*} \rightarrow \cdots \rightarrow P_n^{*} \rightarrow E \rightarrow 0.$$
    We now compute $\Ext_{A^{op}}^j(E,A)$ by applying $\Hom_{A^{op}}(-,A)$ to the complex above to get the complex
    $$0 \rightarrow P_n^{**} \rightarrow \cdots \rightarrow P_0^{**} \rightarrow M^{**}\rightarrow0$$
    which computes $\Ext^j_{A^{op}}(E,A)$.
    Now we use that the $P_i$ are projective and thus reflexive: $P_i \cong P_i^{**}$ and the complex is isomorphic to 
    $$0 \rightarrow P_n \rightarrow \cdots \rightarrow P_0 \rightarrow M^{**}\rightarrow0,$$ the original complex up to $P_0^{**}$, which is exact.
    Thus $\Ext_{A^{op}}^j(E,A)=0$ for $0 \leq j<n$ and thus $\operatorname{grade} E \geq n$.
    \end{proof}
\end{lemma}

In Theorem \ref{Yoneda-AG} we show that a representation-finite algebra is bispherical if and only if its Auslander--Yoneda algebra is Auslander--Gorenstein. 
On the other hand, Lemma \ref{lemma::sphericalcontrolsgradeofExt} implies that a spherical algebra is already right quasi-Auslander--Gorenstein, see Proposition \ref{qAG} below.

\begin{definition}
An algebra $A$ is called \emph{right quasi-Auslander--Gorenstein} if $\idim P_i \leq i+1$ for all $i \geq 0$ when 
$$\cdots \rightarrow P_{n+1} \rightarrow P_n \rightarrow \cdots \rightarrow P_0 \rightarrow D(A) \rightarrow 0$$
is a minimal projective resolution of $D(A)$.
Dually, one defines left quasi-Auslander--Gorenstein algebras, which is also equivalent to $A^{op}$ being right quasi-Auslander--Gorenstein. An algebra is called \emph{quasi-Auslander--Gorenstein} if it is left and right quasi-Auslander--Gorenstein. 
\end{definition}

Notice that in contrast to being Auslander--Gorenstein, being right quasi-Auslander--Gorenstein is not left-right symmetric.

Right quasi-Auslander--Gorenstein algebras have been used by Auslander and Reiten in \cite{AR2} to characterize algebras with extension-closed syzygy categories and they have several equivalent characterizations and good properties, see also for example \cite{Hu}. Another important application is that the spherical filtration theorem holds for all modules for right quasi-Auslander--Gorenstein algebras, see for example \cite{AB} and \cite[Theorem 2.7]{G}.

\begin{proposition}\label{qAG}
Let $A$ be a spherical algebra. Then $A$ is right quasi-Auslander--Gorenstein.
\end{proposition}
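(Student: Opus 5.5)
Our plan is to use the standard equivalent characterization of right quasi-Auslander--Gorenstein algebras in terms of grades of Ext-modules (Auslander--Reiten \cite{AR2}, see also \cite{Hu}): $A$ is right quasi-Auslander--Gorenstein if and only if for every finitely generated right $A$-module $M$ and every $i\geq 1$, every submodule of the left module $\Ext^i_A(M,A)$ has grade at least $i-1$. Equivalently, every left submodule $N\subseteq \Ext^i_A(M,A)$ satisfies $\Ext^j_{A^{op}}(N,A)=0$ for $j<i-1$. We will instead aim for the slightly simpler sufficient condition $\grade \Ext^i_A(M,A)\geq i$, and then get the statement about submodules from the dual characterization. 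Concretely, we will use the version stated in \cite{AR2}: $\idim P_i\leq i+1$ for the minimal projective resolution of $D(A)$ is equivalent to $\grade \Ext^{i+1}_{A^{op}}(X,A)\ge i$ type conditions. If this is awkward, we will fall back on the formulation via cosyzygies that we explain below.

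The key steps run as follows.

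\textbf{Step 1: reduction to indecomposables.} Since $\Ext^i_A(-,A)$ is additive and grade of a finite direct sum is the minimum of the grades, it suffices to verify the grade inequality for indecomposable $M$.

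\textbf{Step 2: the indecomposable case.} For indecomposable $M$, which is spherical by assumption, Lemma \ref{lemma::sphericalcontrolsgradeofExt} gives $\grade \Ext^i_A(M,A)\geq i$ for all $i$. Hence $\grade\Ext^i_A(M,A)\ge i$ holds for all $M\in\mod A$.

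\textbf{Step 3: from Ext-modules to submodules.} This is where a second ingredient is needed to pass from the Ext-modules to the condition on submodules. Because $A$ has finite global dimension, we will use the result of Auslander--Reiten/Huang \cite{Hu}: the condition $\grade \Ext^{i}_{A}(M,A)\geq i-1$ for all $M$ and $i\ge1$ is already equivalent to $\idim P_i\le i+1$ for the minimal projective resolution of $D(A)$. That is, the ``quasi'' version requires only the grade, not the strong grade, which mirrors the fact that the Auslander condition on flat dimensions is equivalent to $\sgrade\Ext^i\ge i$.

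\textbf{Main obstacle and fallback.} The main obstacle is precisely this citation: identifying the correct side (right versus left) and the correct index shift in the equivalence. If it is not directly available, we will prove it by hand. We dualize: $\idim P_i\le i+1$ is equivalent to $\Ext^{j}_A(X,P_i)=0$ for $j>i+1$ and all $X$. Using $P_i=\nu^{-1}$-type identifications $\Ext^j_A(X,P_i)\cong D\Tor$ together with the spectral sequence/grade argument of Auslander--Bridger, $\Ext^j_A(X,A)$ of grade $\ge j$ yields the vanishing via the standard dimension-shifting argument along the resolution of $D(A)$.
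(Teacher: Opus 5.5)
Your Steps 1 and 2 are exactly the paper's proof. The paper quotes \cite[Theorem 0.1]{AR2} in the form ``$A$ is right quasi-Auslander--Gorenstein if and only if $\grade\Ext^i_A(M,A)\geq i$ as $A^{\mathrm{op}}$-modules for all $M\in\mod A$ and all $i\geq 0$''. It then reduces to indecomposable $M$ by additivity and concludes with Lemma~\ref{lemma::sphericalcontrolsgradeofExt}. So the inequality $\grade\Ext^i_A(M,A)\geq i$ you obtain in Step 2 is not just a sufficient condition from which a statement about submodules still has to be extracted. It is the Auslander--Reiten criterion itself, and the proof is finished at that point.

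Step 3 should be deleted, because it misquotes the criterion by mixing its two equivalent forms. In the grade form the bound is $i$; the bound $i-1$ belongs to the strong-grade form $\sgrade\Ext^i_A(M,A)\geq i-1$, which is what your first paragraph states. The hybrid ``$\grade\Ext^i_A(M,A)\geq i-1$'' (and likewise ``$\grade\Ext^{i+1}_{A^{\mathrm{op}}}(X,A)\geq i$'') is not equivalent to quasi-Auslander--Gorenstein:
\begin{itemize}
\item It is vacuous for $i=1$.
\item It holds on both sides for every algebra of global dimension $2$, because there $\Ext^2_A(M,A)^*\cong\Ker(P_2\to P_1)=0$.
\item Yet take the algebra $B$ given by $1\to2\to3\leftarrow4$ with the path $1\to2\to3$ set to zero, which has global dimension $2$. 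On one side the simple module $S_3$ has $\Ext^1(S_3,B)$ isomorphic to the indecomposable projective module at vertex $4$ of the other side. That module has grade $0$, so the genuine criterion fails on that side.
\end{itemize}

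Your fallback sketch also does not replace the citation. Deducing $\idim P_i\leq i+1$ from a grade condition on all $\Ext$-modules is the substantive direction of Auslander--Reiten's theorem, not a routine dimension shift. So the argument rests on \cite[Theorem 0.1]{AR2}, just as the paper's does.
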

\begin{proof}
By \cite[Theorem 0.1]{AR2} $A$ is right quasi-Auslander--Gorenstein if and only if $\operatorname{grade} \Ext_A^i(M,A) \geq i$ as $A^\op$-modules for all $i \geq 0$ and all $A$-modules $M$. 
By additivity of $\Ext^i_A(-,A)$, it is enough to show $\operatorname{grade} \Ext_A^i(M,A) \geq i$ for all $i \geq 0$ for indecomposable modules $M$.
Since by assumption $A$ is spherical, we get that any indecomposable $A$-module $M$ is spherical and thus satisfies the condition by Lemma \ref{lemma::sphericalcontrolsgradeofExt}.
\end{proof}

\begin{corollary}
Let $A$ be a bispherical algebra. Then $A$ is quasi-Auslander--Gorenstein.
\end{corollary}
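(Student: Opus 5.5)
The plan is to combine Proposition \ref{qAG} with its dual. Quasi-Auslander--Gorenstein means both left and right quasi-Auslander--Gorenstein, and the left condition is by definition equivalent to $A^{\op}$ being right quasi-Auslander--Gorenstein. So it suffices to show that $A$ and $A^{\op}$ are both spherical, and then apply Proposition \ref{qAG} to each.

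For $A$ itself this is immediate, since a bispherical algebra is by definition spherical, and Proposition \ref{qAG} gives that $A$ is right quasi-Auslander--Gorenstein.

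For the left side we need that $A^{\op}$ is spherical, that is, every indecomposable left $A$-module is spherical. This is exactly the cospherical half of the bispherical assumption, read through the duality $D=\Hom_K(-,K)$. The duality $D\colon \mod A\to\mod A^{\op}$ sends indecomposables to indecomposables and $DA$ to $A_{A^{\op}}$. It also gives
\[
\Ext^i_A(DA,M)\cong\Ext^i_{A^{\op}}(DM,A) \quad\text{and}\quad \idim_A M=\pdim_{A^{\op}}DM .
\]
Hence $M$ is cospherical over $A$ if and only if $DM$ is spherical over $A^{\op}$. Since every indecomposable right $A$-module is cospherical, every indecomposable $A^{\op}$-module is spherical, so $A^{\op}$ is spherical. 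This is consistent with the introduction's remark that $A$ is spherical if and only if $A^{\op}$ is cospherical.

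Applying Proposition \ref{qAG} to $A^{\op}$ then shows that $A^{\op}$ is right quasi-Auslander--Gorenstein, i.e.\ $A$ is left quasi-Auslander--Gorenstein, which completes the argument. The only step needing any care is matching the left/right conventions in the definition of cospherical with the duality; beyond that the proof is formal.
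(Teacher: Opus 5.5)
Your proof is correct and follows the paper's argument. The paper states the corollary without a separate proof, as an immediate consequence of Proposition \ref{qAG} applied to $A$ (right condition) and, via the duality $D$ turning cospherical $A$-modules into spherical $A^{\op}$-modules, to $A^{\op}$ (left condition); it uses the same dualization again in Corollary \ref{cor::somelinearNakayamaalgebrasareqAG}.
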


From this, our examples of (bi)spherical algebras give two non-trivial classes of quasi-Auslander--Gorenstein-algebras, see Corollary \ref{cor::somelinearNakayamaalgebrasareqAG} and Corollary \ref{cor::replciatedalgebraqAG}. 

\section{The split torsion pair for (co)spherical algebras}
We consider the following two subcategories:
\[
\begin{aligned}
\T&:=\{ M\in\mod A\mid \Hom_A(M,A)=0\}, \\
\F&:=\{ M\in\mod A\mid M \text{ is a submodule of a projective module}\}.
\end{aligned}
\]
The main result of this section is that $(\T, \F)$ being a split torsion pair is a characterization of cosphericalness. %as the following Theorem shows.

\begin{theorem}\label{thm::characterisationcospherical}
    Let $A$ be a finite dimensional algebra of finite global dimension.
    Then the following are equivalent.
    \begin{enumerate}[label = (\roman*)]
        \item A is cospherical.
        \item The pair $(\T, \F)$ is a split torsion pair.
        \item Every indecomposable module $M \in \mod A$ of injective dimension $\idim M \geq 2$ satisfies $\Ext^1_A(DA,M)=0$.
    \end{enumerate}
    Moreover, in this case any indecomposable module $M \in \F$ is either projective or $(n-1)$-torsion-free for $n:=\idim\tau M\geq2$.
\end{theorem}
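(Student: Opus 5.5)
The plan is to translate all three conditions into statements about the Auslander--Reiten translate, using the Auslander--Reiten formula $\Ext^1_A(X,Y)\cong D\underline{\Hom}_A(\tau^{-1}Y,X)$. The key identity I aim for is: for every indecomposable non-projective $M$,
\[ \Ext^1_A(DA,\tau M)\cong D\underline{\Hom}_A(M,DA), \qquad\text{and}\qquad \underline{\Hom}_A(M,DA)=0 \iff M\in\F. \]
For the second equivalence, first suppose $M\subseteq P$ with $P$ projective. Then every map $M\to DA$ extends over $P$, since $DA$ is injective, so it factors through a projective. Conversely, suppose every map $M\to DA$ factors through a projective. Then the injective envelope $M\hookrightarrow I$, with $I\in\add DA$, factors as $M\to P\to I$, so $M\to P$ is a monomorphism and $M\in\F$.

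I will also use the standard fact that for indecomposable non-injective $N$ one has $\idim N\le1$ if and only if $\Hom_A(\tau^{-1}N,A)=0$. Writing $N=\tau M$, this says $\idim\tau M\le 1$ if and only if $M\in\T$.

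\emph{(ii)$\Leftrightarrow$(iii).} The condition $\Hom_A(\T,\F)=0$ is automatic: a nonzero map $T\to F$ composed with $F\hookrightarrow P$ gives a nonzero map $T\to A^m$. Moreover, $\T$ is closed under quotients and extensions, and $\F$ under submodules and extensions. So $(\T,\F)$ is a split torsion pair exactly when every indecomposable lies in $\T\cup\F$. Projective indecomposables lie in $\F$. For a non-projective indecomposable $M$, put $N=\tau M$, which runs over all indecomposable non-injective modules. Then (iii) for $N$ reads: if $M\notin\T$ (equivalently $\idim N\ge2$), then $\Ext^1_A(DA,\tau M)=0$, i.e.\ $M\in\F$. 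This is precisely $M\in\T\cup\F$. Injective $N$ satisfy (iii) trivially.

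\emph{(i)$\Leftrightarrow$(iii), and the "moreover" part.} (i)$\Rightarrow$(iii) is immediate from the definition. For (iii)$\Rightarrow$(i) I would use $\tau=D\Tr$ and the isomorphism $\Ext^i_A(DA,D X)\cong\Ext^i_{A^{\op}}(X,A)$ for left modules $X$. This gives
\[ \Ext^i_A(DA,\tau M)\cong\Ext^i_{A^{\op}}(\Tr M,A). \]
Hence $\tau M$ being cospherical of injective dimension $n$ means exactly $\Ext^i(\Tr M,A)=0$ for $1\le i\le n-1$, i.e.\ $M$ is $(n-1)$-torsion-free. This already yields the final assertion once (i) holds.

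To get (i) from (iii), I would proceed by induction on $n=\idim N$ via cosyzygies. Take the minimal coresolution $0\to N\to I^0\to\Omega^{-1}N\to0$. Dimension shifting gives $\Ext^i_A(DA,N)\cong\Ext^{i-1}_A(DA,\Omega^{-1}N)$ for $i\ge2$, and $\idim\Omega^{-1}N=n-1$. Applying the inductive hypothesis to the indecomposable summands of $\Omega^{-1}N$ gives the vanishing for $2\le i<n$, while (iii) gives it for $i=1$.

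The main obstacle is that $\Omega^{-1}N$ may have indecomposable summands of injective dimension $1$ (or smaller than $n-1$). For such summands $\Ext^{i-1}$ need not vanish in the range we need, so the naive induction breaks. To handle this, I would first try to use the torsion-pair formulation (ii) instead. Write $N=\tau M$ with $M\in\F$ and aim to show $M$ is $(n-1)$-torsion-free, i.e.\ that $\Omega M$, $\Omega^2 M,\dots$ stay in $\F$ and are controlled. This would use that $\F$ is closed under submodules and that each indecomposable summand of a syzygy lies in $\T$ or $\F$. The grade of summands in $\T$ would then be handled by an argument in the spirit of Lemma~\ref{lemma::sphericalcontrolsgradeofExt}. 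If that fails, I would fall back on checking that a summand $X$ of $\Omega^{-1}N$ with $\idim X<n-1$ still has $\Ext^{j}(DA,X)=0$ for all $j\ge1$ in the relevant degrees, by comparing with the minimality of the coresolution.
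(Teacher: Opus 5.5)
Your argument for (ii)$\Leftrightarrow$(iii) is sound and differs only cosmetically from the paper's. You get $\Ext^1_A(DA,\tau M)\cong D\underline{\Hom}_A(M,DA)$ from the Auslander--Reiten formula and characterise $\F$ by $\underline{\Hom}_A(M,DA)=0$. The paper instead identifies $\Ext^1_A(DA,\tau M)\cong\Ext^1_{A^{\op}}(\Tr M,A)$ with the kernel of $M\to M^{**}$. Likewise (i)$\Rightarrow$(iii) and the final assertion are fine (for $0\neq M\in\F$ one has $M\notin\T$, hence $n\geq2$).

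Your remark that $\F$ is closed under extensions is false in general: by Auslander--Reiten this holds if and only if $A$ is right $1$-quasi-Auslander--Gorenstein. It is not needed, though. Closure of $\T$ and $\F$ under finite direct sums together with $\Hom_A(\T,\F)=0$ suffices.

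The real problem is (iii)$\Rightarrow$(i), the only non-formal implication, which your proposal does not prove. You correctly name the obstacle: $\Omega^{-1}N$ might have non-injective summands of injective dimension $<n-1$. None of your remedies removes it:
\begin{itemize}
\item The condition that $\Omega M,\Omega^2M,\dots$ lie in $\F$ is vacuous, since syzygies are always submodules of projectives.
\item $(n-1)$-torsion-freeness is governed instead by the cokernels of left $\add A$-approximations $M\hookrightarrow P^0$, and controlling those runs into the same decomposition issue.
\item The last fallback assumes what has to be shown.
\end{itemize}

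The missing idea is the dual of Auslander--Bridger's Proposition 2.43 (Proposition~\ref{prop::cosyzygy}). If $\Ext^1_A(DA,N)=0$, then $\Omega^{-1}$ induces $\overline{\Hom}_A(L,N)\cong\overline{\Hom}_A(\Omega^{-1}L,\Omega^{-1}N)$; in particular $\overline{\End}_A(N)\cong\overline{\End}_A(\Omega^{-1}N)$. For $N$ indecomposable non-injective this ring is local, so $\Omega^{-1}N\cong N'\oplus I$ with $N'$ indecomposable and $I$ injective.

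With this, your induction closes. Suppose $\idim N=n\geq3$. Then (iii) gives $\Ext^1_A(DA,N)=0$, so $\Omega^{-1}N$ has a single non-injective summand $N'$, necessarily with $\idim N'=n-1\geq2$. By induction $N'$ is cospherical. Your dimension shift then yields $\Ext^i_A(DA,N)\cong\Ext^{i-1}_A(DA,N')=0$ for $2\leq i\leq n-1$, and $i=1$ is (iii) itself. Without this indecomposability statement, or an equivalent one, the induction does not go through.
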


\begin{remark}
    Notice that in general $(\T, \F)$ is not a torsion pair if it is not split which can be seen in Example \ref{example::nondiectedalgerbawithoutcyclictauorbit} below.
    In general, $ \T = \ ^\perp \F$ so that $(\T, \F)$ is a torsion pair if and only if $\F$ is a torsion free class. 
    This is the case if and only if $\F$ is extension-closed and by \cite[Theorem 0.1]{AR2} this is equivalent to $A$ being right $1$-quasi-Auslander--Gorenstein.
\end{remark}

Recall that for $\La$ a (two-sided) noetherian ring and $M\in\mod\La$, $M$ is {\it $k$-torsion-free} if $\Ext^i_{\La^\op}(\Tr M,\La)=0$ for $1\leq i\leq k$. This is equivalent to saying that there is an exact sequence
\[ \xymatrix{ 0\ar[r]& M\ar[r]& P^0\ar[r]&P^1\ar[r]&\cdots\ar[r]& P^{k-1} } \]
with $P^i\in\proj\La$ such that the sequence $0\to \nu M\to \nu P^0\to \nu P^1\to\cdots\to \nu P^{k-1}$ is exact. We refer for example to \cite{AB} and \cite{AR1,AR2} for more on $k$-torsion-free modules. 

We first show that cosphericalness makes $(\T,\F)$ a split torsion pair. 
\begin{proposition}\label{prop::cosphericalcharacterisation}
Let $A$ be a finite dimensional algebra of finite global dimension. 
If $A$ is cospherical, then each indecomposable $A$-module $M$ satisfies one of the following.
\begin{itemize}
\item[(t)] $\Hom_A(M,A)=0$.
\item[(f)] $M$ is projective, or $n:=\idim\tau M\geq 2$ and $M$ is $(n-1)$-torsion-free.
\end{itemize}
Consequently, the pair $(\T,\F)$ forms a split torsion pair.
\end{proposition}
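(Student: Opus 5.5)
We prove the dichotomy by applying the cospherical condition to $N:=\tau M$. Two standard identifications translate the conditions on $N$ into conditions on $M$.

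\emph{First identification.} Let $M$ be an indecomposable non-projective module. Then $\tau M=D\Tr M$, so
\[ \Ext^i_{A^{\op}}(\Tr M,A)\cong \Ext^i_A(D A, D\Tr M)=\Ext^i_A(DA,\tau M) \quad\text{for all } i\geq 0. \]
Hence $M$ is $k$-torsion-free if and only if $\Ext^i_A(DA,\tau M)=0$ for $1\leq i\leq k$. In particular, $1$-torsion-free modules are exactly the torsionless modules, i.e.\ the submodules of projectives. This uses the exact sequence $0\to\Ext^1_{A^\op}(\Tr M,A)\to M\to M^{**}\to \Ext^2_{A^\op}(\Tr M,A)\to 0$.

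\emph{Second identification.} For an $A$-module $Y$ one has $\idim Y\leq 1$ if and only if $\Hom_A(\tau^{-1}Y,A)=0$. This is the dual of the well-known criterion $\pdim X\leq 1\iff \Hom_A(DA,\tau X)=0$ (see e.g.\ \cite{ASS}).

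\emph{The dichotomy.} Let $M$ be indecomposable. If $M$ is projective, (f) holds. Otherwise $N=\tau M$ is indecomposable and non-injective, and $n:=\idim N$ is finite with $n\geq 1$, since $A$ has finite global dimension. We split into two cases.
\begin{itemize}
\item Suppose $n\geq 2$. Since $A$ is cospherical, $N$ is cospherical, so $\Ext^i_A(DA,N)=0$ for $1\leq i\leq n-1$. By the first identification, $M$ is $(n-1)$-torsion-free, which is (f).
\item Suppose $n=1$. The second identification applied to $Y=\tau M$, using $\tau^{-1}\tau M\cong M$ for non-projective indecomposable $M$, gives $\Hom_A(M,A)=0$, which is (t).
\end{itemize}

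\emph{The split torsion pair.} Every module satisfying (f) lies in $\F$: projectives trivially, and otherwise $n-1\geq 1$, so $M$ is in particular $1$-torsion-free, i.e.\ torsionless. Modules satisfying (t) lie in $\T$ by definition. Moreover $\Hom_A(\T,\F)=0$, because any map from $T\in\T$ to a submodule of a projective $P$ composes to a map $T\to P$, which vanishes. Both $\T$ and $\F$ are closed under finite direct sums and direct summands.

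Now let $X\in{}^\perp\F$ and let $Z$ be an indecomposable summand of $X$. If $Z$ were in $\F$, then $\Hom_A(X,Z)\supseteq\Hom_A(Z,Z)\neq0$ would give a contradiction. By the dichotomy, $Z\in\T$, hence $X\in\T$. Symmetrically, every $X\in\T^\perp$ lies in $\F$. Thus $\T={}^\perp\F$ and $\F=\T^\perp$.

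For any $X$, write $X=X_\T\oplus X_\F$ by sorting its indecomposable summands according to the dichotomy. This gives the canonical split sequence $0\to X_\T\to X\to X_\F\to 0$, so $(\T,\F)$ is a split torsion pair.

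The main point to get right is the two identifications. Everything else is formal. In particular, the case $n=1$ relies on correctly dualizing the criterion $\pdim X\leq1\iff\Hom_A(DA,\tau X)=0$.
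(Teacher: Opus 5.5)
Your proof is correct and follows essentially the same route as the paper: you apply cosphericity to $\tau M$, use the duality $\Ext^i_{A^{\op}}(\Tr M,A)\cong\Ext^i_A(DA,\tau M)$ when $\idim\tau M\geq 2$, and deduce $\Hom_A(M,A)=0$ when $\idim\tau M=1$. In that last case you cite the criterion $\idim Y\leq 1\iff\Hom_A(\tau^{-1}Y,A)=0$, while the paper reads $\nu M=0$ off the minimal injective presentation $0\to\tau M\to\nu P_1\to\nu P_0\to\nu M\to 0$; this is the same fact, and your check that $\T={}^\perp\F$, $\F=\T^\perp$ and that the pair splits just spells out the paper's one-line conclusion.
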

\begin{proof}
    Let $M\in\mod{A}$ be non-projective. Consider a minimal projective presentation 
    \[P_1\to P_0\to M\to 0\] 
    of $M$. Applying the Nakayama functor $\nu=D\Hom(-,A)$ we obtain an exact sequence
    \[0\to \tau M\to \nu(P_1)\to \nu(P_0)\to\nu(M)\to 0.\]
%Let $M\in\mod A$ be an indecomposable non-projective module. Let $P_1\to P_0\to M\to0$ be the minimal projective presentation, which yields the minimal injective presentation $0\to \tau M\to \nu P_1\to \nu P_0$ whose cokernel is $\nu M$.
	Putting $n:=\idim\tau M\geq1$, we may write its minimal injective resolution as %$0\to \nu M\to I^1\to I^2\to\cdots\to I^{m-1}\to 0$
	\[ \xymatrix@R=1mm@C=3mm{
		&\nu P_1\ar[rr]&&\nu P_0\ar[rr]\ar[dr]&& I^1\ar[rr]&&I^2\ar[rr]&&\cdots\ar[rr]&& I^{n-1}\\
		\tau M\ar[ur]&&&&\nu M\ar[ur]&&&&&&&. } \]
	Applying $\Hom_A(DA,-)$ we get a complex
	\begin{equation}\label{ext}
	\xymatrix{	P_1\ar[r]&P_0\ar[r]& \nu^{-1}I^1\ar[r]&\nu^{-1}I^2\ar[r]&\cdots\ar[r]& \nu^{-1}I^{n-1}	}
	 %\xymatrix@R=1mm@C=3mm{
	%	P_1\ar[rr]&&P_0\ar[rrr]\ar[dr]&&& \nu^{-1}I^1\ar[rr]&&\nu^{-1}I^2\ar[rr]&&\cdots\ar[rr]&& \nu^{-1}I^{n-1}\\
	%	&& & M\ar[r]&\nu^{-1}\nu M\ar[ur]& }
	\end{equation}
	which computes $\Ext^i_A(DA,\tau M)$.
	
	Suppose that $A$ is cospherical. Then the complex \eqref{ext} is acyclic except at the end terms.
	If $n=1$, then this means $\nu M=0$, so $M$ satisfies (t). If $n\geq2$, then the complex \eqref{ext} is acyclic at $P_0, \nu^{-1}I^1,\ldots,\nu^{-1}I^{n-2}$, which means $\Ext^i_A(DA,\tau M)=0$ for $1\leq i\leq n-1$. Dualizing, we see that $\Ext^i_{A^\op}(\Tr M,A)=0$ for $1\leq i\leq n-1$, thus $M$ is $(n-1)$-torsion-free.
	%Suppose conversely that each indecomposable $A$-module satisfies either (t) or (f). We show that $A$ is cospherical. Clearly, any injective module is cospherical, so it is enough to show that $\tau M$ is cospherical for each indecomposable $M\in\mod A$.
	%If $M$ satisfies (t), then $\nu M=0$, so $\idim \tau M\leq1$ and thus $\tau M$ is cospherical. Suppose that $M$ satisfies (f). Then $\Ext^i_A(DA,\tau M)=\Ext^i_{A^\op}(\Tr M,A)=0$ for $1\leq i\leq \idim\tau M-1$, and hence $\tau M$ is cospherical.

    Since each module satisfying (f) is in $\F$, we obtain that each indecomposable module lies either in $\T$ or $\F$. Also, since we have $\Hom_A(T,F)=0$ for each $T\in\T$ and $F\in\F$, we deduce that $(\T,\F)$ is a split torsion pair.
\end{proof}

\begin{remark}
Condition (f) means that $M$ is ``as torsion-free as possible''. Indeed, over an algebra of finite global dimension, if $M$ is $k$-torsion free, then one must have $k<\idim\tau M$.
\end{remark}

%Since every $1$-torsion free module is a submodule of a projective, we get the following Corollary.
%
%\begin{corollary}\label{coro:SplitTorsion}
%Let $A$ be a cospherical algebra. Then the pair $(\T,\F)$ forms a split torsion pair.
%\end{corollary}

For the proof of Theorem \ref{thm::characterisationcospherical} we need to recall the dual of \cite[Proposition 2.43]{AB}.

\begin{proposition}[{\cite[Proposition 2.43]{AB}}] \label{prop::cosyzygy}
    Let $A$ be a finite dimensional algebra and $N$ an $A$-module.
    If $\Ext_A^i(D(A),N)=0$ for $1 \leq i \leq k$, then the functor $\Omega^{-i} \colon \overline{\mod} A \to \overline{\mod} A$ induces an isomorphism 
    \begin{equation*}
        \overline{\Hom}_A(M,N) \cong \overline{\Hom}_A(\Omega^{-i}(M), \Omega^{-i}(N))
    \end{equation*} 
    for $1 \leq i \leq k$ and $M \in \mod A$.
\end{proposition}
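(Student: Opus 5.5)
We argue by induction on $i$. Everything reduces to the case $i=1$ under the single hypothesis $\Ext^1_A(DA,N)=0$, combined with a dimension shift. Here $\overline{\Hom}_A$ denotes morphisms modulo those factoring through injective modules. For each module $X$ we fix an injective envelope and write
\[ 0\to X\xrightarrow{\iota_X} I(X)\xrightarrow{\pi_X}\Omega^{-1}X\to 0 . \]
Any $f\colon M\to N$ extends to some $\varphi\colon I(M)\to I(N)$ with $\varphi\iota_M=\iota_N f$, and this induces $\Omega^{-1}f\colon \Omega^{-1}M\to\Omega^{-1}N$. The standard argument shows that $\Omega^{-1}f$ is well defined in $\overline{\mod}A$, independently of the choice of $\varphi$, and that $\Omega^{-1}$ is a functor on $\overline{\mod}A$. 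We take this as known. The key observation is that $\Ext^1_A(J,N)=0$ for every injective $J$, because $J\in\add DA$.

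\emph{Step 1 (surjectivity for $i=1$).} Let $g\colon\Omega^{-1}M\to\Omega^{-1}N$. Consider $g\pi_M\colon I(M)\to\Omega^{-1}N$. The obstruction to lifting it along $\pi_N$ lies in $\Ext^1_A(I(M),N)=0$. Hence there is $\varphi\colon I(M)\to I(N)$ with $\pi_N\varphi=g\pi_M$. Then $\pi_N\varphi\iota_M=g\pi_M\iota_M=0$, so $\varphi\iota_M$ factors as $\iota_N f$ for some $f\colon M\to N$. By construction $\Omega^{-1}f=g$.

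\emph{Step 2 (injectivity for $i=1$).} Suppose $\Omega^{-1}f$ factors through an injective $J$, say $\Omega^{-1}f=b a$ with $a\colon\Omega^{-1}M\to J$ and $b\colon J\to\Omega^{-1}N$.
\begin{itemize}
\item Since $\Ext^1_A(J,N)=0$, the map $b$ lifts to some $b'\colon J\to I(N)$ with $\pi_N b'=b$.
\item Put $h:=b'a\pi_M\colon I(M)\to I(N)$. Then $\pi_N(\varphi-h)=\Omega^{-1}f\,\pi_M-ba\pi_M=0$.
\item Hence $\varphi-h=\iota_N\psi$ for some $\psi\colon I(M)\to N$.
\item Restricting along $\iota_M$ and using $\pi_M\iota_M=0$ gives $\iota_N f=\varphi\iota_M=\iota_N\psi\iota_M$.
\item Since $\iota_N$ is a monomorphism, $f=\psi\iota_M$. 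So $f$ factors through the injective module $I(M)$, i.e. $f=0$ in $\overline{\Hom}_A(M,N)$.
\end{itemize}

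\emph{Step 3 (induction).} Applying $\Hom_A(DA,-)$ to $0\to N\to I(N)\to\Omega^{-1}N\to0$ gives, since $\Ext^{j}_A(DA,I(N))=0$ for $j\geq1$,
\[ \Ext^j_A(DA,\Omega^{-1}N)\cong\Ext^{j+1}_A(DA,N)\quad (j\geq1). \]
Hence $\Ext^j_A(DA,\Omega^{-1}N)=0$ for $1\leq j\leq k-1$. The induction hypothesis, applied to $\Omega^{-1}M$ and $\Omega^{-1}N$ with $k-1$, gives isomorphisms $\overline{\Hom}_A(\Omega^{-1}M,\Omega^{-1}N)\cong\overline{\Hom}_A(\Omega^{-i}M,\Omega^{-i}N)$ for $1\leq i-1\leq k-1$. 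Composing with the isomorphism from Steps 1 and 2 yields the claim for all $1\leq i\leq k$.

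The main point requiring care is Step 2: the chosen lift $\varphi$ must be compared with a map built from the factorisation through $J$. This is exactly where the vanishing $\Ext^1_A(J,N)=0$ is used a second time.
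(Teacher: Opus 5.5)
Your proof is correct and complete. Steps 1 and 2 settle the case $i=1$ using only $\Ext^1_A(J,N)=0$ for injective $J$: once with $J=I(M)$, to lift $g\pi_M$ along $\pi_N$, and once with the intermediate injective, to lift $b$. The dimension shift $\Ext^j_A(DA,\Omega^{-1}N)\cong\Ext^{j+1}_A(DA,N)$ is exactly what the iteration needs.

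The paper gives no argument of its own here; it simply quotes the dual of \cite[Proposition 2.43]{AB}. Your diagram chase therefore supplies what the paper delegates to the literature.

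If you want a slightly more conceptual packaging of the same argument, note that $[f]\mapsto[\Omega^{-1}f]$ factors as
\[ \overline{\Hom}_A(M,N)\longrightarrow\Ext^1_A(\Omega^{-1}M,N)\longrightarrow\overline{\Hom}_A(\Omega^{-1}M,\Omega^{-1}N). \]
\begin{itemize}
\item The first map sends $f$ to the pushout of $0\to M\to I(M)\to\Omega^{-1}M\to 0$ along $f$. It is always injective, and it is surjective once $\Ext^1_A(I(M),N)=0$.
\item The second map comes from the identification $\Ext^1_A(\Omega^{-1}M,N)\cong\Hom_A(\Omega^{-1}M,\Omega^{-1}N)/\pi_N\Hom_A(\Omega^{-1}M,I(N))$, followed by projection. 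It is always surjective, and it is injective once every map $\Omega^{-1}M\to J\to\Omega^{-1}N$ through an injective factors through $\pi_N$, i.e.\ once $\Ext^1_A(J,N)=0$.
\end{itemize}
Your Step 1 is the surjectivity of this composite and your Step 2 is its injectivity. The hypothesis enters in precisely the same two places. What this packaging adds is that it shows which half of each step is automatic: that $f=\psi\iota_M$ in Step 2 is just the injectivity of the first map. Your version has the advantage of being fully elementary.
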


%For the convenience of the reader, we include a short a proof of our case.
%\begin{proof}
%Let $M \hookrightarrow I_M$ be the injective envelope of $M$ and fix the short exact sequence.
%\begin{equation*}
%    0 \to M \to I_M \to  \Omega^{-1} M \to 0
%\end{equation*}
%Since $I_M$ is the injective envelope, we get that
%\begin{equation*}
%    \overline{\Hom}_A(M,M) \cong \coker(\Hom_A(I_M, M) \to \Hom_A(M,M)) \cong \Ext^1(\Omega^{-1} M, M) 
%\end{equation*}
%where the second isomorphism is induced from the long exact sequence obtain from the functor $\Hom(-, M)$.
%On the other hand we have a canonical isomorphism 
%\begin{equation*}
%    \Ext^1(\Omega^{-1}M, M) \cong \coker (\Hom_A(\Omega^{-1} M, I_M) \to \Hom_A(\Omega^{-1}M, \Omega^-M))
%\end{equation*}
%from the long exact sequence obtained from $\Hom(\Omega^{-1}. -)$.
%To see that the later cokernel is $\overline{\Hom}_A(\Omega^{-1}(M), \Omega^{-1}(M))$ it remains to show to any morphism $I \to \Omega^{-1} M$ where $I$ is injective factors through $I_M$ which we obtain from the long exact sequence
%\begin{equation*}
%    0 \to \Hom_A(I,M) \to \Hom_A(I, I_M) \to \Hom_A(I, \Omega^{-1} M) \to \Ext^1(I, M) = 0
%\end{equation*}
%of the fucntor $\Hom_A(I, -)$.
%Since the isomorphism $\overline{\Hom}_A(M,M) \cong \overline{\Hom}_A(\Omega^{-1}(M), \Omega^{-1}(M))$ constructed above is the one induced from the functor $\Omega^{-1} \colon \overline{\mod} A \to \overline{\mod} A$, it is also an isomorphism of of algebras.
%The case $k>1$ now follows from induction.
%\end{proof}

\begin{corollary}\label{coro::Ausladnerlemma}
    Let $A$ be a finite dimensional algebra and $M$ a non-injective indecomposable $A$-module with $\Ext_A^1(D(A),M)=0$.
    Then $\Omega^{-1} M \cong N \oplus I$ where $N$ is indecomposable and $I$ is injective. 
    \begin{proof}
        Since $\mod A$ is a Krull--Schmidt category, we have that $M$ is indecomposable if and only if $\End_A(M)$ is a local ring and moreover, $\overline{\mod} A$ is also a Krull-Schmidt category. 
        Hence, if $M$ is indecomposable, then $\overline{\End}(M) \cong \overline{\End}(\Omega^{-1}M)$ is local or $0$.
        But then $\Omega^{-1} M$ can have at most one indecomposable non-injective direct summand. 
    \end{proof}
\end{corollary}

\begin{proof}[Proof of Theorem \ref{thm::characterisationcospherical}]
    We have seen already in Proposition \ref{prop::cosphericalcharacterisation} that if $A$ is cospherical, then $(\T, \F)$ is a split torsion pair. 

    Assume now that $(\T, \F)$ is a split torsion pair.
    As in the proof of Proposition \ref{prop::cosphericalcharacterisation}, notice that a minimal projective presentation $P_1\to P_0\to M\to0$ of a non-projective module $M \in \mod A$ yields the following exact sequence.
    \[0\to \tau M\to \nu(P_1)\to \nu(P_0)\to\nu(M)\to 0.\]
    It follows that $M\in\T$ if and only if $\nu(M)=0$ if and only if $\idim\tau M\leq 1$, i.e.
    \[\T=\{M\in\mod A\mid \Hom_A(M,A)=0\}=\{M\in\mod A\backslash \proj{A}\mid \idim \tau M\leq 1\}.\]
    Consequently, if $X$ is indecomposable with $\idim X \geq 2$, then $X=\tau M$ for some non-projective $M\in \F$.
    Now, $M$ is a submodule of a projective if and only if $ev:M\to (M^*)^*$ is a monomorphism. By \cite[Proposition 2.6]{AB}, this is the case if and only if $\ker(ev)=\Ext^1_{A^{op}}(\Tr M,A)=\Ext^1_A(DA,\tau M)$ is $0$.

    Finally, assume $\Ext^1_A(DA,M) = 0$ if $\idim M \geq 2$.
    We prove by induction on $n=\idim M$ that $\Ext^i_A(DA,M)=0$ for $1\leq i\leq n-1$. 
    %There is nothing to prove for $n=1$, so assume $n\geq2$.
    By assumption, any module with $\idim M \in \{0,1, 2\}$ is automatically cospherical.
    So assume now that $\idim M = n+1 > 2$. 
    By Corollary \ref{coro::Ausladnerlemma} we know that $\Omega^{-1} M \cong N \oplus I$ for $N$ indecomposable and $I$ injective. 
    Hence, $\idim N = n$ and thus $N$ is cospherical by the induction hypothesis.
    Since $I$ is injective, we get that also $\Omega^{-1}M$ is cospherical.
    In particular, we get that
    \begin{equation*}
        \Ext^i_A(DA, M) \cong \Ext^{i-1}_A(DA, \Omega^{-1} M) = 0
    \end{equation*}
    for $2 \leq i \leq n$, proving that $M$ is also cospherical.
    
    The final assertion is a direct application of Proposition \ref{prop::cosphericalcharacterisation}.
\end{proof}

\section{Auslander--Gorenstein property of Auslander--Yoneda algebras}

We first extend the notion of being Auslander--Gorenstein from finite dimensional algebras to dualizing varieties.

Recall that a {\it dualizing variety} is a $K$-linear category $\C$ such that $D=\Hom_K(-,K)$ induces a duality $\mod\C\leftrightarrow\mod\C^\op$.
It follows from the definition that for a dualizing variety $\C$, the category $\mod\C$ is $K$-linear, $\Hom$-finite, Krull--Schmidt with enough projectives $\{\C(-,C)\mid C\in\C\}$ and enough injectives $\{D\C(C,-)\mid C\in\C\}$. We refer for example to \cite{AR3} for more on dualizing varieties.
\begin{definition}
Let $\C$ be a dualizing variety. For each $C\in\C$, consider the minimal injective resolution
\[ \xymatrix@!R=1mm{
	0\ar[r]&\C(-,C)\ar[r]&I^0\ar[r]&I^1\ar[r]&\cdots \\
	0\ar[r]&\C(C,-)\ar[r]&J^0\ar[r]&J^1\ar[r]&\cdots} \]
of projective $\C$- and $\C^\op$-modules. We say that $\C$ is {\it $n$-Gorenstein} if $\pdim I^i\leq i$ in $\mod\C$ and $\pdim J^i\leq i$ in $\mod\C^\op$ for all $0\leq i<n$. We say that $\C$ is {\it Auslander--Gorenstein} if $\C(-,C)$ and $\C(C,-)$ have finite injective dimension for each $C\in\C$ and $\C$ is $n$-Gorenstein for all $n>0$.
\end{definition}
Let $\C$ be an idempotent-complete dualizing variety which is Auslander--Gorenstein. For each indecomposable $C\in\C$, let
\[ \xymatrix{ 0\ar[r]&\C(-,C)\ar[r]&I^0\ar[r]&I^1\ar[r]&\cdots\ar[r]& I^n\ar[r]& 0} \]
be the minimal injective resolution. In this case, it follows as in \cite[Chapter 5]{AR1} that the last term $I^n$ is indecomposable, and hence is isomorphic to $D\C(C^\prime,-)$ for some indecomposable $C^\prime\in\C$. The correspondence $C\mapsto C^\prime$ yields a bijection on the set of isomorphism classes of indecomposables, which is called the {\it Auslander--Reiten bijection}.

\bigskip

Let $A$ be a finite dimensional algebra, and consider the full subcategory
\[ \Y:=\add\{M[i]\mid M\in\mod A, i\in\Z\}\subset\Db(\mod A) \]
of the derived category, called the {\it Yoneda category}, which was first considered in \cite[Chapitre~III.3.3.2]{Ver}. 
Recall from \cite[Proposition~4.2]{Ha1} that if $A$ is representation-finite, then 
\begin{align*}
    \Y \cong \proj^\Z Y(A) && \text{and} && \mod\Y \cong \mod^\Z Y(A)
\end{align*}
so that the Yoneda category generalizes the Auslander--Yoneda algebra $Y(A) := \Ext^*_A(M,M)$, where $M$ is a (basic) additive generator of $\mod A$, in terms of its (graded) representation theory.

By \cite[Theorem~3.1]{Ha}, we know that $\Y$ is a dualizing variety which is $1$-Iwanaga--Gorenstein when $A$ has finite global dimension. The main result of this section is the following characterization of $\Y$ being Auslander--Gorenstein, and the description of the Auslander--Reiten bijection.
For $L\in\mod A$, we denote by $L_{\ij}$ the maximal injective direct summand of $L$, and by $L_{\ninj}$ the complement, so we have $L=L_{\ij}\oplus L_{\ninj}$.
\begin{theorem}\label{Yoneda-AG}
Let $A$ be a finite dimensional algebra of finite global dimension.
\begin{enumerate}
\item\label{iff} $\Y$ is Auslander--Gorenstein if and only if $A$ is bispherical.
\suspend{enumerate}
Suppose in what follows that the conditions in (\ref{iff}) are satisfied.
\resume{enumerate}
\item\label{resol} For each indecomposable $n$-spherical module $M\in\mod A$, the minimal projective resolution of the injective module $D\Y(M,-)$ is given by $\Y(-,\nu M)\xsimeq D\Y(M,-)$ 
%\[ \xymatrix{ 	&0\ar[r]&\Y(-,\nu M)\ar[r]&D\Y(M,-)\ar[r]& 0 } \]
if $M$ is projective, and by
\[ \xymatrix@C=6mm{ &0\ar[r]& \Y(-,(\Om^{-1}\tau M)_\ninj)\ar[r]&\Y(-,(\nu P_0)/(\Om^{-1}\tau M)_\ij)\oplus \Y(-,\tau_nM[n])\ar[r]& D\Y(M,-)\ar[r]& 0} \]
if $M$ is non-projective.
\item\label{ARbij} %Suppose that $\Y$ is Auslander--Gorenstein. 
The Auslander--Reiten permutation $F$ is given as follows: for $M\in\ind \Y$ of projective dimension $n$, we have
	\[  FM=\begin{cases}
		\nu M & \text{ if } M \text{ is a shift of a projective module},\\
		\tau_nM[n] & \text{ if } M \text{ is a shift of an $n$-perfect module},\\
		\Omega^{-1}\tau M=\Omega\nu M &\text{ if } M \text{ is a shift of a non-projective module such that } \Hom_A(M,A)\neq0.
	\end{cases}
	\]
\end{enumerate}
\end{theorem}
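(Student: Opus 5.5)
The plan is to compute, for each indecomposable $M\in\mod A$ (shifts are harmless because $\Y$ is closed under $[1]$), the minimal projective resolution of the indecomposable injective $\Y$-module $D\Y(M,-)$. By the standard left–right criterion for Auslander--Gorenstein (Auslander, Iyama), $\Y$ is Auslander--Gorenstein if and only if, for every indecomposable injective $I$ of $\mod\Y$ and of $\mod\Y^\op$, the $i$-th term of the minimal projective resolution has injective dimension at most $i$, together with finiteness of the dimensions. Since \cite[Theorem~3.1]{Ha} already gives that $\Y$ is a $1$-Iwanaga--Gorenstein dualizing variety, finiteness is available. The task therefore reduces to reading off these resolutions explicitly and deciding when the grade conditions hold. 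The duality $\mod\Y\leftrightarrow\mod\Y^\op$ together with $A\leftrightarrow A^\op$ will handle the other side. This is why both spherical (for left modules) and cospherical (for right modules) appear, so that bispherical is what is needed.

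First, I compute $D\Y(M,-)$ on $\Y$. Serre duality in $\Db(\mod A)$ with $\mathbb{S}=\nu=-\lotimes_A DA$ gives $D\Y(M,X)\cong\Hom_{\Db}(X,\nu M)$ for $X\in\Y$. If $M$ is projective, $\nu M$ is an injective module in $\Y$, so $D\Y(M,-)\cong\Y(-,\nu M)$ is projective. If $M$ is non-projective, I use the triangle coming from the minimal projective presentation, $\tau M\to\nu P_1\to\nu P_0\to\nu M\to$, and more precisely the complex $\nu P_\bullet$. The object $\nu M$ in $\Db$ has cohomology $H^{-j}=\Tor$-type terms, namely $D\Ext^j_A(M,A)$. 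If $M$ is $n$-spherical, only $j=0$ and $j=n$ survive, and $D\Ext^n(M,A)=\tau_n M$. Hence $\nu M$ is an extension of $\nu M_{\text{mod}}=H^0$ and $\tau_nM[n]$, which gives a triangle
\[ \tau_nM[n]\to \nu M\to H^0(\nu M)\to \tau_nM[n+1]. \]
Applying $\Hom_{\Db}(-,\cdot)$ restricted to $\Y$, and resolving $H^0(\nu M)$ via $0\to\Om^{-1}\tau M\to \nu P_0\to\nu M\to 0$, I aim to produce the two-term resolution in (\ref{resol}), after splitting off injective summands of $\Om^{-1}\tau M$. I then check exactness on objects of $\Y$ degreewise, using the vanishing $\Ext^i(DA,\cdot)=0$ for the relevant modules.

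The first displayed term has injective dimension $0$. The grade conditions then demand that $\Y(-,(\Om^{-1}\tau M)_\ninj)$ have injective dimension $\le1$, and that the resolution stop there. Conversely, if some $M$ is not spherical, the cohomology of $\nu M$ lives in three or more degrees. The minimal resolution then acquires an extra projective summand $\Y(-,D\Ext^j(M,A)[j])$ in homological degree $\ge2$. I will show that its injective dimension is forced to be $0$ in degree $1$, which violates minimality/the Auslander condition, and I expect to compute this via the same resolution for the dual module. Because injective dimensions of representables $\Y(-,X)$ are in turn governed by the minimal projective resolutions on the $\Y^\op$ side, the criterion on both sides is exactly bispherical.

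The main obstacle will be establishing the precise resolution and its minimality in (\ref{resol}), in particular identifying the middle term with $\Y(-,\nu P_0/(\Om^{-1}\tau M)_\ij)$ and showing that the map from $\Y(-,(\Om^{-1}\tau M)_\ninj)$ is injective, which requires $\Ext^{\ge1}$-vanishing from cosphericity of $\tau M$. For this I will use Proposition \ref{prop::cosphericalcharacterisation} and Corollary \ref{coro::Ausladnerlemma} to control $\Om^{-1}\tau M$. Finally, (\ref{ARbij}) follows by reading off the last term. It is $\nu M$ for projective $M$. For $n$-perfect $M$, $H^0(\nu M)=\nu M=0$ and only $\tau_nM[n]$ remains. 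When $\Hom(M,A)\ne0$, the last term is $\Om^{-1}\tau M=\Om\nu M$, which is indecomposable by Corollary \ref{coro::Ausladnerlemma}.
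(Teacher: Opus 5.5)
There is a genuine gap. It is most serious in the converse direction ($\Y$ Auslander--Gorenstein $\Rightarrow$ $A$ bispherical), which rests on an incorrect picture.

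\emph{What is wrong in the converse sketch.} A non-spherical $M$ need not make $M\lotimes_A DA$ have cohomology in three degrees. Since $H^{-j}=D\Ext^j_A(M,A)$, if $\Hom_A(M,A)=0$ there may be only the two degrees $-m$ and $-n$, with $m$ the reduced grade. This is exactly the hard case. Nor can anything appear in homological degree $\geq 2$: by the $1$-Iwanaga--Gorenstein property you quote, $\pdim D\Y(M,-)\leq 1$ for every $M$. Write $X=(\nu P_n\to\cdots\to\nu P_0)$. The extra cohomology only affects the two terms of the (non-minimal) length-one resolution coming from $BX\to ZX\to X$. These are the summands $\Y(-,\tau_jM[j])$ in degree $0$ and $\Y(-,B^{-j}X[j])$ in degree $1$, where $B^{-j}X=\Im(\nu P_{j+1}\to\nu P_j)$. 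They are not summands of the form $\Y(-,D\Ext^j_A(M,A)[j])$.

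\emph{What is missing.} Two ingredients are needed, and your sentence about an injective dimension being ``forced to be $0$ in degree $1$'' supplies neither.
(a) In the Auslander--Gorenstein case, the syzygy of an indecomposable injective is indecomposable, because it is the last term of the minimal projective resolution. The syzygy of $D\Y(M,-)$ contains both $\Y(-,(\Om^{-1}\tau M)_{\ninj})$ and $\Y(-,B^{-m}X[m])$ as summands, since the relevant components of the differential are radical maps. Hence either $B^{-m}X=0$, i.e.\ $m=n$, or $\Om^{-1}\tau M$ is injective, i.e.\ $\Hom_A(M,A)=0$.
(b) In the latter case, the summand $\Y(-,\tau_mM[m])$ of the projective cover must be injective. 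Now $\nu P_m\to\cdots\to\nu P_0$ is an injective resolution of $\tau_mM$, so $\RHom_A(DA,\tau_mM)\simeq(P_m\to\cdots\to P_0)$. Its cohomology $M\neq 0$ at the top forces $P_m\to P_{m-1}$ to be injective for this to be a stalk complex, i.e.\ $m=n$.

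\emph{The forward direction.} Here the decisive step is asserted, not proved. You say the degree-$0$ term has injective dimension $0$, but injectivity of $\Y(-,\tau_nM[n])$ is precisely where cosphericity enters.
\begin{itemize}
\item You need the criterion that $\Y(-,Y)$ is injective if and only if $\RHom_A(DA,Y)$ is a stalk complex. This comes from Serre duality $D\Y(-,Y)\cong\Y(\RHom_A(DA,Y),-)$ together with the fact that $\Y(-,Z)$ is projective if and only if $Z$ is a stalk complex, applied over $A^\op$.
\item Since $\tau_nM$ is indecomposable and cospherical, it then suffices to show $\Hom_A(DA,\tau_nM)=0$. This follows from the exact sequence $0\to\tau_nM\to\nu P_n\to\nu P_{n-1}$ and injectivity of $P_n\to P_{n-1}$.
\end{itemize}
You instead place cosphericity in the left exactness of the resolution, where it is not needed. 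Exactness holds for every spherical $M$. It follows from the triangle $BX\to ZX\to X$, or, in your truncation approach, from dimension shifting along $0\to\tau_nM\to\nu P_n\to\cdots\to\nu P_1\to\Om^{-1}\tau M\to0$. Corollary~\ref{coro::Ausladnerlemma} is only needed for part (3).

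The bound $\idim\leq 1$ on the syzygy term is automatic from $1$-Iwanaga--Gorenstein. So the whole Auslander condition reduces to injectivity of these projective covers, on both sides. Once that is established, your reading of part (3) is fine.
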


We start our discussion with a general observation on complexes. 
Recall that $X\in\rD(\Mod A)$ is a {\it stalk complex} if it is isomorphic to some shifts of modules: $X\simeq \bigoplus_{i\in\Z}H^iX[-i]$ in $\rD(\Mod A)$.
For a complex $X\in\rD(\Mod A)$, we denote by 
\[ ZX:=\bigoplus_{i\in\Z}Z^iX[-i], \text{ and } BX:=\bigoplus_{i\in\Z}B^iX[-i] \]
the stalk complexes of cocycles and coboundaries, respectively. We have an exact sequence
\[ \xymatrix{ 0\ar[r]& ZX\ar[r]& X\ar[r]& BX[1]\ar[r]& 0} \]
of complexes, which yields a triangle
\[ \xymatrix{ BX\ar[r]^\del&ZX\ar[r]& X } \]
in the derived category $\rD(\Mod A)$.
Let us first describe the connecting morphism $\del\colon BX\to ZX$ above. For each $i\in\Z$, we denote by $\iota$ the inclusion from $B^iX$ to $Z^iX$ or to $X^i$. Also, we let $\del_i\in\Hom_{\rD(\Mod A)}(B^iX,Z^{i-1}X[1])\simeq\Ext^1_A(B^iX,Z^{i-1}X)$ be the morphism corresponding to the extension $0\to Z^{i-1}X\to X^i\to B^iX\to 0$.
\begin{lemma}\label{BZ}
The restriction of $\del$ to the summand $B^iX[-i]$ is induced by $\begin{pmatrix} \iota\\ -\del_i\end{pmatrix}\colon B^iX\to Z^iX\oplus Z^{i-1}X[1]$.
\end{lemma}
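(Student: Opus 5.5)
The plan is to compute the connecting morphism $\del$ directly as a composite of explicit chain maps. Write $X$ as a complex and consider the short exact sequence of complexes $0\to ZX\to X\to BX[1]\to 0$, where $ZX$ and $BX$ have zero differentials. In the derived category, the connecting morphism of a short exact sequence $0\to U\xrightarrow{f} V\xrightarrow{g} W\to 0$ is the composite
\[ W\to \mathrm{Cone}(f)\to U[1]. \]
Here $W\to \mathrm{Cone}(f)$ is the inverse of the quasi-isomorphism $\mathrm{Cone}(f)\to W$ induced by $g$. So I will work with $\mathrm{Cone}(ZX\to X)$, which has components $X^j\oplus Z^{j+1}X$ in degree $j$. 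After shifting by $[-1]$, $\del\colon BX\to ZX$ is obtained from the roof $BX \xleftarrow{\simeq} \mathrm{Cone}(ZX\to X)[-1]\to ZX$.

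Since everything is additive and $BX=\bigoplus_i B^iX[-i]$, it suffices to compute the restriction of $\del$ to one summand $B^iX[-i]$. The key step is to replace the cone by a smaller quasi-isomorphic piece adapted to this summand. The relevant piece of the sequence $0\to ZX\to X\to BX[1]\to 0$ is the short exact sequence of modules $0\to Z^{i-1}X\to X^{i-1}\to B^iX\to 0$ (so $B^iX$ sits in degree $i-1$ of $BX[1]$), together with the inclusion $\iota\colon B^iX\to Z^iX$. This inclusion appears because the differential of $X$ maps $X^{i-1}$ onto $B^iX\subset Z^iX$, and in the cone this shows up as a component into $Z^iX$. (The lemma's $0\to Z^{i-1}X\to X^i\to B^iX\to0$ is the same sequence up to the paper's indexing convention; I will fix the convention consistently and check it.)

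Concretely, I will take the two-term complex $Z^{i-1}X\to X^{i-1}$, mapping quasi-isomorphically onto $B^iX$. Then I will write down a chain map from it into $Z^{i-1}X[1]\oplus Z^iX$, placed appropriately in degrees. Its first component is the identity $Z^{i-1}X\to Z^{i-1}X$ in degree $i-2$ (up to sign), and its second component is the differential $X^{i-1}\to Z^iX$ in degree $i-1$. Composing with the inverse of the quasi-isomorphism, the second component is the inclusion $\iota$ on $B^iX$. The first component is, by the standard description of Yoneda $\Ext^1$, exactly the class of the extension $0\to Z^{i-1}X\to X^{i-1}\to B^iX\to0$, namely $\del_i$.

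The main obstacle is sign bookkeeping rather than anything conceptual. One sign comes from the cone differential $\begin{pmatrix} -d & 0\\ f& d\end{pmatrix}$ and the identification $\mathrm{Cone}(f)[-1]\to U$. Another comes from the convention identifying $\Ext^1$ classes with morphisms $W\to U[1]$ via the connecting map of a short exact sequence, which is the same construction and so introduces a consistent sign. I would fix the convention so that the triangle $BX\xrightarrow{\del}ZX\to X\to BX[1]$ is the standard rotation, then verify that the net effect is a minus sign on $\del_i$ relative to $\iota$. This gives $\begin{pmatrix}\iota\\ -\del_i\end{pmatrix}$.
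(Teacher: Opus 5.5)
Your argument is correct, but it runs dual to the paper's. The paper never forms the cone of $ZX\to X$. Instead it replaces each $Z^iX[-i]$ by the quasi-isomorphic complex $\widetilde{Z_i}=(X^i\to B^{i+1}X)$. Then $ZX\to X$ becomes the degreewise surjection $\bigoplus_i\widetilde{Z_i}\to X$ given by $\begin{pmatrix}\iota&1\end{pmatrix}$, whose kernel is exactly $BX$. So $\del$ is an honest chain map $BX\to\bigoplus_i\widetilde{Z_i}$, namely $b\mapsto(-b,\iota b)$ in each degree, and both components can be read off at once. The $\iota$-part factors through $Z^iX[-i]$. The part $B^iX[-i]\to\widetilde{Z_{i-1}}=(X^{i-1}\to B^iX)$ is $-1$ in the top degree and represents $-\del_i$.

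So the paper coresolves the target $Z^{i-1}X$ using the sequence $0\to Z^{i-1}X\to X^{i-1}\to B^iX\to 0$, whereas you resolve the source $B^iX$ using the same sequence. The trade-offs are as follows.
\begin{itemize}
\item The paper's version obtains $\del$ globally without inverting anything on the source. However, it leaves implicit that $0\to BX\to\bigoplus_i\widetilde{Z_i}\to X\to 0$ induces the same triangle as the original sequence.
\item Your version applies the standard cone recipe to the given sequence, so it needs no such comparison. The price is the lift of $(Z^{i-1}X\to X^{i-1})$ into the cone, which you leave implicit.
\end{itemize}
With your cone convention, that lift is $z\mapsto(z,0)$, $x\mapsto(-dx,x)$. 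These lifts assemble to an isomorphism of complexes $\bigoplus_i(Z^{i-1}X\to X^{i-1})\cong\mathrm{Cone}(ZX\to X)$ over $BX[1]$. Composing with the projection onto $ZX[1]$ yields $(\mathrm{id},-d)$, which is exactly where the relative minus sign comes from.

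The absolute sign depends on shift and rotation conventions in either argument, and nothing later depends on it. Lemmas \ref{B^0} and \ref{tau_m} only use that one component is $\iota$ and the other has degree one. You are also right that the middle term of the extension defining $\del_i$ should be $X^{i-1}$.
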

\begin{proof}
	We replace the inclusion $ZX\to X$ by a surjection. Note that for each $i\in\Z$, the cocycle $Z^iX[-i]$ is quasi-isomorphic to the complex $\widetilde{Z_i}:=(X^i\to B^{i+1}X)$ with terms in degree $i$ and $i+1$. Then the inclusion $ZX\to X$ becomes a map $\bigoplus_{i\in\Z}\widetilde{Z_i}\to X$, which is surjective with kernel $BX$ as described in the diagram below.
	\[ \xymatrix{
		\cdots\ar[r]& B^{i-1}X\ar[r]\ar[d]& B^{i}X\ar[r]\ar[d]&B^{i+1}X\ar[r]\ar[d]&\cdots\\
		\cdots\ar[r]& B^{i-1}X \oplus X^{i-1}\ar[r]\ar[d]_-{\begin{pmatrix} \iota&1\end{pmatrix}}& B^{i}X\oplus X^{i}\ar[r]\ar[d]_-{\begin{pmatrix} \iota&1\end{pmatrix}}& B^{i+1}X\oplus X^{i+1}\ar[r]\ar[d]_-{\begin{pmatrix} \iota&1\end{pmatrix}}&\cdots\\
		\cdots\ar[r]& X^{i-1}\ar[r]& X^{i}\ar[r]&X^{i+1}\ar[r]&\cdots } \]
	It follows that at each degree $i$, the map $BX\to\bigoplus_{i\in\Z}\widetilde{Z_i}$ is given by $\begin{pmatrix} -1\\\iota\end{pmatrix}\colon B^iX\to B^iX\oplus X^i$, which corresponds to $B^iX\to\widetilde{Z_{i-1}}\oplus \widetilde{Z_i}$, as desired.
\end{proof}

Now we return to our setting of Yoneda categories.
\begin{lemma}\label{pr-in}
Let $X\in\Db(\mod A)$.
\begin{enumerate}
\item\label{pr} $\Y(-,X)$ is projective if and only if $X$ is a stalk complex.
\item\label{in} $\Y(-,X)$ is injective if and only if $\RHom_A(DA,X)$ is a stalk complex.
\end{enumerate}
\end{lemma}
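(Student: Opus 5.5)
Part (\ref{pr}) is a Yoneda-type statement. First I would observe that every object of $\Y$ is a stalk complex, because $\Y=\add\{M[i]\}$ and the stalk complexes in $\Db(\mod A)$ are exactly the finite direct sums of shifted modules. For the "if" direction, take $X$ a stalk complex. Then $X\in\Y$, so $\Y(-,X)$ is representable and hence projective.

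For the converse, suppose $\Y(-,X)$ is projective. Since $\mod\Y$ is Krull--Schmidt with indecomposable projectives $\Y(-,Y)$ for $Y\in\ind\Y$, we get an isomorphism $\Y(-,X)\cong\Y(-,Y)$ with $Y\in\Y$. By Yoneda this isomorphism comes from a morphism $f\colon Y\to X$ in $\Db(\mod A)$ such that $\Y(-,f)$ is an isomorphism. I then test $f$ against $A[i]\in\Y$ for all $i$. Since $\Hom(A[i],-)=H^{-i}(-)$, the map $f$ induces isomorphisms on all cohomology, so $f$ is a quasi-isomorphism and $X\cong Y$ is a stalk complex.

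For part (\ref{in}) I would transport the question via the Serre-type duality. Let $\nu=-\lotimes_A DA$ with quasi-inverse $\nu^{-1}=\RHom_A(DA,-)$; this is an autoequivalence of $\Db(\mod A)$ when $\gldim A<\infty$, and it satisfies $D\Hom(Y,Z)\cong\Hom(Z,\nu Y)$. The injective $\Y$-modules are, up to direct summands and Krull--Schmidt, of the form $D\Y(Y,-)$ with $Y\in\Y$. By the duality, $D\Y(Y,-)\cong\Hom_{\Db}(-,\nu Y)|_\Y$.

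The "if" direction: assume $\nu^{-1}X$ is a stalk complex. Then $\nu^{-1}X$ lies in $\Y$, and $\Y(-,X)=\Hom(-,\nu(\nu^{-1}X))|_\Y\cong D\Y(\nu^{-1}X,-)$, which is injective.

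The converse: assume $\Y(-,X)\cong D\Y(Y,-)\cong\Hom(-,\nu Y)|_\Y$. By Yoneda on $\Y$ (the argument works for representables restricted to $\Y$, since $\Hom(W,-)|_\Y$ is determined by its values on objects of $\Y$), this isomorphism is induced by a morphism $X\to\nu Y$ or its inverse direction. Evaluating again at $A[i]$ shows that it is a quasi-isomorphism. Hence $X\cong\nu Y$, so $\nu^{-1}X\cong Y$ is a stalk complex.

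The main obstacle is this Yoneda step. The functors $\Hom(-,X)|_\Y$ are not a priori representable in $\Y$, so one has to check that a natural isomorphism between two restricted functors of the form $\Hom(-,Z)|_\Y$ comes from a morphism in $\Db$. This does hold: evaluate at a projective resolution viewed inside $\Y$. More simply, since $A\in\Y$, the element $\mathrm{id}$ is not available, but we can take $X'$ a bounded complex of projectives. Its brutal truncations are built from $\add A[i]\subset\Y$, and the cohomology-detection argument via $A[i]$ suffices to conclude the quasi-isomorphism once a morphism is produced. I would produce the morphism by representing both functors through a finite resolution, or alternatively by comparing dimensions of $\Hom(A[i],-)$, which already identifies the cohomology, and then use Lemma \ref{BZ} to show that the splitting data agree.
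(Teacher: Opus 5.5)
Your proof of part (1) is correct and self-contained: Yoneda applies because the representing object $Y$ lies in $\Y$, and testing on $A[i]\in\Y$ detects quasi-isomorphisms. The paper simply cites Hanihara for this. The ``if'' half of (2) is also fine.

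\textbf{The gap is in the converse of (2).} From $\Y(-,X)\cong\Hom_{\Db}(-,\nu Y)|_{\Y}$ you conclude, ``by Yoneda on $\Y$'', that the isomorphism is induced by a morphism $X\to\nu Y$. The Yoneda lemma only describes natural transformations \emph{out of} a functor represented by an object of $\Y$. Here neither $X$ nor $\nu Y$ lies in $\Y$. Indeed $\nu Y$ is typically not a stalk complex (for a module $M$ one has $H^{-i}(\nu M)\cong D\Ext^i_A(M,A)$), and that is exactly the situation the lemma is about. Your parenthetical justification is therefore not an argument. The remedies in your last paragraph do not produce the morphism either:
\begin{itemize}
\item matching $\dim\Hom(A[i],-)$ only matches cohomology, and complexes with the same cohomology need not be isomorphic (compare $\nu M$ with $\bigoplus_i D\Ext^i_A(M,A)[i]$);
\item Lemma \ref{BZ} says nothing about lifting natural transformations.
\end{itemize}

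\textbf{How to repair it.} The simplest fix is the paper's route: dualize \emph{before} applying Yoneda. Serre duality gives $D\Y(-,X)\cong\Hom_{\Db}(\RHom_A(DA,X),-)|_{\Y}$ as $\Y^{\op}$-modules. So $\Y(-,X)$ is injective if and only if this functor is projective, i.e.\ isomorphic to $\Y(Y,-)$ for some $Y\in\Y$. Now the representable functor is the one given by $Y\in\Y$, so the covariant Yoneda lemma yields $g\colon\RHom_A(DA,X)\to Y$ inducing the isomorphism. Testing on $DA[i]\in\Y$, where $\Hom_{\Db}(W,DA[i])\cong DH^{-i}(W)$, shows that $g$ is a quasi-isomorphism. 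This is just your part (1) run over $A^{\op}$.

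Alternatively, you can keep your route by proving that $X\mapsto\Y(-,X)$ is full:
\begin{itemize}
\item Represent $X$ by a bounded complex of injectives. Then every map $M[j]\to X$ factors through $ZX$, so $\Y(-,BX)\to\Y(-,ZX)\to\Y(-,X)\to0$ is a projective presentation.
\item The exact sequence $\Hom(X,X')\to\Hom(ZX,X')\to\Hom(BX,X')$, coming from the triangle $BX\to ZX\to X\to BX[1]$, then shows that every morphism $\Y(-,X)\to\Y(-,X')$ is of the form $\Y(-,b)$ for some $b\colon X\to X'$.
\end{itemize}
With fullness in hand, your argument goes through as written.
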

\begin{proof}
	(1)  This is \cite[Lemma 3.5]{Ha}.
	
	(2)  We have $D\Y(-,X)=\Y(\RHom_A(DA,X),-)$ by Serre duality. Then the claim follows by applying (1) to $A^\op$.
\end{proof}

Following \cite[Section 3.1]{Ha}, we construct a projective resolution of each injective $\Y$-module.
Let $M\in\mod A$ be an indecomposable module, say, of projective dimension $n$. The injective $\Y$-module corresponding to $M$ is $D\Y(M,-)=\Y(-,M\lotimes_ADA)$ by Serre duality. If $0\to P_n\to\cdots P_1\to P_0\to M\to 0$ is the minimal projective resolution, the complex $M\lotimes_ADA$ is presented by the complex
\[ X:=\left( \xymatrix{ \nu P_n\ar[r]&\cdots\ar[r]&\nu P_1\ar[r]& \nu P_0 }\right)  \]
of injective modules. By \cite[Proposition 3.1]{Ha}, the sequence $BX\to ZX\to X\to BX[1]$ in $\Db(\mod A)$ induces a (potentially non-minimal) projective resolution  %$0\to\Y(-,BX)\to\Y(-,ZX)\to \Y(-,X)\to 0$ of $\Y(-,X)=D\Y(M,-)$.
%Note that we have $Z^0X=\nu P_0$, $Z^{-i}X=\tau_iM$, the $i$-Auslander--Reiten translation for $1\leq i\leq n$. Putting $\tau_0M:=\nu P_0$, the above resolution becomes
\begin{equation}\label{resolution}
\xymatrix{ 0\ar[r]& \disoplus_{i=0}^{n-1}\Y(-,B^{-i}X[i])\ar[r]^-\del&\disoplus_{i=0}^n\Y(-,Z^{-i}X[i])\ar[r]&D\Y(M,-)\ar[r]& 0 }.
\end{equation}
By Lemma \ref{BZ}, the first map $\del$ has components
\[ \xymatrix{ B^{-i}X\ar[r]& Z^{-i}X\oplus Z^{-i-1}X[1] } \]
%\[ \xymatrix@R=1mm{
%	&Z^{-i}X\\
%	B^{-i}X\ar[ur]\ar[dr]& \oplus\\
%	&Z^{-i-1}X[1] }  \]
for each $0\leq i\leq n-1$. Note also that $Z^0X=\nu P_0$ and $Z^{-i}X=\tau_iM$, the $i$-Auslander--Reiten translation.

Although we do not know the minimal projective resolution of the injective module, we have the following information. Recall that we write $L=L_\ij\oplus L_\ninj$ the decomposition of $L\in\mod A$ into injectives and non-injectives.
\begin{lemma}\label{B^0}
Let $M\in\mod A$ be an indecomposable module.
\begin{enumerate}
\item\label{ninj} The syzygy of $D\Y(M,-)$ contains $\Y(-,(B^0X)_\ninj)$ as a direct summand.
\item\label{ij} The following are equivalent.
\begin{enumerate}
\renewcommand{\labelenumii}{(\roman{enumii})}
\item $(B^0X)_\ninj=0$.
\item $M$ is projective or $\Hom_A(M,A)=0$.
\end{enumerate}
\end{enumerate}
\end{lemma}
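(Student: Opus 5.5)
The plan is to read off both statements from the resolution \eqref{resolution} and the exact sequence $0\to\tau M\to\nu P_1\to\nu P_0\to\nu M\to0$. Here $X$ is concentrated in degrees $-n,\dots,0$, so $Z^0X=\nu P_0$ and $B^0X=\Im(\nu P_1\to\nu P_0)$. This gives short exact sequences
\[ 0\to\tau M\to\nu P_1\to B^0X\to 0,\qquad 0\to B^0X\to\nu P_0\to\nu M\to0. \]
I would prove (2) first, since it is elementary, and then (1), which needs the minimality argument.

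For (2), the implication (ii)$\Rightarrow$(i) is immediate. If $M$ is projective, then $P_1=0$ and so $B^0X=0$. If $\Hom_A(M,A)=0$, then $\nu M=0$, so $B^0X\cong\nu P_0$ is injective. For the converse, assume $B^0X$ is injective. The surjection $\nu P_1\to B^0X$ then splits, so $B^0X\cong\nu Q$ for a direct summand $Q$ of $P_1$. The inclusion $B^0X\to\nu P_0$ also splits, since its source is injective. Hence the component $\nu Q\to\nu P_0$ of $\nu(P_1\to P_0)$ is a split monomorphism. Because $\nu$ is an equivalence $\proj A\to\inj A$, the restriction $Q\to P_0$ of the differential $P_1\to P_0$ is a split monomorphism. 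Minimality of the presentation forces this differential to be radical, so $Q=0$ and therefore $B^0X=0$. Then $\nu P_1=\tau M$, and the map $\nu P_1\to\nu P_0$ is zero; hence $P_1\to P_0$ is zero, so $P_1=0$ by minimality. In that case $M$ is projective. Note that if $B^0X=0$ and $M$ is non-projective, we get $\nu M\cong\nu P_0\neq0$, which shows the two cases in (ii) are genuinely distinct.

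For (1), I would use the standard fact about projective resolutions $0\to\mathbf P_1\xrightarrow{\del}\mathbf P_0\to I\to0$ in the Krull--Schmidt category $\mod\Y$. The minimal syzygy of $I$ is a direct summand of $\mathbf P_1$, and its complement is precisely a summand on which $\del$ is a split monomorphism. So it suffices to show the following: for each indecomposable summand $L$ of $(B^0X)_\ninj$, the composite of $\Y(-,L)\to\mathbf P_1$ with $\del$ is radical, and in particular never part of a split mono. I expect this to require the most care, because one has to rule out splitting not just summand by summand but jointly with other summands. The clean way is to show that $\del$ restricted to $\Y(-,(B^0X)_\ninj)$ lands in the radical of $\mathbf P_0$.

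By Lemma \ref{BZ} and Yoneda, this restriction is induced by $\begin{pmatrix}\iota\\-\del_0\end{pmatrix}\colon L\to \nu P_0\oplus\tau M[1]$. Suppose this had a retraction in $\Y$. Its component $\tau M[1]\to L$ lies in $\Hom_{\Db}(\tau M[1],L)=\Ext^{-1}_A(\tau M,L)=0$. Hence the retraction factors through $\nu P_0$, and $\iota\colon L\to\nu P_0$ would be split mono, making $L$ injective, which is a contradiction. To pass from ``no retraction'' to ``radical'' (and thereby handle all summands simultaneously), I would run the same argument for an arbitrary morphism $\Y(-,\mathbf P_0)\to\Y(-,L)$, i.e.\ for each $g\colon\nu P_0\oplus\tau M[1]\to L$. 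The component from $\tau M[1]$ again vanishes, and $g|_{\nu P_0}\circ\iota\in\End(L)$ is non-invertible, because $L$ is indecomposable non-injective and cannot be a summand of an injective. So it lies in $\rad\End(L)$, since $\End(L)$ is local. This shows $\del|_{\Y(-,(B^0X)_\ninj)}$ is a radical map, hence it survives in the minimal resolution and $\Y(-,(B^0X)_\ninj)$ is a direct summand of the syzygy of $D\Y(M,-)$.
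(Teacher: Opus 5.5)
Your part (1) is correct and is the paper's argument in more detail. The restriction of $\delta$ to $\Y(-,(B^0X)_\ninj)$ is radical for two reasons: its component into $\nu P_0$ goes from a non-injective module to an injective one, and every map from $\tau M[1]$ back to $L$ vanishes. Radicality can also be checked summand by summand. The implication (ii)$\Rightarrow$(i) of part (2) is fine as well.

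The gap is in (i)$\Rightarrow$(ii). Your first step claims that the surjection $\nu P_1\to B^0X$ splits because $B^0X$ is injective, and this is false. Monomorphisms \emph{out of} injectives split, but epimorphisms \emph{onto} injectives need not; here the kernel is $\tau M$, which is usually not injective. If your argument worked, it would show that $B^0X$ injective forces $B^0X=0$ and hence $M$ projective. That contradicts your own proof of (ii)$\Rightarrow$(i): for an indecomposable non-projective $M$ with $\Hom_A(M,A)=0$ you showed $B^0X\cong\nu P_0\neq 0$, which is injective. A concrete case is $A=K(1\to 2)$ with $M$ the simple injective non-projective module. There $\nu P_1\to\nu P_0$ is the non-split surjection from the projective-injective module of length $2$ onto $M$. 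Your closing remark about $B^0X=0$ with $M$ non-projective describes a case that cannot occur: if $B^0X=0$ then $\nu(P_1\to P_0)=0$, and minimality gives $P_1=0$. You also never use that $M$ is indecomposable, and the statement fails without it. In the same example, $M\oplus S$ with $S$ the simple projective has $B^0X\cong M$ injective, yet satisfies neither condition in (ii).

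To repair it, use only the splitting of the monomorphism $B^0X\hookrightarrow \nu P_0$. Write $\nu P_0=B^0X\oplus C$; then $\nu(d)$, with $d\colon P_1\to P_0$ the differential, has zero component into $C$. Transport this along the equivalence $\nu\colon\proj A\to\inj A$. Then $d$ has zero component into the summand $\nu^{-1}C$ of $P_0=\nu^{-1}B^0X\oplus\nu^{-1}C$, so $M\cong\Coker(P_1\to\nu^{-1}B^0X)\oplus\nu^{-1}C$. Since $M$ is indecomposable, there are two cases. Either $C=0$, so $\nu(d)$ is surjective and $D\Hom_A(M,A)=\nu M=\Coker\nu(d)=0$. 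Or $M\cong\nu^{-1}C$ is projective. This is exactly the paper's argument.
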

\begin{proof}
	(1)  Along the decomposition $B^0X=(B^0X)_\ij\oplus (B^0X)_\ninj$, the map $B^0X\to \nu P_0\oplus \tau M[1]$ decomposes into a direct sum of the identity of $(B^0X)_\ij$ and $(B^0X)_\ninj\to (\nu P_0/(B^0X)_\ij)\oplus \tau M[1]$. Note that this is a radical map since the first factor is a map between non-injective modules and injective modules, and the second factor has degree $1$. It follows that the map $\delta$ in \eqref{resolution} restricted to $\Y(-,(B^0X)_\ninj)$ is a radical map, so the assertion follows.
	
	(2)  Suppose that $(B^0X)_\ninj=0$, in other words, the module $B^0X$ is injective. Then the map $f\colon \nu P_1\to \nu P_0$ is a direct sum of $\nu P_1\to B^0X$ and $0\to \nu P_0/B^0X$. By the minimality of the presentation $P_1\to P_0$ of $M$, we must have either $M$ is projective or $f$ is surjective. Since $\Coker f=\nu M=D\Hom_A(M,A)$, the latter case yields $\Hom_A(M,A)=0$.
	
	Suppose conversely that $M$ is projective or $\Hom_A(M,A)=0$. In the first case, we have $B^0X=0$. If $\Hom_A(M,A)=0$, then the map $\nu P_1\to \nu P_0$ is surjective, so $B^0X=\nu P_0$ is injective.
\end{proof}

We will regard $\tau_0M=\nu P_0$ and $B^{-n}X=0$ for $n=\pdim M$.
\begin{lemma}\label{tau_m}
Let $M\in\mod A$ be an indecomposable non-projective module, and put $m:=\min\{ i>0\mid \Ext^i_A(M,A)\neq0\}$, the reduced grade of $M$.
\begin{enumerate}
\item\label{proj} The projective cover of $D\Y(M,-)$ contains $\Y(-,\tau_mM[m])$ as a direct summand.
\item\label{syz} The syzygy of $D\Y(M,-)$ contains $\Y(-,B^{-m}X[m])$ as a direct summand.
\end{enumerate}
\end{lemma}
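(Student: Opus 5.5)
We work with the projective resolution \eqref{resolution} of $D\Y(M,-)$ and identify exactly which components of $\del$ fail to be radical maps; the pieces that survive after cancelling split summands give the claimed summands of the projective cover and of the syzygy. Recall that $\del$ has components $B^{-i}X\to Z^{-i}X\oplus Z^{-i-1}X[1]$, namely $\iota$ and $-\del_{i}$ (Lemma \ref{BZ}). Here $\del_{i}$ is the extension class of $0\to Z^{-i-1}X\to X^{-i}=\nu P_i\to B^{-i}X\to0$. A component of $\del$ between summands of different shifts is always radical in $\Y$. Hence the only possibly non-radical pieces are the inclusions $\iota\colon B^{-i}X\to Z^{-i}X$ for $0\le i\le n-1$.

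The key observation concerns the cohomology of $X$. Since $H^{-i}(X)=Z^{-i}X/B^{-i}X$ and $X$ computes $M\lotimes_ADA$, we have $H^{-i}(X)\cong D\Ext^i_A(M,A)$. Hence $B^{-i}X=Z^{-i}X$ exactly when $\Ext^i_A(M,A)=0$. For $1\le i<m$ this holds, so $\iota$ is an isomorphism there. These summands $\Y(-,B^{-i}X[i])\xrightarrow{\sim}\Y(-,Z^{-i}X[i])$ cancel in the resolution; more precisely, we perform Gaussian elimination on these split components. This elimination changes the remaining components only by compositions with degree-one maps, which are radical.

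For (1), at degree $m$ we have $\Ext^m_A(M,A)\ne0$, so $B^{-m}X\subsetneq Z^{-m}X=\tau_mM$. Suppose first $m<n$. Since $\tau_mM$ is indecomposable (it is $\tau$ of an indecomposable non-projective module in the higher sense; we would verify this via the standard fact that $\tau_m M$ is indecomposable or zero for indecomposable non-projective $M$ of reduced grade $m$, using $\Om^{m-1}$ and Auslander--Reiten theory), the inclusion $B^{-m}X\to\tau_mM$ is a proper monomorphism into an indecomposable. It is therefore radical. If $m=n$, then $B^{-n}X=0$ and nothing maps into $\Y(-,\tau_nM[n])$ from the same shift at all. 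The summands mapping into $\Y(-,\tau_mM[m])$ are thus $\Y(-,B^{-m}X[m])$ through $\iota$, which is radical, and $\Y(-,B^{-m+1}X[m-1])$ through a degree-one map, which is also radical. The latter summand may have been modified by the elimination, but it stays of shift $m-1$. Hence after elimination the image of $\del$ lies in the radical on the summand $\Y(-,\tau_mM[m])$, and this summand survives in the projective cover.

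For (2), the summand $\Y(-,B^{-m}X[m])$ maps by $\iota$ to $\tau_mM[m]$, which is radical as just shown. It also maps by $-\del_m$ to $Z^{-m-1}X[m+1]$, which is radical since it raises degree. After the eliminations at degrees $<m$, which involve other summands only, its image still lies in the radical. So it remains a summand of the minimal syzygy, as in the proof of Lemma \ref{B^0}(1).

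The main obstacle is the indecomposability claim used in (1). If $\tau_mM$ were decomposable, a summand of it could be hit isomorphically by part of $B^{-m}X$. To handle this without indecomposability, we would argue directly instead: a non-radical component $B'\to\tau_mM$ from a summand $B'$ of $B^{-m}X$ must be a split mono onto a summand of $\tau_mM$, and we would show that this contradicts the minimality of the injective copresentation $\nu P_{m+1}\to\nu P_m$ coming from the minimal projective resolution.
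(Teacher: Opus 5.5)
Your proposal is correct and follows the paper's argument. Components of $\del$ between different shifts are radical, so the only candidate for an identity summand touching $\Y(-,\tau_mM[m])$ or $\Y(-,B^{-m}X[m])$ is the inclusion $B^{-m}X\hookrightarrow\tau_mM$; this inclusion is proper because $\Ext^m_A(M,A)\neq0$, and therefore radical because $\tau_mM$ is indecomposable.

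Three small remarks. The Gaussian elimination at degrees $1\le i<m$ is unnecessary, and for $m\ge2$ the shift-$(m-1)$ summand is actually cancelled rather than merely modified. The indecomposability of $\tau_mM$, which the paper simply asserts, follows as you sketch from $\underline{\End}_A(M)\cong\underline{\End}_A(\Om^{m-1}M)$ via Auslander--Bridger. Consequently your vaguer fallback via minimality of $\nu P_{m+1}\to\nu P_m$ is not needed.
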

\begin{proof}
	If a part of the summand $\Y(-,\tau_mM[m])$ may be removed from the resolution \eqref{resolution}, a component of $\delta$ has to contain an identity summand, and it is necessarily $B^{-m}X[m]\to \tau_mM[m]$ since anything else is a radical map.
	Similarly, if a part of the summand $\Y(-,B^{-m}X[m])$ may be removed from the resolution \eqref{resolution}, the map $B^{-m}X\to \tau_mM$ has to contain an identity summand.
	But this is impossible since $\tau_mM$ is indecomposable.
\end{proof}

We are now ready to prove the main result of this section.
\begin{proof}[Proof of Theorem \ref{Yoneda-AG}]
(\ref{iff})  We prove that $\Y$ is Auslander--Gorenstein if $A$ is bispherical. Since each indecomposable module $M$ is spherical, the inclusion $B^{-i}X\to Z^{-i}X$ in the resolution above is the identity for $0<i\leq n-1$. It follows that the resolution \eqref{resolution} can be reduced to
\begin{equation}\label{close}
\xymatrix{ 0\ar[r]& \Y(-,B^0X)\ar[r]&\Y(-,\nu P_0)\oplus \Y(-,\tau_nM[n])\ar[r]& D\Y(M,-)\ar[r]& 0}.
\end{equation}
We now show that the middle term $\Y(-,\nu P_0)\oplus \Y(-,\tau_nM[n])$ is (projective-)injective.
Certainly, the module $\Y(-,\nu P_0)=D\Y(P_0,-)$ is injective. We prove that $\Y(-,\tau_nM)$ is injective. By \cref{pr-in}, we have to verify that $\RHom_A(DA,\tau_nM)$ is a stalk complex. Since $\tau_nM$ is indecomposable and cospherical, it suffices to show that $\Hom_A(DA,\tau_nM)=0$. Note that the first terms of the complex $X$ give an injective presentation of $\tau_nM$: $0\to \tau_nM\to \nu P_n\to \nu P_{n-1}$. Applying $\Hom_A(DA,-)$ we see that $\Hom_A(DA,\tau_nM)=0$ since $P_n\to P_{n-1}$ is injective. This shows that $\Y(-,\tau_nM)$ is injective, as desired.

We turn to the converse. We assume that $\Y$ is Auslander--Gorenstein and prove that each indecomposable $A$-module $M$ is spherical. Consider the projective resolution of the injective indecomposable $\Y$-module $D\Y(M,-)$ corresponding to $M$ as in \eqref{resolution}.
If $M$ is projective, then there is nothing to prove. Suppose that $M$ is non-projective and put
\[ m=\min\{i>0\mid \Ext^i_A(M,A)\neq0\}. \]
Since $\Y$ is Auslander--Gorenstein, the kernel of the projective cover of $D\Y(M,-)$ must be indecomposable, so Lemma \ref{B^0}(\ref{ninj}) and Lemma \ref{tau_m}(\ref{syz}) show that we have either $(B^0X)_\ninj=0$ or $B^{-m}X=0$.

In the second case, this means $m=\pdim M$, thus $M$ is spherical.
We turn to the first case. By Lemma \ref{B^0}(\ref{ij}), we have $\Hom_A(M,A)=0$.
Also, by Lemma \ref{tau_m}(\ref{proj}) and the Auslander--Gorenstein property of $\Y$, the projective $\Y$-module $\Y(-,\tau_mM)$ is injective, thus $\RHom_A(DA,\tau_mM)$ is a stalk complex by Lemma \ref{pr-in}(\ref{in}). Now, we have a complex
\[ \xymatrix{ \nu P_m\ar[r]& \nu P_{m-1}\ar[r]&\cdots\ar[r]& \nu P_1\ar[r]& \nu P_0 }, \]
which is an injective resolution of $\tau_mM$ by $\Tor_i^A(M,DA)=D\Ext^i_A(M,A)=0$ for $0\leq i\leq m-1$. Applying $\Hom_A(DA,-)$, we get a complex
\[ \RHom_A(DA,\tau_mM)=\left( \xymatrix{ P_m\ar[r]& P_{m-1}\ar[r]&\cdots\ar[r]& P_1\ar[r]& P_0 }\right).  \]
Now, this has a non-zero cohomology $M$ at the top, so for this to be a stalk complex, the map $P_m\to P_{m-1}$ has to be injective, in other words, $m=n$. This proves that $M$ is spherical.

We have seen that $\Y=\Y(A)$ being Auslander--Gorenstein implies $A$ is spherical. Since the Auslander--Gorenstein property is left-right symmetric, we see by working over $\Y^\op=\Y(A^\op)$ that $A^\op$ is also spherical. We conclude that $A$ is bispherical.

(\ref{resol})  The assertion for the case $M$ is projective is clear, so we assume that $M$ is non-projective. Since $M$ is spherical, the resolution in \eqref{resol} of $D\Y(M,-)$ simplifies to \eqref{close}. Moreover, as in Lemma \ref{B^0}, we may remove the identity summand on $(B^0X)_\ij$ from the first map of \eqref{close}, which results in the resolution
\begin{equation}\label{minimal}
	\xymatrix{ 0\ar[r]& \Y(-,(B^0X)_\ninj)\ar[r]&\Y(-,\nu P_0/(B^0X)_\ij)\oplus \Y(-,\tau_nM[n])\ar[r]& D\Y(M,-)\ar[r]& 0}.
\end{equation}
Since the first map is now a radical map, this resolution is minimal.

(\ref{ARbij})  By (\ref{resol}) it is enough to show that if $A$ is bispherical, then $B^0X$ is indecomposable non-injective whenever $M$ is indecomposable non-projective and $\Hom_A(M,A)\neq0$.
Let $P_1\to P_0\to M\to 0$ be the minimal projective presentation, so we have an exact sequence 
\[ \xymatrix{ 0\ar[r]&\tau M\ar[r]&\nu P_1\ar[r]& \nu P_0 }, \]
in which the second map is non-surjective by $\Hom_A(M,A)\neq0$, and has image $B^0X$. Since $A$ is cospherical and $\idim\tau M\geq2$, we have $\Ext^1_A(DA,\tau M)=0$, thus $B^0X=\Om^{-1}\tau M$ is indecomposable. By the minimality of $\nu P_1\to \nu P_0$, it has no injective summand. This proves the assertion.
\end{proof}

\section{Auslander--Reiten quivers of (co)spherical algebras}

Recall from \cref{thm::characterisationcospherical} that an algebra $A$ is cospherical if and only if the pair $(\T,\F)$ is a split torsion pair where $\T$ and $\F$ are defined by
\[
\begin{aligned}
\T&:=\{M\in\mod A\mid \Hom_A(M,A)=0\},\\
\F&:=\{M\in\mod A\mid M \text{ is a submodule of a projective module} \}.
\end{aligned}
\] 
The aim of this section is to show that this split torsion pair puts some heavy restrictions on the Auslander--Reiten quiver of $A$ if $A$ is representation-finite.
In fact, we show in Theorem \ref{theorem:directed} that such an algebra is directed.

\begin{lemma}\label{lemma:proj-inj}
    Every injective module in $\F$ is projective.
\end{lemma}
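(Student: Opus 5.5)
It suffices to treat an indecomposable injective module $I\in\F$, since both $\F$ and the class of projective modules are closed under direct summands. Because $I\in\F$, there is a monomorphism $I\hookrightarrow P$ with $P$ projective. The plan is to exploit the injectivity of $I$ directly, without any cosphericalness assumption on $A$.

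First, since $I$ is injective, the monomorphism $I\hookrightarrow P$ splits. Hence $I$ is isomorphic to a direct summand of $P$.

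Second, a direct summand of a projective module is projective, so $I$ is projective.

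No real obstacle is expected. The only care needed is that $\F$ is defined by being a submodule of some projective module, which is exactly the embedding used above. Note that the lemma holds for arbitrary finite dimensional $A$, without using any earlier result of the paper.
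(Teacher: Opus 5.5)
Your proof is correct and is the same as the paper's: the embedding $I\hookrightarrow P$ splits because $I$ is injective, so $I$ is a direct summand of $P$ and hence projective. Your reduction to indecomposable $I$ is harmless but unnecessary, since the splitting argument works for any injective $I\in\F$.
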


\begin{proof}
    Assume $I\in\F$ is injective. By definition we obtain a monomorphism $I\hookrightarrow P$ with $P$ projective. By definition of an injective object, this inclusion splits.
    % https://q.uiver.app/#q=WzAsMyxbMCwwLCJJIl0sWzEsMCwiUCJdLFswLDEsIkkiXSxbMCwxLCIiLDIseyJzdHlsZSI6eyJ0YWlsIjp7Im5hbWUiOiJob29rIiwic2lkZSI6InRvcCJ9fX1dLFswLDIsIlxcaWQiLDJdLFsxLDIsIiIsMix7InN0eWxlIjp7ImJvZHkiOnsibmFtZSI6ImRhc2hlZCJ9fX1dXQ==
    \[\begin{tikzcd}
    	I & P \\
    	I
    	\arrow[hook, from=1-1, to=1-2]
    	\arrow["\mathrm{id}"', from=1-1, to=2-1]
    	\arrow[dashed, from=1-2, to=2-1]
    \end{tikzcd}\]
    Hence $I$ is a direct summand of $P$ and therefore projective.
\end{proof}

\begin{definition}
Let $A$ be a finite dimensional algebra. A {\it path of length $n\geq 0$} in $\mod{A}$ is a finite sequence 
\[X=(X_0,X_1,\ldots,X_n)\]
of indecomposable $A$-modules $X_i$ for all $i$ such that $\rad(X_{i-1},X_i)\neq 0$ for $1\leq i\leq n$. If $X_0\cong X_n$ and $n \geq 1$ we call $X$ a cycle in $\mod{A}$. We say that $A$ is {\it directed} if there are no cycles in $\mod{A}$. 
For a representation-finite algebra $A$, being directed is equivalent to the Auslander--Reiten quiver $\AR(A)$ being acyclic, that is having no oriented cycles. 
\end{definition}

It is well known that directed algebras are representation-finite with acyclic quiver (and thus finite global dimension), see for example \cite[Chapter IX.]{ASS}. However, a representation-finite quiver algebra $KQ/I$ with $Q$ acyclic and admissible $I$ does not have to be directed as the next example shows.
\begin{example}
Let $A=KQ/I$ with 
$Q=$
% https://q.uiver.app/#q=WzAsMyxbMCwwLCIxIl0sWzEsMCwiMiJdLFsxLDEsIjMiXSxbMCwxLCJhIl0sWzEsMiwiYiJdLFswLDIsImMiLDJdXQ==
\[\begin{tikzcd}
	1 & 2 \\
	& 3
	\arrow["a", from=1-1, to=1-2]
	\arrow["c"', from=1-1, to=2-2]
	\arrow["b", from=1-2, to=2-2]
\end{tikzcd}\]
and $I=\langle ab \rangle.$
Then $A$ is representation-finite with $9$ indecomposable modules.
We have $\tau^2(S_2) \cong S_2$ and thus $A$ has no preprojective component and thus $A$ is not directed.
The algebra is neither spherical nor cospherical.
\end{example}

To prove Theorem \ref{theorem:directed}, we give the following intermediate step.
\begin{proposition}\label{theorem:tau-orbits}
    Assume $A$ is cospherical and representation-finite. Then there are no cyclic $\tau$-orbits.
\end{proposition}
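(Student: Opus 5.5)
Suppose for contradiction that there is an indecomposable $M$ with $\tau^k M\cong M$ for some $k\geq 1$. Then every module $\tau^i M$ in the orbit is non-projective and non-injective, since $\tau^{-1}\tau^i M\neq 0$. By Theorem \ref{thm::characterisationcospherical}, $(\T,\F)$ is a split torsion pair, so each $\tau^iM$ lies in $\T$ or in $\F$. The plan is to rule out both cases.

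\emph{The orbit cannot meet $\T$.} Take $N=\tau^iM$ in $\T$. As computed in the proof of Theorem \ref{thm::characterisationcospherical}, $\Hom_A(N,A)=0$ is equivalent to $\idim\tau N\leq 1$. Since $\tau N$ lies in the orbit it is not injective, so $\idim \tau N=1$. Now I would use that $A$ has finite global dimension and that the orbit is finite. Then $\idim$ is bounded on the orbit, and I would track it along the orbit with the formula $\Ext^j_A(DA,\tau X)\cong D\Ext^j_{A^\op}(\Tr X,A)$. An alternative route uses Auslander--Reiten formulas: $\idim \tau N\leq 1$ gives $\Hom_A(\tau^{-1}DA,\tau N)=0$, i.e.\ $\Hom_A(\tau^{-1}DA,-)$ vanishes on part of the orbit.

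Simpler, I would rather exploit $\T$ being a torsion class closed under quotients together with an AR-sequence argument. If $N\in\T$, then the middle term $E$ of $0\to\tau N\to E\to N\to 0$ maps onto $N$. It remains to see that $\tau N$ also lies in $\T$: a nonzero map $\tau N\to A$ would lead, by the splitting, to $\tau N\in\F$. Iterating the orbit gives a cycle $\tau^kM\to\cdots\to M$ of irreducible maps in $\mod A$ mixing $\T$ and $\F$. But a nonzero map from a $\T$-module to an $\F$-module is impossible, so along any path the modules move from $\F$ into $\T$ and never back. A cyclic $\tau$-orbit yields cycles through the meshes, hence (representation-finite, so all paths are finite sums of irreducibles) all modules on such a cycle lie in the same class.

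\emph{The orbit cannot lie in $\F$.} If the whole orbit lies in $\F$, Proposition \ref{prop::cosphericalcharacterisation} shows that every $\tau^iM$ is $(n_i-1)$-torsion-free with $n_i=\idim\tau^{i+1}M\geq 2$. In particular $\Ext^1_A(DA,\tau^{i}M)=0$ for all $i$, and Corollary \ref{coro::Ausladnerlemma} gives $\Omega^{-1}\tau^iM\cong N_i\oplus I_i$ with $N_i$ indecomposable. The identity $\Omega^{-1}\tau X=\Omega\nu X$ then lets me compare injective dimensions: $\idim N_i=\idim\tau^iM-1$. I would aim to show $N_i\cong\tau^{i-1}M$ up to a bounded shift, so that $\idim$ strictly decreases along the orbit. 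That contradicts periodicity.

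If the whole orbit lies in $\T$, then $\idim\tau^iM\leq 1$ for all $i$, so $\tau^iM$ has an injective copresentation $0\to\tau^iM\to I^0\to I^1\to0$. Since $\Hom_A(\tau^{-1}\tau^iM,A)=0$, the module $\tau^{i-1}M$ has no projective summand maps. I expect this case to be closed via the dual AR formula $\Ext^1(\tau^{-1}X,-)$ and the fact that a periodic orbit is a stable tube-like component. That component would then be finite and contain no projectives, which contradicts Auslander's theorem (a connected representation-finite algebra has a single component containing projectives).

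I expect the main obstacle to be the $\F$ case, namely pinning down the relation between $N_i$ and the orbit so that injective dimension is forced to drop monotonically. Actually, Auslander's theorem handles both cases at once: a cyclic $\tau$-orbit in a connected representation-finite algebra sits in the unique component, which contains projectives. So it suffices to show that a path from a projective $P$ to the orbit contradicts the splitting. I would try this first: $P\in\F$, so it suffices to push the $\F$/$\T$ dichotomy through irreducible maps.
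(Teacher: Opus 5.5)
\textbf{There is a genuine gap: only the reduction to ``the orbit lies in $\T$ or in $\F$'' is correct, and neither case is then closed.}

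\textbf{What is correct.} Your reduction is exactly the paper's first step. Arrows of $\AR(A)$ are nonzero maps and $\Hom_A(\T,\F)=0$, so a path never passes from $\T$ to $\F$. A cyclic $\tau$-orbit lies on cycles through its meshes, so it is contained entirely in $\T$ or entirely in $\F$.

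\textbf{The $\T$ case.} Your claim that a periodic orbit forms a stable component without projectives is false. The example with $\tau^2(S_2)\cong S_2$ given before this proposition has a cyclic orbit in the unique, projective-containing component of a connected representation-finite algebra. Moreover, a path from a projective $P\in\F$ into $\T$ is not contradictory by itself. The missing mechanism is backward propagation through meshes. Take a path $(P,X_1,\ldots,X_{n-1},M)$ with $P$ indecomposable projective.
\begin{itemize}
\item The arrow $X_{n-1}\to M$ gives an arrow $\tau M\to X_{n-1}$, so $X_{n-1}\notin\F$.
\item Hence $X_{n-1}\in\T$ is non-projective, and there is an arrow $\tau X_{n-1}\to\tau M$.
\item Going around the whole orbit of $M$ gives $\tau^mX_{n-1}\in\T$ for all $m$. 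By representation-finiteness, $X_{n-1}$ has a cyclic orbit in $\T$.
\item Inducting down the path yields an arrow $\tau X_1\to P$ from $\T$ to $\F$, which is the contradiction.
\end{itemize}
Your last paragraph gestures in this direction, but this is the step that actually has to be carried out.

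\textbf{The $\F$ case is the serious gap.} The hoped-for identification $N_i\cong\tau^{i-1}M$ has no basis, since there is no general relation between $\Omega^{-1}\tau X$ and $X$. Your closing claim that paths from projectives handle both cases at once cannot be right either. Inside $\F$ such paths are harmless. In fact the splitting alone can never exclude cyclic orbits in $\F$: for $A=K[x]/(x^2)$ one has $(\T,\F)=(0,\mod A)$, which is trivially split, yet $\tau S\cong S$.

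So $\gldim A<\infty$ must be used. The paper uses it by propagating forward instead. Let $M$ have a cyclic orbit in $\F$, and take a path $(M,Y_1,\ldots,I)$ to an indecomposable summand $I$ of the injective envelope $I_M$.
\begin{itemize}
\item The arrow $Y_1\to\tau^{-1}M$ forces $Y_1\in\F$, and likewise every defined $\tau^{-m}Y_1$ lies in $\F$.
\item So either $Y_1$ is projective-injective (by Lemma \ref{lemma:proj-inj}, with $M=\rad Y_1$ and $Y_2=\tau^{-1}M$), or $Y_1$ has a cyclic orbit in $\F$.
\item Inductively $I\in\F$, so $I$ is projective-injective.
\item Hence every summand of $\Omega^{-1}M$ is projective-injective or has a cyclic orbit in $\F$. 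At least one summand is of the latter kind, since otherwise $0\to M\to I_M\to\Omega^{-1}M\to0$ splits and $M$ is injective.
\item Iterating gives $\idim M=\infty$, contradicting $\gldim A<\infty$.
\end{itemize}
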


\begin{proof}
    Assume there was a cyclic $\tau$-orbit in $\AR(A)$, that is a sequence 
    \[\tau^{\bullet}M=(M,\tau M,\ldots, \tau^n M,\ldots)\] 
    of indecomposable $A$-modules such that $M\cong \tau^nM$. We first observe that such a sequence would have to be completely contained in $\F$ or $\T$. Otherwise there would be a morphism from $\T$ to $\F$.
    % https://q.uiver.app/#q=WzAsMTEsWzcsMSwiTSJdLFs2LDAsIlhfMSJdLFs4LDAsIlhfMCJdLFs1LDEsIlxcdGF1XnsxfU0iXSxbOSwxLCJcXHRhdV57bi0xfU0iXSxbNCwwLCJYXzIiXSxbMywxLCJcXGNkb3RzIl0sWzIsMCwiWF97bi0xfSJdLFsxLDEsIlxcdGF1XntuLTF9TSJdLFsxMCwwLCJYX3tuLTF9Il0sWzAsMCwiWF8wIl0sWzAsMywiIiwwLHsic3R5bGUiOnsiYm9keSI6eyJuYW1lIjoiZGFzaGVkIn19fV0sWzQsMCwiIiwwLHsic3R5bGUiOnsiYm9keSI6eyJuYW1lIjoiZGFzaGVkIn19fV0sWzEsMF0sWzAsMl0sWzIsNF0sWzMsMV0sWzUsM10sWzYsNV0sWzgsN10sWzcsNl0sWzYsOCwiIiwwLHsic3R5bGUiOnsiYm9keSI6eyJuYW1lIjoiZGFzaGVkIn19fV0sWzMsNiwiIiwwLHsic3R5bGUiOnsiYm9keSI6eyJuYW1lIjoiZGFzaGVkIn19fV0sWzQsOV0sWzEwLDhdXQ==
    \[\begin{tikzcd}[column sep = small, row sep = small]
    	{X_0} && {X_{n-1}} && {X_2} && {X_1} && {X_0} && {X_{n-1}} \\
    	& {\tau^{n-1}M} && \cdots && {\tau^{1}M} && M && {\tau^{n-1}M}
    	\arrow[from=1-1, to=2-2]
    	\arrow[from=1-3, to=2-4]
    	\arrow[from=1-5, to=2-6]
    	\arrow[from=1-7, to=2-8]
    	\arrow[from=1-9, to=2-10]
    	\arrow[from=2-2, to=1-3]
    	\arrow[from=2-4, to=1-5]
    	\arrow[dashed, from=2-4, to=2-2]
    	\arrow[from=2-6, to=1-7]
    	\arrow[dashed, from=2-6, to=2-4]
    	\arrow[from=2-8, to=1-9]
    	\arrow[dashed, from=2-8, to=2-6]
    	\arrow[from=2-10, to=1-11]
    	\arrow[dashed, from=2-10, to=2-8]
    \end{tikzcd}\]

    Assume first that $\tau^{\bullet}M$ was contained in $\T$. Since projective covers exist in $\mod{A}$ and $A$ is representation-finite there exists a path $(P,X_1,X_2,\ldots ,X_{n-1},M)$ from an indecomposable projective $P$ to $M$ in $\AR(A)$. Since there is an arrow $X_{n-1}\to M$ in $\AR(A)$, there exists an arrow $\tau M\to X_{n-1}$. Thus $X_{n-1}$ can not be in $\F$ since $\Hom(\T,\F)=0$, in particular $X_{n-1}$ is not projective. It follows that there is an arrow $\tau X_{n-1}\to \tau M$ in $\AR(A)$. Continuing in this manner, we obtain the following picture in $\AR(A)$ which continues indefinitely.
    % https://q.uiver.app/#q=WzAsOCxbNywxLCJNIl0sWzYsMCwiWF97bi0xfSJdLFs1LDEsIlxcdGF1XnsxfU0iXSxbNCwwLCJcXHRhdSBYX3tuLTF9Il0sWzMsMSwiXFx0YXVeMiBNIl0sWzIsMCwiXFx0YXVeMiBYX3tuLTF9Il0sWzEsMSwiXFxsZG90cyJdLFswLDAsIlxcbGRvdHMiXSxbMCwyLCIiLDAseyJzdHlsZSI6eyJib2R5Ijp7Im5hbWUiOiJkYXNoZWQifX19XSxbMSwwXSxbMiwxXSxbMywyXSxbNCwzXSxbNiw1XSxbNSw0XSxbNCw2LCIiLDAseyJzdHlsZSI6eyJib2R5Ijp7Im5hbWUiOiJkYXNoZWQifX19XSxbMiw0LCIiLDAseyJzdHlsZSI6eyJib2R5Ijp7Im5hbWUiOiJkYXNoZWQifX19XSxbNyw2XSxbMSwzLCIiLDEseyJzdHlsZSI6eyJib2R5Ijp7Im5hbWUiOiJkYXNoZWQifX19XSxbMyw1LCIiLDEseyJzdHlsZSI6eyJib2R5Ijp7Im5hbWUiOiJkYXNoZWQifX19XSxbNSw3LCIiLDAseyJzdHlsZSI6eyJib2R5Ijp7Im5hbWUiOiJkYXNoZWQifX19XV0=
    \[\begin{tikzcd}[column sep = small, row sep = small]
    	\ldots && {\tau^2 X_{n-1}} && {\tau X_{n-1}} && {X_{n-1}} \\
    	& \ldots && {\tau^2 M} && {\tau^{1}M} && M
    	\arrow[from=1-1, to=2-2]
    	\arrow[dashed, from=1-3, to=1-1]
    	\arrow[from=1-3, to=2-4]
    	\arrow[dashed, from=1-5, to=1-3]
    	\arrow[from=1-5, to=2-6]
    	\arrow[dashed, from=1-7, to=1-5]
    	\arrow[from=1-7, to=2-8]
    	\arrow[from=2-2, to=1-3]
    	\arrow[from=2-4, to=1-5]
    	\arrow[dashed, from=2-4, to=2-2]
    	\arrow[from=2-6, to=1-7]
    	\arrow[dashed, from=2-6, to=2-4]
    	\arrow[dashed, from=2-8, to=2-6]
    \end{tikzcd}\]
    In particular the above argument shows $\tau^mX_{n-1}\in\T$ for all $m\geq 0$. Since $A$ is representation-finite, this means that the $\tau$-orbit $\tau^{\bullet}X_{n-1}$ is cyclic and completely contained in $\T$. Inductively this shows that $\tau^{\bullet}X_i\subseteq \T$ is cyclic for all $1\leq i\leq n-1$. However this implies that there exists an arrow $\tau X_1\to P$ in $\AR(A)$, a contradiction to the definition of $\T$. Thus there are no cyclic $\tau$-orbits in $\T$.

    Assume now that $\tau^{\bullet}M$ was contained in $\F$. Let $I$ be an indecomposable direct summand of the injective envelope $I_M$ of $M$. As above by representation-finiteness there exists a path $(M,Y_1,\ldots,Y_{n-1},I)$ in $\AR(A)$. First observe that, whenever $\tau^{-m}Y_1$ is defined it can not be in $\T$. Otherwise $\tau^{-m}Y_1\to\tau^{-(m+1)}M$ is a morphism from $\T$ to $\F$. Thus $\tau^{-m}Y_1\in\F$ if it is defined. If $\tau^{-m}Y_1$ is injective for some $m$, it is projective by \cref{lemma:proj-inj} which means already $Y_1$ was projective-injective. In that case we have the following Auslander--Reiten sequence for $Y_1$.
    \[\begin{tikzcd}[column sep = small, row sep = small]
    	& {Y_1} \\
    	{\rad(Y_1)} && {Y_1/\soc(Y_1)} \\
    	& {\rad(Y_1)/\soc(Y_1)}
    	\arrow[two heads, from=1-2, to=2-3]
    	\arrow[hook, from=2-1, to=1-2]
    	\arrow[two heads, from=2-1, to=3-2]
    	\arrow[dashed, from=2-3, to=2-1]
    	\arrow[hook, from=3-2, to=2-3]
    \end{tikzcd}\]
    
    %\[ \xymatrix@R=1mm@C=7mm{
    %&Y\ar@{->>}[dr]&\\
    %\rad(Y_1)\ar@{^(->}[ur]\ar@{->>}[dr]&& Y_1/\soc(Y_1)\\
    %&\rad(Y_1)/\soc(Y_1)\ar@{^(->}[ur]& } \]
    In particular $M=\rad(Y_1)$ and $Y_2=Y_1/\soc(Y_1)=\tau^{-1}M$ has a cyclic $\tau$-orbit. If $Y_1$ is not projective-injective, then by the above considerations $\tau^{-m}Y_1$ is defined for all $m\geq 0$ and representation-finiteness implies that $Y_1$ has a cyclic $\tau$-orbit. In both cases we can continue inductively to obtain that $I\in\F$. Since we can do this for every indecomposable summand of $I_M$, \cref{lemma:proj-inj} yields that every indecomposable module $M$ that has a cyclic $\tau$-orbit that is completely contained in $\F$ has a projective injective envelope. Moreover, the above argument yields that paths starting in these projective-injectives can only reach other projective-injectives or modules with cyclic $\tau$-orbits that are contained in $\F$, since they have to pass through the cyclic $\tau$-orbit of their radical. This implies that the cokernel of the injective envelope $\Omega^{-1}M\coloneqq\coker(M\hookrightarrow I_M)$ is a direct sum of projective-injective modules and modules with cyclic $\tau$-orbits, that is of the form 
    \[\Omega^{-1}M=\bigoplus_{i=1}^p I(i)\oplus \bigoplus_{j=1}^qC(j) \; ,\quad I(i) \text{ projective-injective}, \; \tau^{\bullet}C(j) \text{ cyclic}.\]
    If $q=0$, then $M$ is a direct summand of $I_M$ and hence $M$ would be projective-injective,  a contradiction to $\tau^{\bullet}M$ being cyclic. Thus $q\neq 0$. By induction it follows that $\idim M =\infty$, a contradiction to $\gldim A<\infty$.    
\end{proof}

One has the following example of an algebra whose AR-quiver has no cyclic $\tau$-orbits but the algebra is not directed. Therefore, one can not use \cref{theorem:tau-orbits} directly to prove \cref{theorem:directed}.
    %The following example shows that one can not use \cref{theorem:tau-orbits} directly to prove \cref{theorem:directed}. 
\begin{example}\label{example::nondiectedalgerbawithoutcyclictauorbit}
    %The following is an example of an acyclic representation-finite algebra of global dimension $3$ which is not directed, i.e. there is a cycle in the AR-quiver but there are no cyclic $\tau$-orbits in the AR-quiver. 
    Let $A=KQ/I$ for
    % https://q.uiver.app/#q=WzAsNCxbMSwwLCIxIl0sWzAsMSwiMiJdLFsyLDEsIjMiXSxbNCwxLCI0Il0sWzAsMSwiYSIsMl0sWzEsMiwiYiJdLFsyLDAsImMiLDJdLFsyLDMsImQiXV0=
    \[Q= \begin{tikzcd}[column sep = small, row sep = small]
    	& 1 \\
    	2 && 3 && 4
    	\arrow["a"', from=1-2, to=2-1]
    	\arrow["b", from=2-1, to=2-3]
    	\arrow["c"', from=2-3, to=1-2]
    	\arrow["d", from=2-3, to=2-5]
    \end{tikzcd}, \quad I=(bc,ca,bd).\]
    $A$ has global dimension $3$ and the AR-quiver looks as follows
    % https://q.uiver.app/#q=WzAsMTEsWzAsMSwiXFxwdXJwbGV7MX0iXSxbMSwyLCJcXHB1cnBsZXszXFxcXDFcXDs0fSJdLFswLDMsIlxccHVycGxlezR9Il0sWzIsMSwiM1xcXFw0Il0sWzIsMywiM1xcXFwxIl0sWzMsMiwiXFxwdXJwbGV7M30iXSxbNCwxLCJcXHB1cnBsZXsyXFxcXDN9Il0sWzUsMCwiXFxwdXJwbGV7MVxcXFwyXFxcXDN9Il0sWzYsMSwiMVxcXFwyIl0sWzUsMiwiXFxibHVlezJ9Il0sWzcsMiwiXFxwdXJwbGV7MX0iXSxbMCwxXSxbMSwzXSxbMyw1XSxbMSw0XSxbNCw1XSxbMiwxXSxbNSw2XSxbNiw5XSxbNiw3XSxbNyw4XSxbOCwxMF0sWzksOF0sWzMsMCwiIiwxLHsic3R5bGUiOnsiYm9keSI6eyJuYW1lIjoiZGFzaGVkIn19fV0sWzUsMSwiIiwxLHsic3R5bGUiOnsiYm9keSI6eyJuYW1lIjoiZGFzaGVkIn19fV0sWzQsMiwiIiwxLHsic3R5bGUiOnsiYm9keSI6eyJuYW1lIjoiZGFzaGVkIn19fV0sWzksNSwiIiwxLHsic3R5bGUiOnsiYm9keSI6eyJuYW1lIjoiZGFzaGVkIn19fV0sWzgsNiwiIiwxLHsic3R5bGUiOnsiYm9keSI6eyJuYW1lIjoiZGFzaGVkIn19fV0sWzEwLDksIiIsMSx7InN0eWxlIjp7ImJvZHkiOnsibmFtZSI6ImRhc2hlZCJ9fX1dXQ==
    \[\begin{tikzcd}[column sep = small, row sep = small]
    	&&&&& \color{red}{\begin{array}{c} 1\\2\\3 \end{array}} \\
    	{\color{red}1} && \begin{array}{c} 3\\4 \end{array} && \color{red}{\begin{array}{c} 2\\3 \end{array}} && \begin{array}{c} 1\\2 \end{array} \\
    	& \color{red}{\begin{array}{c} 3\\1\;4 \end{array}} && \color{red}{3} && {\color{blue}2} && {\color{red}{1}} \\
    	{\color{red}{4}} && \begin{array}{c} 3\\1 \end{array}
    	\arrow[from=1-6, to=2-7]
    	\arrow[from=2-1, to=3-2]
    	\arrow[dashed, from=2-3, to=2-1]
    	\arrow[from=2-3, to=3-4]
    	\arrow[from=2-5, to=1-6]
    	\arrow[from=2-5, to=3-6]
    	\arrow[dashed, from=2-7, to=2-5]
    	\arrow[from=2-7, to=3-8]
    	\arrow[from=3-2, to=2-3]
    	\arrow[from=3-2, to=4-3]
    	\arrow[from=3-4, to=2-5]
    	\arrow[dashed, from=3-4, to=3-2]
    	\arrow[from=3-6, to=2-7]
    	\arrow[dashed, from=3-6, to=3-4]
    	\arrow[dashed, from=3-8, to=3-6]
    	\arrow[from=4-1, to=3-2]
    	\arrow[from=4-3, to=3-4]
    	\arrow[dashed, from=4-3, to=4-1]
    \end{tikzcd}\]
    Red vertices denote elements of $\mathcal{F}$, blue vertices denote elements of $\mathcal{T}$. Hence $A$ is not cospherical (i.e., not a counterexample to \cref{theorem:directed}).
\end{example}

\begin{theorem}\label{theorem:directed}
    Assume $A$ is cospherical and representation-finite. Then $A$ is directed. In particular, its Gabriel quiver does not contain a cycle.
\end{theorem}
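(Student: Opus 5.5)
The plan is to reduce a cycle in $\mod A$ to a cyclic $\tau$-orbit and then invoke \cref{theorem:tau-orbits}. Since $A$ is representation-finite, it suffices to show that $\AR(A)$ has no oriented cycle. The acyclicity of the Gabriel quiver then follows in the standard way: an oriented cycle of arrows between vertices gives nonzero radical maps between the corresponding indecomposable projectives, and hence a cycle in $\mod A$.

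\emph{Step 1: a cycle lies entirely in $\T$ or entirely in $\F$.} Suppose $(X_0,\dots,X_n=X_0)$ is a cycle in $\AR(A)$. By \cref{thm::characterisationcospherical}, $(\T,\F)$ is a split torsion pair, so every indecomposable lies in $\T$ or in $\F$, and $\Hom_A(\T,\F)=0$. If the cycle met both classes, then going once around the cycle would give a path of irreducible maps from some $X_i\in\T$ to some $X_j\in\F$. In a representation-finite algebra, a path lying on an oriented cycle of $\AR(A)$ can be chosen to have nonzero composite; I would either justify this using the Harada--Sai bound together with going around the cycle sufficiently often, or use instead the relation $\rad^\infty=0$ and the fact that $X_i$ and $X_j$ lie in a common cyclic component. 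A nonzero composite would contradict $\Hom_A(\T,\F)=0$.

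\emph{Step 2: the case of a cycle in $\T$.} No projective module lies in $\T$, so $\tau$ is defined on every $X_i$. The mesh relations turn each arrow $X_i\to X_{i+1}$ into an arrow $\tau X_i\to\tau X_{i+1}$, so $\tau C$ is again a cycle, where $C$ denotes the original cycle. By Step 1, $\tau C$ again lies entirely in $\T$ or entirely in $\F$, and there are arrows $\tau X_{i+1}\to X_i$ joining the two cycles. I will argue that $\tau C$ cannot lie in $\F$: otherwise its modules $\tau X_{i+1}$ would lie on the same cyclic part of $\AR(A)$ as $C$, again producing a nonzero radical map from $\T$ to $\F$ as in Step 1. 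Hence $\tau^m C\subseteq\T$ for all $m\geq 0$. By representation-finiteness the sequence of cycles $\tau^m C$ must repeat, which gives a cyclic $\tau$-orbit of $X_0$ and contradicts \cref{theorem:tau-orbits}.

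\emph{Step 3: the case of a cycle in $\F$.} Here I argue dually with $\tau^{-1}$. By \cref{lemma:proj-inj}, an injective module on the cycle is projective-injective, and one can pass around it through $\rad P\to P\to P/\soc P$, exactly as in the proof of \cref{theorem:tau-orbits}. Otherwise $\tau^{-1}$ applies, and we again obtain cycles $\tau^{-m}C$ contained in $\F$. Eventually this yields either a cyclic $\tau$-orbit, which is excluded, or the situation of the second half of the proof of \cref{theorem:tau-orbits}, where the cosyzygies keep containing modules of this kind and so $\idim=\infty$; this contradicts $\gldim A<\infty$.

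The main obstacle is the gluing claim used in Steps 1 and 2: that the shifted cycle $\tau^{\pm1}C$ is connected back to $C$ by paths in both directions, or otherwise that a path on a cycle carries a nonzero composite from $\T$ to $\F$. I would first try to prove this using the fact that in a representation-finite algebra, modules lying on a common oriented cycle of a component have nonzero radical maps between them in both directions. If that fails, the fallback is the cruder argument of bounding the number of times a cycle can cross between $\T$ and $\F$ under iteration of $\tau$.
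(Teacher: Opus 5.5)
Your reduction to \cref{theorem:tau-orbits} can be made to work, but as written it has a genuine gap at exactly the point you call the main obstacle, and neither remedy you suggest closes it. In Step 2 the only arrows you have between $C$ and $\tau C$ are the mesh arrows $\tau X_{i+1}\to X_i$, and these go from $\tau C$ to $C$. If $\tau C\subseteq\F$, they are maps from $\F$ to $\T$, which a torsion pair does not forbid. To get a contradiction you need a path from $C$ into $\tau C$. Saying that $\tau X_{i+1}$ lies ``on the same cyclic part'' as $C$ is exactly the statement to be proved, and the fact that modules on a common oriented cycle are linked in both directions presupposes it. Mesh combinatorics alone cannot supply this. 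In a translation quiver $\mathbb{Z}\Delta$ with $\Delta$ an oriented cycle, $C=\{(k,i)\}_i$ and $\tau C=\{(k-1,i)\}_i$ are disjoint cycles, and every path between them goes from $\tau C$ to $C$. The same unproved gluing is what keeps $\tau^{-m}C$ inside $\F$ in Step 3, and the fallback of bounding crossings is not yet an argument.

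The missing ingredient is the theorem of Bautista--Smal\o{} (\cref{theorem:BS}): the AR-quiver of an artin algebra has no sectional cycles, so every cycle $(C_0,\dots,C_n)$ has an $i$ with $C_i\cong\tau C_{i+2}$. This rules out configurations like the one above and gives your gluing at once, since $C$ and $\tau C$ then share the vertex $C_i\cong\tau C_{i+2}$.

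The paper applies it at the transition instead. Since there are no cyclic $\tau$-orbits and $A$ is representation-finite, iterating $\tau$ on a cycle $C\subseteq\T$ eventually reaches a projective. So there is a minimal $m$ with $\tau^mC\subseteq\F$, and \cref{theorem:BS} applied to $\tau^mC$ gives $\tau^{m-1}C_i\cong\tau^mC_{i+2}\in\T\cap\F$, a contradiction.

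The $\F$-case is dual. First replace each projective-injective $C_j$ on the cycle by an indecomposable summand of $\rad C_j/\soc C_j$, which lies in $\F$ and is non-injective. Then use \cref{lemma:proj-inj} to see that, for $k\geq1$, modules $\tau^{-k}C'_i$ lying in $\F$ are never injective. No infinite-injective-dimension argument is needed.

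Finally, Step 1 is correct but your justification is off. A cycle meeting both classes has consecutive $X_i\in\T$, $X_{i+1}\in\F$, and the single irreducible map $X_i\to X_{i+1}$ is already a nonzero map from $\T$ to $\F$. By contrast, the Harada--Sai lemma forces long composites around a cycle to vanish, not to be nonzero.
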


Before we prove the above we first recall the following result, which is the most important ingredient for the proof.

\begin{theorem}[{\cite[Theorem~7]{BS}}]\label{theorem:BS}
    Let $A$ be an artin algebra. Then there are no sectional cycles in the AR-quiver of $A$, that is for every cycle $(C_0,\ldots,C_n)$ there exists $0\leq i\leq n-2$ such that $C_i\cong \tau C_{i+2}$.
\end{theorem}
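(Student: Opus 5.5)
The plan is to reduce the absence of sectional cycles to a propagation statement about monomorphisms and epimorphisms along sectional paths. The argument then closes with a length count around the cycle. Recall that every irreducible morphism between indecomposable modules is either a proper monomorphism or a proper epimorphism, and never both.

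\emph{Key lemma.} Let $X \xrightarrow{f} Y \xrightarrow{g} Z$ be irreducible morphisms between indecomposable modules with $X \not\cong \tau Z$. We aim to show:
\begin{enumerate}[label=(\alph*)]
\item if $g$ is a monomorphism, then $f$ is a monomorphism;
\item dually, if $f$ is an epimorphism, then $g$ is an epimorphism.
\end{enumerate}
Statement (b) follows from (a) by the duality $D$, so only (a) needs proof.

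For (a), suppose $g$ is a monomorphism and $f$ is an epimorphism; we seek a contradiction. Then $Z$ is not projective, since $Y$ is a proper submodule of $Z$ via an irreducible map and so lies in $\rad Z$, the source of the sink map. Hence there is an almost split sequence
\[ 0\to \tau Z\xrightarrow{\binom{u}{u'}} Y\oplus E\xrightarrow{(g\ g')} Z\to 0, \]
and $f$ is a component of the minimal left almost split map starting at $X$. Because $g$ is injective, the pullback of $(g\ g')$ along $g$ splits off. This should force $u'\colon \tau Z\to E$ to be a monomorphism, making $\tau Z$ a submodule of $E$ compatibly with $g'$. One then uses that $f$ is an irreducible epimorphism onto $Y$ with $X\not\cong\tau Z$: the map $f$ factors through neither $u$ nor the other arrows into $Y$. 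Comparing lengths in the mesh at $Y$ (the almost split sequence starting at $\tau Z$ contains $Y$, and the one ending at $Y$ contains $X$) should then produce the contradiction.

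This lemma is the main obstacle. If the direct mesh argument does not close, I would fall back on the Igusa--Todorov style argument: compose along the sectional path and show that the composite $gf$ does not factor through $\rad^3$. This would be incompatible with $f$ killing the kernel in a way that $g$ is forced to respect.

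\emph{Closing argument.} Let $(C_0,\dots,C_n)$ with $C_n\cong C_0$ be a sectional cycle, and choose irreducible maps $f_i\colon C_{i-1}\to C_i$, indices taken modulo $n$. Sectionality guarantees the hypothesis of the key lemma for every consecutive pair, including the pair wrapping around $C_n=C_0$.
\begin{itemize}
\item If all $f_i$ are monomorphisms, then the lengths $\ell(C_i)$ strictly increase around the cycle, which is impossible.
\item If all $f_i$ are epimorphisms, the lengths strictly decrease, which is equally impossible.
\item Otherwise some $f_i$ is an epimorphism. By part (b) of the lemma, $f_{i+1},f_{i+2},\dots$ are all epimorphisms, and going all the way around the cycle every $f_j$ is an epimorphism. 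This returns us to the previous case, a contradiction.
\end{itemize}
Hence no sectional cycle exists. Equivalently, every cycle must contain an index $i$ with $C_i\cong\tau C_{i+2}$, which is the statement.
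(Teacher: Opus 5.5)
The paper does not prove this statement itself; it quotes it from Bautista--Smal{\o}. Your proposal therefore has to stand on its own, and it has a genuine gap: the key lemma is false.

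The sentence ``Then $Z$ is not projective, since $Y$ \dots\ lies in $\rad Z$'' is a non sequitur. In fact every irreducible map into an indecomposable projective module is a monomorphism, because an epimorphism onto a projective splits. Moreover, for projective $Z$ the hypothesis $X\not\cong\tau Z$ holds automatically. A concrete counterexample is the Nakayama algebra of Proposition~\ref{Venjakobcounterexample}, with modules labelled by their support:
\begin{itemize}
\item $23\to 2\to 12$ is a path in the AR-quiver, and $12=e_1A$ is projective, so the path is sectional;
\item $23\to 2$ is an epimorphism;
\item $2\to 12$ is the inclusion $\rad(e_1A)\hookrightarrow e_1A$, a monomorphism.
\end{itemize}
The same happens for $\begin{smallmatrix}3\\4\end{smallmatrix}\to 3\to \begin{smallmatrix}2\\3\end{smallmatrix}=e_2A$ in Example~\ref{example::nondiectedalgerbawithoutcyclictauorbit}. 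So along a sectional path an epimorphism can be followed by a monomorphism. For any cycle passing through a projective module, your propagation ``once epi, always epi'' breaks and the length count gives nothing. Even for non-projective $Z$ you have only indicated what ``should'' happen; no argument is given. The $\rad^3$ fallback (which is true for sectional pairs) cannot establish a lemma that is false.

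There is a second, independent problem. The statement only assumes $C_i\not\cong\tau C_{i+2}$ for $0\le i\le n-2$. Your closing argument additionally uses sectionality at the wrap-around triple $(C_{n-1},C_0,C_1)$, and your final conclusion allows $i=n-1$, which is weaker than what is claimed. Without the wrap-around, even a correct propagation lemma would only give a block of monomorphisms followed by a block of epimorphisms. Then the lengths $\ell(C_i)$ first rise and then fall, which is perfectly compatible with $C_n\cong C_0$.

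What does work for the cyclic version is the full-length Igusa--Todorov theorem: a composite of irreducible maps along a sectional path of any length is non-zero. Suppose the cycle is sectional also at $C_0$. Then going around it $k$ times is again a sectional path, so $h^k\neq 0$ for all $k$, where $h=f_n\cdots f_1\in\rad\End_A(C_0)$. This contradicts the nilpotency of the radical of the local artinian ring $\End_A(C_0)$. The formulation with $0\le i\le n-2$, as stated in the paper, is not reached by this argument alone and needs more.
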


\begin{proof}[Proof of \cref{theorem:directed}]
    Assume there was a cycle $C=(C_0,\ldots,C_n)$ in $\AR(A)$. We first observe that either $C\subseteq\T$ or $C\subseteq\F$, that is either all $C_i$ are in $\T$ or in $\F$. Otherwise there would be a morphism from $\T$ to $\F$, a contradiction.

    Consider first the case $C\subseteq \T$. In that case $C_i$ is not projective for all $0\leq i\leq n$ (otherwise $C_i\in\F$). Thus $\tau C_i$ is defined. Moreover there exists an arrow $C_i\to C_{i+1}$ in $\AR(A)$ if and only if there exists an arrow $\tau C_{i+1}\to C_i$ if and only if there exists an arrow $\tau C_{i}\to \tau C_{i+1}$.
    % https://q.uiver.app/#q=WzAsNCxbMiwxLCJDX2kiXSxbMywwLCJDX3tpKzF9Il0sWzEsMCwiXFx0YXUgQ197aSsxfSJdLFswLDEsIlxcdGF1IENfaSJdLFsyLDBdLFswLDFdLFszLDJdLFsxLDIsIiIsMix7InN0eWxlIjp7ImJvZHkiOnsibmFtZSI6ImRhc2hlZCJ9fX1dLFswLDMsIiIsMix7InN0eWxlIjp7ImJvZHkiOnsibmFtZSI6ImRhc2hlZCJ9fX1dXQ==
    \[\begin{tikzcd}[column sep = small, row sep = small]
    	& {\tau C_{i+1}} && {C_{i+1}} \\
    	{\tau C_i} && {C_i}
    	\arrow[from=1-2, to=2-3]
    	\arrow[dashed, from=1-4, to=1-2]
    	\arrow[from=2-1, to=1-2]
    	\arrow[from=2-3, to=1-4]
    	\arrow[dashed, from=2-3, to=2-1]
    \end{tikzcd}\]
    It follows that 
    \[\tau C=(\tau C_0,\ldots,\tau C_n)\]
    defines a cycle in $\AR(A)$. There are no cyclic $\tau$-orbits by \cref{theorem:tau-orbits} and hence, since $A$ is representation-finite, for every indecomposable $M\in\mod{A}$, there exists $0\leq p<\infty$ such that $\tau^pM$ is projective. Therefore there exists a minimal $m$ such that $\tau^mC$ contains an element of $\F$. As we observed above this implies that $\tau^m C\subseteq \F$. However by \cref{theorem:BS} there exists $0\leq i\leq n-2$ such that 
    \[\tau^mC_i\cong \tau^{m+1}C_{i+2}\] 
    or equivalently $\tau^{m-1}C_i\cong \tau^{m}C_{i+2}\in\F$. Hence $\tau^{m-1}C_{i}\in\F$, a contradiction to the minimality of $m$. Thus there does not exist a cycle in $\T$.

    Finally, we consider the case $C\subseteq\F$. The general idea is dual to the case $C\subseteq \T$, but we first might have to adjust $C$. Assume there is $C_j$ which is injective so that $\tau^{-1}C_j$ is not defined. Since $C_j\in\F$, by \cref{lemma:proj-inj} it is projective-injective. Thus we get the following Auslander--Reiten sequence for $C_j$.
    % https://q.uiver.app/#q=WzAsNCxbMCwxLCJcXHJhZChDX2opIl0sWzEsMCwiQ19qIl0sWzEsMiwiXFxyYWQoQ19qKS9cXHNvYyhDX2opIl0sWzIsMSwiQ19qL1xcc29jKENfaikiXSxbMSwzLCIiLDIseyJzdHlsZSI6eyJoZWFkIjp7Im5hbWUiOiJlcGkifX19XSxbMiwzLCIiLDAseyJzdHlsZSI6eyJ0YWlsIjp7Im5hbWUiOiJob29rIiwic2lkZSI6InRvcCJ9fX1dLFswLDEsIiIsMix7InN0eWxlIjp7InRhaWwiOnsibmFtZSI6Imhvb2siLCJzaWRlIjoidG9wIn19fV0sWzAsMiwiIiwwLHsic3R5bGUiOnsiaGVhZCI6eyJuYW1lIjoiZXBpIn19fV0sWzMsMCwiIiwxLHsic3R5bGUiOnsiYm9keSI6eyJuYW1lIjoiZGFzaGVkIn19fV1d
    \[\begin{tikzcd}[column sep = small, row sep = small]
    	& {C_j} \\
    	{\rad(C_j)} && {C_j/\soc(C_j)} \\
    	& {\rad(C_j)/\soc(C_j)}
    	\arrow[two heads, from=1-2, to=2-3]
    	\arrow[hook, from=2-1, to=1-2]
    	\arrow[two heads, from=2-1, to=3-2]
    	\arrow[dashed, from=2-3, to=2-1]
    	\arrow[hook, from=3-2, to=2-3]
    \end{tikzcd}\]
    Since $C_j$ is indecomposable projective-injective and occurs on an orriented cycle in $\AR(A)$, it is not simple. Hence $\rad(C_j)$ has simple socle and $C_j/\soc(C_j)$ has simple top and therefore both are indecomposable. Thus, it holds 
    \[C_{j-1}\cong\rad(C_j), \quad C_{j+1}\cong C_{j}/\soc(C_j).\] 
    Since $\F$ is closed under submodules and $\rad(C_{j})/\soc(C_j)\hookrightarrow C_{j}/\soc(C_j)$ is a monomorphism it holds $\rad(C_{j})/\soc(C_j)\in\F$. Take any indecomposable direct summand $D_j$ of $\rad(C_{j})/\soc(C_j)$. Then $D_j\in\F$ and moreover $D_j$ is not injective, otherwise it would be isomorphic to $C_j/\soc(C_j)$ since both modules are indecomposable. We define 
    \[C'=(C'_0,\ldots,C'_n)=\begin{cases}D_j & \text{if } C_j \text{ is injective,} \\ 
    C_j & \text{else}.
    \end{cases}\]
    By the above observations $C'$ defines a cycle in $\F$ that does not contain any injectives. Hence, by the dual argument from above $\tau^{-1}C'$ defines a cycle in $\AR(A)$. Observe that if $\tau^{-1}C'\subseteq\F$, then it does not contain any injective. Otherwise, by \cref{lemma:proj-inj}, it would be projective-injective and could not be $\tau^{-1}M$ for any $A$-module $M$. Again, since there are no cyclic $\tau$-orbits by \cref{theorem:tau-orbits} and $A$ is representation-finite there exists a minimal $1\leq m<\infty$ such that $\tau^{-m}(C')$ contains an element of $\T$. Then, by the above considerations, $\tau^{-m}C'\subseteq\T$ has to be completely contained in $\T$. However by \cref{theorem:BS} there exists $0\leq i\leq n-2$ such that 
    \[\tau^{-m}C'_{i}=\tau^{-m+1}C'_{i+2}.\] 
    Thus it holds $\tau^{-(m-1)}C'_{i+2}\in\T$, a contradiction to the minimality of $m$. It follows that such a cycle cannot exist and hence there is no cycle in $\F$.
\end{proof}

\begin{corollary}
    A (co)spherical Nakayama algebra is linear.
\end{corollary}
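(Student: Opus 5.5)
The plan is to reduce the corollary to \cref{theorem:directed}. A Nakayama algebra is representation-finite. Its Gabriel quiver is either a linearly oriented quiver of type $A$ or an oriented cycle, and ``linear'' means the first case. So I will argue by contradiction: assume $A$ is a (co)spherical Nakayama algebra whose quiver is an oriented cycle.

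First consider the cospherical case. Since $A$ is representation-finite and cospherical, \cref{theorem:directed} says $A$ is directed and that its Gabriel quiver contains no cycle. This rules out the oriented cycle, so $A$ is linear.

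Now consider the spherical case. If $A$ is spherical, then $A^{\op}$ is cospherical, because spherical right $A$-modules are exactly the cospherical left modules after applying the duality $D$. The class of Nakayama algebras is closed under passing to the opposite algebra. Also, the quiver of $A^{\op}$ is the opposite quiver of $A$, which is an oriented cycle exactly when the quiver of $A$ is. So $A^{\op}$ is a cospherical representation-finite algebra, and the first case shows its quiver is linear; hence the quiver of $A$ is linear too.

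I do not expect a genuine obstacle here. The only points to state cleanly are the left-right symmetry just described and the standard fact that the quiver of a connected Nakayama algebra is either $\vec{A}_n$ or $\tilde{A}_n$ with cyclic orientation. If $A$ is not connected, I apply the argument to each block: a block of a (co)spherical algebra is again (co)spherical, since its indecomposable modules and their Ext groups into the block are the same as over $A$.
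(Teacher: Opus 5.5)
Your proposal is correct and is essentially the paper's argument: the corollary is stated as an immediate consequence of \cref{theorem:directed}, since a Nakayama algebra is representation-finite and a directed algebra has an acyclic Gabriel quiver, which rules out the oriented cycle. Your passage to $A^{\op}$ for the spherical case, and your block-by-block remark, just make explicit what the paper leaves implicit. The body of \cref{theorem:directed} is stated only for cospherical algebras, whereas the introduction states it for (co)spherical ones.
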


Combining this with Theorem \ref{thm::characterisationcospherical}, we get a complete characterization of (co)spherical Nakayama algebras in terms of their Kupisch series. Recall that a Nakayama algebra $A$ is uniquely determined by its Kupisch series $(c_1,\ldots,c_n)$, where $c_i$ is the dimension of the $i$-th indecomposable projective $A$-module, see \cite[Theorem~32.9]{AF}.

\begin{corollary}\label{coro:LinearNakayama}
    For a linear Nakayama algebra $A$ the following are equivalent.
    \begin{enumerate}[label = (\roman*)]
        \item $A$ is cospherical.
        %\item $(\T,\F)$ is a split torsion pair.
        %\item For every indecomposable module $M \in \mod A$ of injective dimension $\idim M \geq 2$ we have that $\Ext^1_A(DA,M)=0$.
        \item The top of every non-projective injective is the (unique) simple injective. 
        \item The Kupisch series is weakly decreasing.
    \end{enumerate}
    Moreover, any cospherical Nakayama algebra is of this kind.
\end{corollary}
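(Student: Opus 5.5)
Throughout, I will fix the linear orientation $1\to 2\to\cdots\to n$. Then $P_i$ is uniserial with composition factors $S_i,S_{i+1},\dots,S_{i+c_i-1}$, the Kupisch series satisfies $c_n=1$ and $c_{i+1}\geq c_i-1$, and the unique simple injective is $S_1$. If the paper's conventions are the opposite, the argument is the same after reversing indices. Every indecomposable $A$-module is an interval module $M=[i,j]$ with top $S_i$ and socle $S_j$; its quotients are the intervals $[i,j']$ with $i\le j'\le j$.

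The plan is to avoid computing injective dimensions and to use criterion (ii) of \cref{thm::characterisationcospherical} instead: $A$ is cospherical if and only if every indecomposable $M$ lies in $\T$ or in $\F$. I will translate both classes into combinatorics of the \emph{end positions} $e_l:=l+c_l-1$, where $S_{e_l}=\soc P_l$. A uniserial submodule of a projective is of the form $\rad^jP_l$, so $[i,j]\in\F$ if and only if $j=e_l$ for some $l\le i$. A nonzero map $M\to A$ has image a quotient of $M$ lying in $\F$, so $M\in\T$ if and only if no $[i,j']$ with $j'\le j$ has $j'\in\{e_l\mid l\le i\}$. The inequality $c_{l+1}\ge c_l-1$ gives $e_{l+1}\ge e_l$, and $e_{l+1}=e_l+1+(c_{l+1}-c_l)$.

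\textbf{(iii)$\Rightarrow$(i).} If the Kupisch series is weakly decreasing, then $e_{l+1}-e_l\in\{0,1\}$. Hence $\{e_l\mid l\le i\}$ is the full integer interval $[c_1,e_i]$. For $M=[i,j]$ we always have $j\le e_i$. If some quotient end $j'\in[i,j]$ lies in $[c_1,e_i]$, then $j\ge c_1$, so $j$ itself is an end position and $M\in\F$. Otherwise $M\in\T$. So $(\T,\F)$ is split and $A$ is cospherical.

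\textbf{(i)$\Rightarrow$(iii).} Suppose instead that $c_{l+1}>c_l$ for some $l$. Then $e_{l+1}\ge e_l+2$, so $e_l+1$ is not an end position for any index $\le l+1$, by monotonicity. Take $M=[l+1,e_l+1]$, which is a module since $e_l+1\le e_{l+1}$. It is not in $\F$. Its quotient $[l+1,e_l]$ is $\rad P_l\in\F$, which is nonzero because $c_l\ge 2$. Indeed, $c_l=1$ with $l<n$ would disconnect the quiver; I will assume connectedness as in the intro statement, or argue blockwise. So $M\notin\T$, contradicting splitness.

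\textbf{(ii)$\Leftrightarrow$(iii).} The indecomposable injectives are the intervals $[i,e_i]$ that are maximal, i.e.\ not contained in $[i-1,e_{i-1}]$. Such an injective is non-projective exactly when it equals $[i,e_l]$ for some $l<i$ with $e_l\ge i$. I will check that a non-projective injective with top $S_i$, $i>1$, occurs exactly when $e_{i-1}=e_i$ fails in the wrong direction. More precisely, the injective $I_{e_i}$ has top $S_k$ with $k$ minimal such that $e_k=e_i$. This injective is non-projective with top $S_k\neq S_1$ precisely when some $e_{k-1}$ jumps. Matching this with the first strict increase $c_{l+1}>c_l$ should give the equivalence directly.

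Finally, the "moreover" clause follows from the preceding corollary, since a (co)spherical Nakayama algebra is linear.

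I expect the main obstacle to be the bookkeeping in (ii)$\Leftrightarrow$(iii): pinning down tops of non-projective injectives in terms of the $e_l$, including the edge cases $c_l=1$ and $i=1$. The other implications are clean once the description of $\T$ and $\F$ via end positions is in place.
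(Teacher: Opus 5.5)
Your argument for (i)$\Leftrightarrow$(iii) is correct, and it takes a different route from the paper. You use the split-torsion-pair criterion of \cref{thm::characterisationcospherical} in both directions, describing $\T$ and $\F$ explicitly through the end positions $e_l$. The paper instead proves (i)$\Rightarrow$(ii) with the $\Ext^1_A(DA,-)$-criterion applied to $\tau I$, and obtains (iii)$\Rightarrow$(i) by reading the split torsion pair off a picture of the AR-quiver. Your (iii)$\Rightarrow$(i) is more explicit than that picture. Your witness $[l+1,e_l+1]$ for (i)$\Rightarrow$(iii) is exactly the paper's object: it is the injective envelope of $S_{e_l+1}$, a non-projective injective with top $S_{l+1}\neq S_1$, whose $\tau$-translate the paper uses.

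The gap is (ii)$\Leftrightarrow$(iii). Your sketch rests on a wrong description of the injectives, so it cannot be completed as written. The intervals $[i,e_i]$ are the projectives $P_i$, so the ``maximal'' ones are exactly the projective-injectives. A module $[i,e_l]$ with $l<i$ is a proper submodule of the indecomposable $[i-1,e_l]$, so it is never injective. And $I_{e_i}=[k,e_k]=P_k$ is always projective. So none of the modules you propose to examine is a non-projective injective.

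The correct bookkeeping is as follows. The module $[k,j]$ is injective if and only if $k=1$ or $e_{k-1}<j$. Hence the injective envelope of $S_j$ is $[k,j]$ with $k=\min\{l\mid e_l\geq j\}$, and it is non-projective if and only if $j\notin\{e_l\}$. Therefore (ii) fails if and only if some $j\notin\{e_l\}$ has $k\geq 2$. That means $e_{k-1}<j<e_k$, so $e_k-e_{k-1}\geq 2$, i.e.\ $c_k>c_{k-1}$, which is exactly the failure of (iii). Conversely, if $c_k>c_{k-1}$, take $j=e_{k-1}+1$. With this in place, (ii)$\Leftrightarrow$(iii) is immediate, and the rest of your proof stands.
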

\begin{proof}
Since a representation-finite cospherical algebra needs to be directed by Theorem \ref{theorem:directed}, a cospherical Nakayama algebra needs to be a linear Nakayama algebra.

%By Theorem \ref{thm::characterisationcospherical} we know that $A$ is cospherical if and only if for every indecomposable module $M \in \mod A$ of injective dimension $\idim M \geq 2$ we have that $\Ext^1_A(DA,M)=0$.

Since $A$ is a linear Nakayama algebra, we know that $\idim M \geq 2$ if and only if the top of the injective hull $I_M \hookleftarrow M$ is not injective.
Hence, if there is an indecomposable non-projective injective $I$ whose top is not injective, then $\idim \tau I \geq 2$ while $\Ext^1_{A}(I, \tau I) \neq 0$ so that $A$ cannot be cospherical.
The non-existence of such an injective is precisely the weakly decreasing property of the Kupisch series.

Finally, if the Kupisch series is weakly decreasing, one can see from the AR-quiver of $A$ that any indecomposable module either has a projective injective hull, and thus is in $\F$, or does not map to any projective and is thus in $\T$.

\vspace{-1cm}
\begin{equation*}
    \begin{tikzcd}[column sep = small, row sep = small]
        && && && && && && && \textcolor{white}{\bullet} \arrow[ddddddllllll, color = black, no head, very thick, ,shorten <=1cm]
        \\
        && && && && && \textcolor{red}{\bullet}\arrow[dr] && \textcolor{red}{\bullet}\arrow[dr] & 
        \\
        & && \textcolor{red}{\F} && && && \textcolor{red}{\bullet}\arrow[dr]\arrow[ur] && \textcolor{red}{\bullet}\arrow[dr]\arrow[ur] &&
        \textcolor{blue}{\bullet}\arrow[dr] && \textcolor{blue}{\T}
        \\
        && && && \textcolor{red}{\bullet}\arrow[dr] && \textcolor{red}{\bullet}\arrow[dr]\arrow[ur] && \textcolor{red}{\bullet}\arrow[dr]\arrow[ur] &&
        \textcolor{blue}{\bullet}\arrow[dr]\arrow[ur] && \textcolor{blue}{\bullet}\arrow[dr] && 
        \\
        & \textcolor{red}{\bullet}\arrow[dr] && \textcolor{red}{\bullet}\arrow[dr] && \textcolor{red}{\bullet}\arrow[dr]\arrow[ur] && \textcolor{red}{\bullet}\arrow[dr]\arrow[ur] && \textcolor{red}{\bullet}\arrow[dr]\arrow[ur] &&
        \textcolor{blue}{\bullet}\arrow[dr]\arrow[ur] && \textcolor{blue}{\bullet}\arrow[dr]\arrow[ur] && \textcolor{blue}{\bullet}\arrow[dr]
        \\
    	\textcolor{red}{\bullet}\arrow[ur] && \textcolor{red}{\bullet}\arrow[ur] && \textcolor{red}{\bullet}\arrow[ur] && \textcolor{red}{\bullet}\arrow[ur] && \textcolor{red}{\bullet}\arrow[ur] && 
        \textcolor{blue}{\bullet}\arrow[ur] && \textcolor{blue}{\bullet}\arrow[ur] && \textcolor{blue}{\bullet}\arrow[ur] && \textcolor{blue}{\bullet} \\
        && && && && \textcolor{white}{\bullet}
    \end{tikzcd}
\end{equation*}
Hence, $(\T, \F)$ is a split torsion pair and therefore, $A$ is cospherical by Theorem \ref{thm::characterisationcospherical}.
\end{proof}

\begin{remark}
    For a weakly decreasing Kupisch series we have $c_i-c_{i+1} \in \{0,1\}$ and $c_n=1, c_{n-1}=2$ and thus there are $2^{n-2}$ weakly decreasing Kupisch series for $n \geq 2$.
\end{remark}

\begin{corollary}\label{bisphericalNakayamaalgebras}
    For a Nakayama algebra $A$ the following are equivalent.
    \begin{enumerate}[label = (\roman*)]
        \item The Auslander--Yoneda algebra $Y(A)$ is Auslander--Gorenstein.
        \item $A$ is bispherical.
        \item $A$ is a truncated linear Nakayama algebra, i.e. $A = KQ/J^\ell$ for $Q$ the linearly oriented type $A$ quiver, $J$ the Jacobson radical and $\ell \geq 2$.
    \end{enumerate}
\end{corollary}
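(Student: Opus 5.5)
The equivalence (i)$\Leftrightarrow$(ii) is Theorem \ref{Yoneda-AG}(1), since $A$ is representation-finite and $\mod\Y\cong\mod^\Z Y(A)$; graded and ungraded Auslander--Gorenstein agree, as remarked in the introduction. It remains to show (ii)$\Leftrightarrow$(iii). Bisphericality means that both $A$ and $A^\op$ are cospherical. By Corollary \ref{coro:LinearNakayama} applied to $A$, a bispherical Nakayama algebra is linear with weakly decreasing Kupisch series $(c_1,\dots,c_n)$, with vertices numbered along the linear quiver so that $P_i$ has composition factors $i,i+1,\dots,i+c_i-1$.

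Next I apply the same corollary to $A^\op$. This is again a linear Nakayama algebra, obtained by reversing the quiver, and its indecomposable projectives are $D$ of the injectives of $A$. After renumbering vertices so that the quiver points the same way, the Kupisch series of $A^\op$ records the lengths of the indecomposable injectives $I_j$ of $A$, read in reverse order. The length of $I_j$ is $d_j=\#\{i\le j : i+c_i-1\ge j\}$. So $A^\op$ is cospherical if and only if the sequence $(d_n,d_{n-1},\dots,d_1)$ is weakly decreasing, i.e.\ $(d_1,\dots,d_n)$ is weakly increasing. Equivalently, by condition (ii) of Corollary \ref{coro:LinearNakayama} applied to $A$ dually, the socle of every non-injective projective of $A$ is the simple projective $S_n$.

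The combinatorial core is to show the following. If $c$ is weakly decreasing, $c_n=1$, and every non-injective $P_i$ has socle $S_n$ (i.e.\ $i+c_i-1=n$ whenever $P_i$ is not injective), then $c$ is truncated: $c_i=\min(\ell,n-i+1)$ with $\ell=c_1$.

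To prove this, let $\ell=c_1$ and use that $c$ decreases by steps $0$ or $1$. For $i+\ell-1\le n$, I will show $c_i=\ell$.
\begin{itemize}
\item Suppose instead some $c_{i+1}=c_i-1$ occurs with $i+c_i-1<n$.
\item Then $P_{i+1}$ embeds in $P_i$ as its radical, and both have socle $S_{i+c_i-1}$.
\item So $P_{i+1}$ is non-injective with socle different from $S_n$, which is a contradiction.
\end{itemize}
Hence drops only occur once $i+c_i-1=n$, after which $c_i=n-i+1$ is forced. This gives $A\cong KQ/J^\ell$ with $\ell\ge 2$ when $n\ge2$ (the trivial case $n=1$ is excluded or absorbed).

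Conversely, for $KQ/J^\ell$ the Kupisch series is weakly decreasing, and $A^\op\cong KQ^\op/J^\ell$ is again truncated linear, hence has a weakly decreasing Kupisch series as well. So both $A$ and $A^\op$ are cospherical, i.e.\ $A$ is bispherical.

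The main obstacle is the bookkeeping for $A^\op$: identifying its Kupisch series correctly in terms of injectives of $A$, with the right orientation. I would do this with the socle formulation, which avoids the explicit $d_j$ computation.
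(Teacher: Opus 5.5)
Your proposal is correct and follows the route the paper intends; the paper states this corollary without a separate proof. There, (i)$\Leftrightarrow$(ii) is Theorem~\ref{Yoneda-AG} for the representation-finite algebra $A$, and (ii)$\Leftrightarrow$(iii) comes from applying Corollary~\ref{coro:LinearNakayama} to both $A$ and $A^{\op}$. Your socle argument supplies the combinatorics the paper leaves implicit, and it is correct: a drop $c_{i+1}=c_i-1$ with $i+c_i-1<n$ would make $P_{i+1}\cong\rad P_i$ a non-injective projective with socle $S_{i+c_i-1}\neq S_n$.

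Like the paper, you tacitly assume $A$ is connected. For (i)$\Rightarrow$(ii) you also tacitly assume $\gldim A<\infty$, which is a standing hypothesis of Theorem~\ref{Yoneda-AG}.
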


\begin{corollary}\label{cor::somelinearNakayamaalgebrasareqAG}
Let $A$ be a connected linear Nakayama algebra with weakly decreasing Kupisch series. Then $A$ is left quasi-Auslander--Gorenstein.
\end{corollary}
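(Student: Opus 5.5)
The plan is to reduce the statement to \cref{qAG} applied to the opposite algebra, using the characterization of cospherical Nakayama algebras in \cref{coro:LinearNakayama}. By definition, $A$ is left quasi-Auslander--Gorenstein if and only if $A^{\op}$ is right quasi-Auslander--Gorenstein. By \cref{qAG}, it therefore suffices to show that $A^{\op}$ is a spherical algebra.

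The first step is to relate cosphericalness of $A$ to sphericalness of $A^{\op}$ via the duality $D=\Hom_K(-,K)\colon \mod A\to\mod A^{\op}$. For $M\in\mod A$, the duality gives natural isomorphisms
\[ \Ext^i_A(DA,M)\cong \Ext^i_{A^{\op}}(DM,A). \]
It also gives $\idim_A M=\pdim_{A^{\op}} DM$. Moreover, $D$ sends indecomposables to indecomposables and is essentially surjective. Hence $M$ is cospherical over $A$ if and only if $DM$ is spherical over $A^{\op}$, and so $A$ is cospherical if and only if $A^{\op}$ is spherical. The introduction already remarks this equivalence for $A$ and $A^{\op}$ with the roles swapped.

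The second step uses \cref{coro:LinearNakayama}. A connected linear Nakayama algebra with weakly decreasing Kupisch series is cospherical, by the implication (iii)$\Rightarrow$(i). By the first step, $A^{\op}$ is then spherical. \cref{qAG} then shows that $A^{\op}$ is right quasi-Auslander--Gorenstein, so $A$ is left quasi-Auslander--Gorenstein.

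There is no substantial obstacle. The only point requiring care is the left/right bookkeeping: the isomorphism $\Ext^i_A(DA,M)\cong\Ext^i_{A^{\op}}(DM,A)$ must be compatible with the convention that modules are right modules. The definition of left quasi-Auslander--Gorenstein must also really match "$A^{\op}$ is right quasi-Auslander--Gorenstein", which the paper states explicitly.
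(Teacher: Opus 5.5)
Your proposal is correct and follows essentially the paper's own proof. The paper likewise uses Corollary~\ref{coro:LinearNakayama} to see that $A$ is cospherical and then invokes ``the dual of Proposition~\ref{qAG}''. Your passage to $A^{\op}$ via the duality $D$, with $\Ext^i_A(DA,M)\cong\Ext^i_{A^{\op}}(DM,A)$ and $\idim_A M=\pdim_{A^{\op}}DM$, is exactly what that dualization amounts to.
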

\begin{proof}
By Corollary \ref{coro:LinearNakayama}, cospherical linear Nakayama algebras are exactly those with weakly decreasing Kupisch series and by the dual of Proposition \ref{qAG} they are left quasi-Auslander--Gorenstein.
\end{proof}

\section{Replicated algebras of hereditary algebras}
For a finite dimensional algebra $A$, its {\it $m$-replicated algebra} is defined as the $(m+1)\times(m+1)$-matrix algebra
\[ A^{(m)}=
\begin{pmatrix} A&0& 0&\cdots& 0& 0 \\
DA&A&0&\cdots&0&0 \\
0&DA&A&\cdots&0&0 \\
\vdots&\vdots&\vdots&\ddots&\vdots&\vdots \\
0&0&0&\cdots&A&0\\
0&0&0&\cdots&DA&A
\end{pmatrix} \]

Notice in particular that the $m$-replication of the linear $A_n$-quiver is the Nakayama algebra $KQ/J^{n+1}$ for $Q$ the linearly oriented type $A_{(m+1)n}$ quiver and $J$ the Jacobson radical.
We have already seen that this algebra is always bispherical in Corollary \ref{bisphericalNakayamaalgebras}.

\begin{theorem}\label{replication}
Let $A$ be a hereditary algebra. Then its $m$-replication $A^{(m)}$ is bispherical for every $m\geq1$.
\end{theorem}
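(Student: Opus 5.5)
Our plan is to reduce to one side and use the derived-category description of $\mod A^{(m)}$. The opposite algebra of $A^{(m)}$ is again an $m$-replicated algebra, namely of the hereditary algebra $A^{\op}$: transposing the matrix and using $D(A)$ as an $A^{\op}$-bimodule identifies $(A^{(m)})^{\op}$ with $(A^{\op})^{(m)}$ up to reversing the order of the diagonal blocks. Hence it suffices to show that $B:=A^{(m)}$ is spherical for every hereditary $A$. Cosphericalness then follows by applying this to $A^{\op}$. Throughout we use that $B$ is a tilted-like algebra of finite global dimension whose bounded derived category is $\Db(\mod A)$. More precisely, there is a tilting complex $T=\bigoplus_{j=0}^{m}\nu^{j}A[-j]$ in $\Db(\mod A)$, where $\nu=-\lotimes_A DA$ is the derived Nakayama (Serre) functor. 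We have $\End(T)\cong B$ with suitable ordering, since $\Hom(\nu^{j}A,\nu^{j+1}A[-1][ \,\cdot\,])$ gives exactly the off-diagonal $DA$ and all other maps vanish by heredity.

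Under the resulting equivalence $F\colon\Db(\mod B)\xrightarrow{\sim}\Db(\mod A)$ the projective $B$-modules go to $\add T$. We have $\Ext^i_B(M,B)=\Hom(FM,FB[i])$, and $FB$ consists of shifts of $\nu^jA$. By Serre duality $\Hom(X,\nu^jA[-j+i])\cong D\Hom(\nu^{j-1}A[-j+i],X)$. So the vanishing of $\Ext^i_B(M,B)$ reduces to vanishing of maps from shifts of projectives and iterated Serre twists of projectives into the indecomposable object $FM$. Since $A$ is hereditary, every indecomposable of $\Db(\mod A)$ is a stalk $N[k]$ with $N\in\mod A$ indecomposable. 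Moreover $\nu^{j}A[-j]$ lives in a controlled window: $\nu A=DA$ in degree $0$, and $\nu(DA)=\tau^{-}$-type objects, namely $\nu I = \tau(\cdot)[1]$ for non-projective parts. So the first key step is to identify which stalks $N[k]$ are images of $B$-modules. We will describe this as the region of the AR-quiver of $\Db(\mod A)$ between $T$ and $\nu T[1]$ (i.e. $FM\in \mathcal{D}^{\le 0}\cap{}^{\perp}(\ldots)$ with respect to the t-structure induced by $T$). This is the standard description of $\mod B$ for replicated algebras, a union of shifted copies of module categories cut along $\nu^jA[-j]$.

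The second step is the computation. For an indecomposable $B$-module $M$ with $FM=N[k]$, we compute $\Hom(N[k],\nu^jA[-j+i])$ for all $j$ and $i\geq 1$. Heredity forces each such group to be nonzero only for $i$ in at most two adjacent values per $j$, namely the $\Hom$- and $\Ext^1$-degree. We then check that the region constraint kills all of them except the single largest $i$, which equals $\pdim_B M$. Concretely, the objects $\nu^jA[-j]$ with $j$ below the "level" of $M$ contribute nothing because $N[k]$ lies to their right with no backward maps. The ones at the boundary level contribute the top Ext via Serre duality, $D\Hom(\nu^{-1}N[k],\ldots)$.

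The main obstacle is this bookkeeping: controlling the position of $FM$ precisely enough to exclude the intermediate degrees $0<i<\pdim M$. If the direct derived computation gets messy, we will fall back on an inductive argument on $m$. $B=A^{(m)}$ is a one-point-style extension of $A^{(m-1)}$ by the bimodule $DA$: it is the triangular matrix algebra with $A^{(m-1)}$ and $A$ on the diagonal. Its modules are triples $(X,Y,DA\otimes Y\to X)$-type data. Projective resolutions of $B$-modules can then be built from those over $A^{(m-1)}$ and over hereditary $A$, and the indecomposable injective/projective modules of each block are identified (projective-injective $B$-modules are the $P_j$ glued with $\nu P_j$). In that approach the key check is that $\Ext^i_B(M,B)$ for $0<i<\pdim M$ is computed inside the lower block where induction applies, while heredity of $A$ handles the new block in degree $\leq1$.
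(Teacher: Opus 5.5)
\textbf{The derived equivalence your main route rests on does not exist.} Your reduction to the spherical case via $(A^{(m)})^{\op}\cong(A^{\op})^{(m)}$ is fine and matches the paper. The rest of the plan, however, depends on $\Db(\mod A^{(m)})\simeq\Db(\mod A)$, and that is false for $m\geq1$.

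The algebra $A^{(m)}$ is an $(m+1)\times(m+1)$ triangular matrix algebra with $A$ on the diagonal, so it has $(m+1)$ times as many simple modules as $A$. A derived equivalence preserves the rank of the Grothendieck group, so no tilting complex over $A$ can have endomorphism ring $A^{(m)}$.

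Your candidate $T=\bigoplus_{j=0}^m\nu^jA[-j]$ shows this concretely. Since $\nu$ is an autoequivalence, $\Hom(\nu^jA[-j],\nu^{j+1}A[-j-1])\cong\Hom(A,DA[-1])=0$. On the other hand, $\Hom(\nu^jA[-j],\nu^{j+1}A[-j-1][1])\cong\Hom_A(A,DA)=DA\neq0$. So the bimodule $DA$ appears as a self-extension of $T$, not as degree-zero morphisms, and $T$ is not tilting. In the other direction, $\Hom(\nu^{j+1}A[-j-1],\nu^jA[-j])\cong\Ext^1_A(DA,A)$, which is already nonzero for $A=KA_2$. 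So even the claimed triangular shape of $\End(T)$ fails. Consequently there is no equivalence $F$ and no description of $\mod A^{(m)}$ as a region of $\Db(\mod A)$. Your Serre-duality bookkeeping in the second step has nothing to act on.

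\textbf{The fallback is the right strategy, but it omits what makes it work.} An induction on $m$ through triangular decompositions is what the paper does. Two ingredients are essential and are missing from your sketch.

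First, you must reduce to modules over a $2\times2$ corner. Because $A$ is hereditary and $DA$ is injective, the image of $X_{i+1}\otimes_A DA\to X_i$ is an injective $A$-module and hence a direct summand of $X_i$. This forces every indecomposable $A^{(m)}$-module to be supported on at most two consecutive diagonal blocks. The indecomposables over $A^{(1)}$ can then be listed explicitly.

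Second, and more importantly, sphericity alone is not an inductive invariant. Take $Z=(0,\dots,0,Y)$ supported on the last block. The decomposition $A^{(m)}=\begin{pmatrix}A^{(m-1)}&0\\N&A\end{pmatrix}$, together with $A\simeq\REnd_{A^{(m-1)}}(N)$, gives
\[\RHom_{A^{(m)}}(Z,A^{(m)})\simeq\RHom_{A^{(m-1)}}(Y\lotimes_AN,A^{(m-1)})[-1].\]
The shift turns nonvanishing $\Hom$ and $\Ext^n$ into nonvanishing $\Ext^1$ and $\Ext^{n+1}$, which would break sphericity. So you must prove, simultaneously, the stronger statement that $\RHom_{A^{(m)}}((0,\dots,0,Y),A^{(m)})$ is concentrated in a single degree. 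This uses full faithfulness of $-\lotimes_AN$, so that $Y\lotimes_AN$ remains a shifted indecomposable.

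The modules straddling the last two blocks need further input:
\begin{itemize}
\item the decomposition $\begin{pmatrix}A^{(m-2)}&0\\M&A^{(1)}\end{pmatrix}$;
\item the fact that the kernel of $A^{(1)}\to\End_{A^{(m-2)}}(M)$ is the injective module $D(A^{(1)}e_1)$;
\item the same single-degree hypothesis, applied to $Y\lotimes_{A^{(1)}}M$.
\end{itemize}
Your assertion that the intermediate $\Ext^i$ are ``computed inside the lower block where induction applies'' is exactly the step that fails without this strengthening.
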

For basics on the representation theory of replicated algebras, we refer for example to \cite[Section 3]{CIM}.
We first observe that each indecomposable module over $A^{(m)}$ is a module over some $2\times 2$-part.
Consider, more generally, the matrix algebra 
\[ B=
\begin{pmatrix}
A_1&0& 0&\cdots& 0& 0 \\
M_1&A_2&0&\cdots&0&0 \\
0&M_2&A_3&\cdots&0&0 \\
\vdots&\vdots&\vdots&\ddots&\vdots&\vdots \\
0&0&0&\cdots&A_{n-1}&0\\
0&0&0&\cdots&M_{n-1}&A_n
\end{pmatrix}. \]
Then each $B$-module can be presented as a sequence $(X_1,\ldots,X_n; f_1,\ldots,f_{n-1})$ with $X_i\in\Mod A_i$ and $f_i\colon X_{i+1}\otimes_{A_{i+1}} M_i\to X_i$ in $\Mod A_i$ such that $f_{i}\circ(f_{i+1}\otimes1)=0$.
\begin{lemma}\label{2times2}
	Suppose for each $1<i<n$ that $A_i$ is hereditary and that $M_i$ is injective as a right $A_i$-module. If $(X_1,\ldots,X_n)$ presents an indecomposable $B$-module, then $X_i\neq0$ for at most two successive $i$. 
\end{lemma}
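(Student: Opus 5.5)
The plan is to show that for $1<i<n$, whenever $X_i\neq 0$ and both $X_{i-1}$ and $X_{i+1}$ are involved, the module splits. First I will check the constraints on the structure maps for $X=(X_1,\ldots,X_n;f_1,\ldots,f_{n-1})$. The map $f_{i-1}$ connects level $i$ to level $i-1$. The map $f_i\colon X_{i+1}\otimes_{A_{i+1}}M_i\to X_i$ is a map of $A_i$-modules. The relation is $f_{i-1}\circ(f_i\otimes 1)=0$. By adjunction, $f_{i-1}$ corresponds to a map $g_{i-1}\colon X_i\to\Hom_{A_{i-1}}(M_{i-1},X_{i-1})$. The relation then says that $\Im f_i\subseteq \Ker g_{i-1}=:K_i$.

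The key step is to split $X_i$ along $\Im f_i$. Since $M_i$ is injective as a right $A_i$-module and $A_i$ is hereditary, $X_{i+1}\otimes_{A_{i+1}}M_i$ is a quotient of a direct sum of copies of $M_i$. It is therefore injective over $A_i$, because over a hereditary ring quotients of injectives are injective. (For infinite direct sums I will restrict to finitely generated modules, which is the case of interest, or use noetherianity.) Its image $\Im f_i$ is again injective, being a quotient of an injective. So $\Im f_i$ is a direct summand of $X_i$, and in fact of $K_i$: write $K_i=\Im f_i\oplus K_i'$ and $X_i=\Im f_i\oplus C$ with $K_i'\subseteq C$.

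Next I will build a decomposition of $X$ as a $B$-module. One summand is
\[(X_1,\ldots,X_{i-1},C,0,\ldots,0),\]
where the map into level $i-1$ is $f_{i-1}$ restricted to $C\otimes M_{i-1}$. The other is
\[(0,\ldots,0,\Im f_i,X_{i+1},\ldots,X_n),\]
with the map $f_i$ landing in $\Im f_i$. For this to be a $B$-decomposition, $f_{i-1}$ must vanish on $\Im f_i\otimes M_{i-1}$. This holds because $\Im f_i\subseteq\Ker g_{i-1}$, and adjunction turns that inclusion into exactly this vanishing. The map $f_i$ lands in $\Im f_i$ by construction. So the decomposition is compatible with all structure maps.

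Now suppose $X$ is indecomposable with $X_j\neq0$ for some $j$ and also some $X_k\neq 0$ with $k\geq j+2$. Take the smallest such $j$ and apply the splitting at $i=j+1$ (note $1<i<n$). One of the two summands must vanish. If the second vanishes, then $\Im f_i=0$ and $X_{i+1}=\cdots=X_n=0$, which contradicts $X_k\neq0$. If the first vanishes, then $X_j=0$, again a contradiction. Hence the nonzero components lie in at most two successive indices. Here I also need that the support has no gaps. If $X_a, X_b\neq0$ and $X_c=0$ for some $a<c<b$, then $X$ splits trivially as the part above $c$ plus the part below $c$, since all maps passing through level $c$ are zero.

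The main obstacle is the injectivity step. I must check that $X_{i+1}\otimes_{A_{i+1}}M_i$ is injective as a right $A_i$-module, which uses that $M_i$ is injective as a right $A_i$-module and that tensoring a free presentation gives a quotient of $M_i^{\oplus r}$. I must then use heredity of $A_i$ to pass injectivity to quotients. I also need to verify carefully that the adjunction translates $f_{i-1}\circ(f_i\otimes1)=0$ into the vanishing of $f_{i-1}$ on $\Im f_i\otimes M_{i-1}$. Right exactness of $-\otimes M_{i-1}$ gives this, because $f_i\otimes 1$ surjects onto the image of $\Im f_i\otimes M_{i-1}$.
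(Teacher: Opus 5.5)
Your proposal is correct and follows essentially the same route as the paper: you split off $\Im f_i$ as an injective direct summand of $X_i$ (using $X_{i+1}\otimes_{A_{i+1}}M_i\in\Fac M_i\subset\inj A_i$ and that $A_i$ is hereditary), observe that $f_{i-1}$ vanishes on $\Im f_i\otimes_{A_i}M_{i-1}$ because $X_{i+1}\otimes M_i\otimes M_{i-1}$ surjects onto it, and decompose $X$ as $(X_1,\ldots,X_{i-1},C,0,\ldots,0)\oplus(0,\ldots,0,\Im f_i,X_{i+1},\ldots,X_n)$. Your adjunction reformulation via $g_{i-1}$ and your separate treatment of gaps in the support are harmless additions; the gap case is in fact already covered by your splitting at $i=j+1$, since when $X_{j+1}=0$ it reduces to the trivial decomposition.
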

\begin{proof}
	We show that if there are three successive $i$ such that $X_i\neq0$, then the module is decomposable. We may assume that these are $i=1,2,3$, thus we have maps
	\[
	\begin{aligned}
		f_2&\colon X_3\otimes_{A_3}M_2\to X_2\\
		f_1&\colon X_2\otimes_{A_2}M_1\to X_1
	\end{aligned}
	\]
	such that the composite $X_3\otimes_{A_3}M_2\otimes_{A_2}M_1\to X_2\otimes_{A_2}M_1\to X_1$ is $0$.
	Since $M_2\in\inj A_2$, we have $X_3\otimes_{A_3}M_2\in\Fac M_2\subset \inj A_2$, and thus, $\Im f_2 \in \inj A_2$ since $A_2$ is hereditary. It gives rise to a decomposition $X_2=U\oplus V$ with $U=\Im f_2$, and thus a diagram
	\[ \xymatrix@R=1mm@!C=20mm{
		&U\otimes_{A_2}M_1\ar[dr]&\\
		X_3\otimes_{A_3}M_2\otimes_{A_2}M_1\ar@{->>}[ur]\ar[dr]_-0&\oplus &X_1\\
		&V\otimes_{A_2}M_1\ar[ur]& .} \]
	Now, this shows that $U\otimes_{A_2}M_1\to X_1$ must be $0$, and hence we have a decomposition $(X_1,X_2,X_3\ldots,X_n)=(X_1,V,0,\ldots,0)\oplus(0,U,X_3,\ldots,X_n)$.
\end{proof}

Now we consider $2\times 2$ triangular matrix algebras. 
Let $A$ and $B$ be rings, and let $M$ be an $(B,A)$-bimodule. Then a module over the triangular matrix algebra
\[ C=\begin{pmatrix} A&0\\M& B\end{pmatrix} \]
is given by $Z=(X,Y,f)$, where $X$ is an $A$-module, $Y$ is a $B$-module, and $f\colon Y\otimes_BM\to X$ is an $A$-linear map. We may equivalently give a $B$-linear map $g\colon Y\to\Hom_A(M,X)$ instead of $f$.
Let us compute the extensions $\Ext^i_C(Z,C)$ for $Z=(X,Y,f)$.
\begin{proposition}\label{Ext}
There exists a triangle in $\D(\Mod K)$:
\[ \xymatrix{\RHom_C(Z,C)\ar[r]^-{\left(\begin{smallmatrix}(-)e_1\\(-)e_2\end{smallmatrix}\right)}&\RHom_A(X,A\oplus M)\oplus \RHom_B(Y,B)\ar[r]^-{\left(\begin{smallmatrix}\cdot f&c\end{smallmatrix}\right)}&\RHom_A(Y\lotimes_BM,A\oplus M) }, \]
where $f\colon Y\lotimes_BM\to X$ is the multiplication map, and $c$ is induced by
\[ \xymatrix{ \RHom_B(Y,B)\ar[r]^-{\left(\begin{smallmatrix}0\\\lambda\end{smallmatrix}\right)}&\RHom_B(Y,\RHom_A(M,A)\oplus \REnd_A(M)) }, \]
where $\lambda\colon B\to\REnd_A(M)$ is the left multiplication, via the adjunction $\RHom_A(Y\lotimes_BM,A\oplus M)\simeq\RHom_B(Y,\RHom_A(M,A\oplus M))$.
\end{proposition}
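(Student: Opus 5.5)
The plan is to compute $\Hom_C(Z,C)$ by decomposing $C$ as a right $C$-module and resolving $Z$ through its standard gluing sequence. As right $C$-modules we have $C=e_1C\oplus e_2C$ with
\[ e_1C=(A,0,0), \qquad e_2C=(M,B,\mathrm{mult}). \]
Hence $\Hom_C(Z,C)$ is the set of pairs $(\alpha,\beta)$, where $\alpha\colon X\to A\oplus M$ is $A$-linear and $\beta\colon Y\to B$ is $B$-linear. These are subject to one compatibility condition: the two induced maps $Y\otimes_BM\to A\oplus M$, namely $\alpha\circ f$ and the map $y\otimes m\mapsto(0,\beta(y)m)$, must agree. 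Thus $\Hom_C(Z,C)$ is the kernel of
\[ \Hom_A(X,A\oplus M)\oplus\Hom_B(Y,B)\xrightarrow{(\cdot f,\ -c)}\Hom_A(Y\otimes_BM,A\oplus M). \]
The maps to the first two terms are $(-)e_1$ and $(-)e_2$, i.e.\ restriction to the two components. Here $c(\beta)=\lambda\circ\beta$, pushed through the adjunction $\Hom_A(Y\otimes_BM,N)\cong\Hom_B(Y,\Hom_A(M,N))$, and the $\RHom_A(M,A)$ component of $c$ is zero. Signs can be absorbed, which accounts for the form $(\cdot f\ \ c)$ in the statement.

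To upgrade this to a triangle, I use the canonical short exact sequence of $C$-modules
\[ 0\to (Y\otimes_BM,0)\to (X,0)\oplus (Y\otimes_BM,Y,1)\to Z\to 0. \]
Equivalently, $Z$ is the cone of $f$ glued along the recollement given by the idempotent $e_2$. To derive this I replace $Y$ by a projective resolution $Q_\bullet\to Y$ over $B$, so that $Y\lotimes_BM$ is represented by $Q_\bullet\otimes_BM$. This yields a triangle in $\D(\Mod C)$:
\[ (Y\lotimes_BM,0)\to (X,0)\oplus i(Y)\to Z\to, \]
where $i(Y)=(Y\lotimes_BM,Y,1)=Y\lotimes_Be_2C$ is the left adjoint of $e_2$ applied to $Y$.

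Applying $\RHom_C(-,C)$ and using three adjunctions gives the triangle. First, for modules concentrated at vertex $1$, $\RHom_C((W,0),C)\cong\RHom_A(W,Ce_1)=\RHom_A(W,A\oplus M)$; this holds because the inclusion of $A$-modules into $C$-modules is right adjoint to $-\otimes_C Ce_1$ and is exact, and $(A,0,0)$ is projective with $Ce_1=A\oplus M$ as right $A$-module. Second, $\RHom_C(Y\lotimes_Be_2C,C)\cong\RHom_B(Y,Ce_2)=\RHom_B(Y,B)$, since $Ce_2=(0,B)$ as $B$-module, restricted appropriately. These produce the three vertices of the triangle. Rotating the triangle, the connecting map is identified on the $(X,0)$ summand as composition with $f$, and on the $i(Y)$ summand as the map induced from the unit, which is precisely $c$ via $\lambda$.

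The main obstacle is the last identification: checking that the morphism induced by $(Y\lotimes_BM,0)\to i(Y)$ under the adjunction isomorphisms is exactly $c=\binom{0}{\lambda}$, and that the first map is $((-)e_1,(-)e_2)$. I would verify this by evaluating at degree zero on projective resolutions, where it reduces to the module-level kernel description above, and then extend by naturality in $Y$ and $X$ to complexes of projectives.
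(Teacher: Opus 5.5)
Your argument is correct, but it reaches the triangle by a different route from the paper.

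The paper tensors the bimodule sequence $0\to Ce_2\otimes_Be_2C\to C\to C/(e_2)\to 0$ with $Z$. This gives the recollement triangle $Y\lotimes_Be_2C\to Z\to N\lotimes_Ae_1C$, where $N$ is the cone of $f\colon Y\lotimes_BM\to X$. It then applies $\RHom_C(-,C)$ and places the result over the triangle obtained from $Y\lotimes_BM\to X\to N$ in an octahedron. Finally it passes to dg models (resolving $M$ as a bimodule, taking $Z$ cofibrant) to show that the third vertical map can be chosen so that collapsing the middle square gives the triangle.

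You instead present $Z$ directly as a homotopy pushout, via the explicit short exact sequence $0\to(Y\otimes_BM,0)\to(X,0)\oplus Y\otimes_Be_2C\to Z\to 0$. This stays exact after replacing $Y$ by a projective resolution $\pi\colon Q\to Y$. The new third term $(X,Q,f\circ(\pi\otimes 1))$ is quasi-isomorphic to $Z$, because quasi-isomorphisms of complexes of $C$-modules are detected by $(-)e_1$ and $(-)e_2$. One application of $\RHom_C(-,C)$, together with the adjunctions $-\lotimes_Ae_1C\dashv(-)e_1$ and $-\lotimes_Be_2C\dashv(-)e_2$, then yields the triangle.

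In your route the maps are identified by naturality alone. The component $c$ is $\RHom_B(Y,-)$ applied to restriction along $(M,0)\hookrightarrow e_2C$. That is a map $Ce_2\to\RHom_A(M,Ce_1)$ from a module to a complex concentrated in nonnegative degrees, so it is determined by its effect on $H^0$, namely $\binom{0}{\lambda}$.

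So your route needs only a resolution of $Y$ and avoids both the octahedron and the strictification step. The paper's route is organised around the standard recollement triangle for the idempotent $e_2$, but pays for this with extra work to pin down the maps after collapsing the square.

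Two small corrections.

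First, the functor $W\mapsto(W,0)$ is $-\otimes_Ae_1C$, which is \emph{left} adjoint to $(-)e_1=-\otimes_CCe_1$; it is right adjoint to $-\otimes_CC/(e_2)$ instead. The isomorphism $\RHom_C((W,0),C)\simeq\RHom_A(W,Ce_1)$ comes from this left adjunction, together with $e_1C\cong A$ being free as a left $A$-module.

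Second, your compatibility condition $\alpha\circ f=c(\beta)$ correctly shows that the natural second map is $(\cdot f\ \ {-c})$. This sign can be moved to another component, for instance onto $(-)e_1$, but it cannot be removed. Indeed $(\cdot f\ \ c)\circ\binom{(-)e_1}{(-)e_2}=2\,c\circ(-)e_2$, which is nonzero in general. So the statement has to be read up to this sign, which is harmless for its later uses.
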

\begin{proof}
	We denote by $e_1=\begin{pmatrix}1&0\\0&0\end{pmatrix}$, $e_2=\begin{pmatrix}0&0\\0&1\end{pmatrix}$ the idempotents in $C$.
	Consider the exact sequence
	\begin{equation}\label{C/(e_2)}
    \xymatrix{ 0\ar[r]& Ce_2\otimes_Be_2C\ar[r]& C\ar[r]&C/(e_2)\ar[r]& 0 }
    \end{equation}
	in $\Mod C^e$.
	Note that $Ce_2\otimes_Be_2C=Ce_2\lotimes_Be_2C$, so applying $Z\lotimes_C-$ yields a triangle
	\begin{equation}\label{Z}
    \xymatrix{ Y\lotimes_B{\begin{pmatrix}M&B\end{pmatrix}}\ar[r]& Z\ar[r]& Z\lotimes_CC/(e_2) }.
    \end{equation}
	This shows that, defining $N\in\rD(A)$ by the triangle $Y\lotimes_BM\to X\to N$, we have $Z\lotimes_CC/(e_2)=\begin{pmatrix}N&0\end{pmatrix}=N\lotimes_Ae_1C$.
	
	Next, we apply $\RHom_C(-,C)$ to the above triangle, which yields
	\[ \xymatrix{ \RHom_C(N\lotimes_Ae_1C,C)\ar[r]&\RHom_C(Z,C)\ar[r]&\RHom_B(Y,B)\ar[r]&\RHom_C(N[-1]\lotimes_Ae_1C,C) }, \]
	and the middle map identifies with the one induced by the functor $(-)e_2\colon\rD(C)\to\rD(B)$.
	
	Now, note that we have $\RHom_C(N\lotimes_Ae_1C,C)\xsimeq\RHom_A(N,Ce_1)$ along the functor $(-)e_1\colon\rD(C)\to\rD(A)$. It follows that the above triangle extends to an octahedron
	\begin{equation}\label{oct}
    \begin{gathered}\footnotesize
    \xymatrix@C=2.5mm{
		\RHom_C(N\lotimes_Ae_1C,C)\ar[r]\ar[d]^-\rsimeq&\RHom_C(Z,C)\ar[r]^-{(-)e_2}\ar[d]^-{(-)e_1}&\RHom_B(Y,B)\ar[r]\ar[d]&\RHom_C(N[-1]\lotimes_Ae_1C,C)\ar[d]^-\rsimeq\\
		\RHom_A(N,A\oplus M)\ar[r]&\RHom_A(X,A\oplus M)\ar[r]&\RHom_A(Y\lotimes_BM,A\oplus M)\ar[r]&\RHom_A(N[-1],A\oplus M), }
        \end{gathered}
    \end{equation}
	where the second row is given by the defining triangle of $N$.
    
    We claim that the third vertical map can be chosen so that collapsing the middle square yields the claimed triangle, from which the assertion follows.

    For this we work over resolutions. We take a projective resolution of $M$ as a $(B,A)$-bimodule, in particular as a right $A$-module, and regard $C$ as a dg algebra quasi-isomorphic to the original one.
    Also, we may assume that $Z$ is cofibrant as a dg $C$-module.
    Then applying $Z\otimes_C-$ to \eqref{C/(e_2)} gives a short exact sequence
    \begin{equation}\label{Zc}
    \xymatrix{ 0\ar[r]&Ze_2\otimes_Be_2C\ar[r]&Z\ar[r]&Z\otimes_CC/(e_2)\ar[r]& 0}
    \end{equation}
    which is a short exact sequence of cofibrant $C$-modules representing the triangle \eqref{Z}. Applying $(-)e_1$, we get a short exact sequence
    \begin{equation}\label{N}
    \xymatrix{ 0\ar[r]&Y\otimes_BM\ar[r]&X\ar[r]& \widetilde{N}\ar[r]& 0}
    \end{equation}
    consisting of cofibrant dg $A$-modules, and $\widetilde{N}$ represents $N$. In what follows, we will just denote $\widetilde{N}$ by $N$. Applying $\cHom_C(-,C)$ to \eqref{Zc} and $\cHom_A(-,Ce_1)$ to \eqref{N} yields a commutative diagram of short exact sequences
    \[ \xymatrix{
    0\ar[r]&\cHom_C(N\otimes_Ae_1C, C)\ar[r]\ar[d]^-\rsimeq&\cHom_C(Z,C)\ar[r]\ar[d]&\cHom_B(Y,B)\ar[r]\ar[d]&0\\
    0\ar[r]&\cHom_A(N,A\oplus M)\ar[r]&\cHom_A(X,A\oplus M)\ar[r]&\cHom_A(Y\otimes_BM,A\oplus M)\ar[r]& 0} \]
    which represents \eqref{oct}. Using this strictified sequence, we can directly obtain the claimed triangle from collapsing the strictified square.
\end{proof}

We note the following consequence which is used later.
\begin{corollary}\label{End}
In the setting of Proposition \ref{Ext}, suppose that $\lambda:B\xrightarrow{\simeq}\REnd_A(M)$ is a quasi-isomorphism. Then there is a triangle
\[ \xymatrix{\RHom_C(Z,C)\ar[r]^-{(-)e_1}&\RHom_A(X,A\oplus M)\ar[r]^-{\left(\begin{smallmatrix}\cdot f&0\end{smallmatrix}\right)}&\RHom_A(Y\lotimes_BM,A) }. \]
\end{corollary}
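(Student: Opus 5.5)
The plan is to specialise the triangle of Proposition \ref{Ext} using the hypothesis on $\lambda$. The triangle there is
\[ \RHom_C(Z,C)\to\RHom_A(X,A\oplus M)\oplus \RHom_B(Y,B)\xrightarrow{(\cdot f\ c)}\RHom_A(Y\lotimes_BM,A\oplus M). \]
Using the adjunction, the target splits as $\RHom_A(Y\lotimes_BM,A)\oplus\RHom_B(Y,\REnd_A(M))$. The component $c$ is $\begin{pmatrix}0\\ \lambda_*\end{pmatrix}$, where $\lambda_*=\RHom_B(Y,\lambda)$. When $\lambda$ is a quasi-isomorphism, $\lambda_*$ is an isomorphism in $\rD(\Mod K)$. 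So the map in the triangle has the block form
\[ \begin{pmatrix} \cdot f|_A & 0\\ \cdot f|_M & \lambda_*\end{pmatrix}\colon \RHom_A(X,A\oplus M)\oplus\RHom_B(Y,B)\to \RHom_A(Y\lotimes_BM,A)\oplus\RHom_B(Y,\REnd_A(M)). \]
Here $\cdot f|_A$ and $\cdot f|_M$ are the components of $\cdot f$ into the $A$- and $M$-parts of $A\oplus M$.

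Since $\lambda_*$ is invertible, I will do a Gaussian elimination. Precompose with the automorphism of the source given by
\[ \begin{pmatrix}1&0\\ -\lambda_*^{-1}(\cdot f|_M)&1\end{pmatrix}, \]
which makes the lower-left entry zero. Then the map becomes the direct sum of $\cdot f|_A\colon\RHom_A(X,A\oplus M)\to\RHom_A(Y\lotimes_BM,A)$, with zero on the $M$-part (matching $(\cdot f\ 0)$ in the statement), and the isomorphism $\lambda_*$. A triangle whose second map is a direct sum of a map $g$ and an isomorphism is isomorphic to the direct sum of the triangle on $g$ and a trivial triangle. Hence the cone (the shifted fibre) of the big map agrees with that of $g$.

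This gives a triangle $\RHom_C(Z,C)\to\RHom_A(X,A\oplus M)\xrightarrow{g}\RHom_A(Y\lotimes_BM,A)$. The first map is the first component of the new first map. Since the new first map is the old one composed with the inverse automorphism, its first component is still $(-)e_1$, because the automorphism is unitriangular and fixes the first coordinate.

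The main point to check is exactly this bookkeeping. Row and column operations change the morphisms in the triangle only by isomorphisms, and I must verify that the first component stays $(-)e_1$. If this is awkward in the triangulated category, I would instead do it at the strictified level of the proof of Proposition \ref{Ext}. There the maps are honest chain maps in a short exact sequence, and dropping a contractible (acyclic-cone) summand can be done explicitly.
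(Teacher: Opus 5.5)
Your proposal is correct and follows the same route as the paper, whose one-line proof says only that the isomorphism induced by $\lambda\colon B\xrightarrow{\simeq}\REnd_A(M)$ can be removed from the triangle of Proposition~\ref{Ext}. Your unitriangular change of coordinates, the splitting off of the trivial triangle on $\lambda_*$, and the check that the first component remains $(-)e_1$ are precisely the details behind that remark.
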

\begin{proof}
This follows from the description of the second map in Proposition \ref{Ext} since we may remove the isomorphism $B\xrightarrow{\simeq}\REnd_A(M)$ from the triangle.
\end{proof}

We note another easy consequence of Proposition \ref{Ext}.
\begin{lemma}\label{shita}
Let $C=\begin{pmatrix}A&0\\M&B\end{pmatrix}$ and assume that $M$ is injective as a right $A$-module.
If $X\in\mod A$ is spherical, then the $C$-module $Z=\begin{pmatrix}X&0\end{pmatrix}$ is spherical.
\end{lemma}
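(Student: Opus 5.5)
Apply Proposition \ref{Ext} to $Z=\begin{pmatrix}X&0\end{pmatrix}$, i.e.\ $Y=0$. Then $\RHom_B(Y,B)=0$ and $Y\lotimes_BM=0$, so the triangle collapses to an isomorphism
\[ \RHom_C(Z,C)\xsimeq\RHom_A(X,A\oplus M)=\RHom_A(X,A)\oplus\RHom_A(X,M). \]
Taking cohomology gives $\Ext^i_C(Z,C)\cong\Ext^i_A(X,A)\oplus\Ext^i_A(X,M)$ for all $i$. Since $M$ is injective as a right $A$-module, $\Ext^i_A(X,M)=0$ for $i>0$. Hence $\Ext^i_C(Z,C)\cong\Ext^i_A(X,A)$ for every $i\geq1$.

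Next I would check that $\pdim_CZ=\pdim_AX$. The module $e_1C=\begin{pmatrix}A&0\end{pmatrix}$ is projective over $C$, and $-\otimes_Ae_1C$ is exact: it sends $X$ to $\begin{pmatrix}X&0\end{pmatrix}$, i.e.\ it is extension by zero along the idempotent $e_1$. Applying it to a minimal projective resolution of $X$ gives a projective resolution of $Z$ of the same length, so $\pdim_CZ\leq\pdim_AX=:n$.

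For the reverse inequality, use the Ext identification. If $n\geq1$, then $\Ext^n_A(X,A)\neq0$, because for finite projective dimension the top Ext against the regular module is nonzero. Hence $\Ext^n_C(Z,C)\neq0$ and $\pdim_CZ\geq n$. The case $n=0$ is trivial, since then $Z=X\otimes_Ae_1C$ is projective. So $\pdim_CZ=n$.

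Finally, for $1\leq i<n$ we get $\Ext^i_C(Z,C)\cong\Ext^i_A(X,A)=0$, because $X$ is spherical. This is exactly sphericity of $Z$.

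I expect the main point needing care to be the precise identification of $\RHom_C(Z,C)$ in Proposition \ref{Ext} when $Y=0$. One has to check that the map $(-)e_1$ is an isomorphism on the nose, not merely that a cone vanishes. This is immediate, since the third term of the triangle is zero. The rest is routine bookkeeping.
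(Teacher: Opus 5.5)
Your proposal is correct and follows the paper's proof: apply Proposition~\ref{Ext} with $Y=0$ to get $\RHom_C(Z,C)\simeq\RHom_A(X,A)\oplus\RHom_A(X,M)$, then use injectivity of $M$ to kill the second summand in positive degrees. The one addition is your explicit check that $\pdim_C Z=\pdim_A X$, using the exact functor $-\otimes_A e_1C$ and the nonvanishing of the top Ext; the paper leaves this step implicit.
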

\begin{proof}
	Applying Proposition \ref{Ext} to $Y=0$, we get an isomorphism 
    \begin{equation*}
        \RHom_C(Z,C)\xsimeq\RHom_A(X,A)\oplus\RHom_A(X,M).
    \end{equation*}
    Now the assertion follows from $X$ being spherical and $\RHom_A(X,M)$ being concentrated in degree $0$ as $M$ is injective.
\end{proof}

Now we consider the case $A=B$ and $M=DA$, so that $C=\begin{pmatrix} A&0\\DA&A\end{pmatrix}$ is the first replication of $A$. We give a description of indecomposable objects over $C$.
\begin{lemma}\label{C-indec}
%Suppose that $A$ and $B$ are hereditary, and that $M$ is injective as a right $A$-module and as a left $B$-module. Then $Z=(X,Y,f)$ presents an indecomposable $C$-module if and only if we have one of the following.
Suppose that $A$ is hereditary. Then $Z=(X,Y,f)$ presents an indecomposable $C$-module if and only if we have one of the following.
\begin{enumerate}
\renewcommand\labelenumi{(\roman{enumi})}
\renewcommand\theenumi{\roman{enumi}}
\item $X$ is an indecomposable $A$-module and $Y=0$.
\item $Y$ is an indecomposable $A$-module and $X=0$.
\item $X$ is indecomposable injective, $Y$ is indecomposable projective, and $f\colon Y\otimes_ADA\to X$ is an isomorphism.
\item $0\neq X\in\inj A$, $0\neq Y\in\proj A$, and $f\colon Y\otimes_ADA\to X$ is a radical map which is surjective with indecomposable non-injective kernel. %and $g\colon Y\to\Hom_A(DA,X)$ is a radical map which is injective with indecomposable non-projective cokernel.
\end{enumerate}
\end{lemma}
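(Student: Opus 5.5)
Here $C=\begin{pmatrix}A&0\\DA&A\end{pmatrix}$ and a $C$-module is a triple $Z=(X,Y,f)$ with $f\colon Y\otimes_ADA\to X$. Equivalently $g\colon Y\to\Hom_A(DA,X)=\nu^{-1}X$. The plan is to split according to whether $X$ or $Y$ vanishes. If $Y=0$ or $X=0$, then $Z$ is indecomposable iff the nonzero component is, which gives (i) and (ii). So assume $X\neq0\neq Y$ and $Z$ indecomposable, and first reduce to $X\in\inj A$, $Y\in\proj A$.

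\textbf{Reduction.} Since $A$ is hereditary, $\Im f$ is a quotient of $Y\otimes_ADA\in\inj A$, hence injective, so $X=\Im f\oplus X'$. The submodule $(X',0)$ then splits off as a direct summand of $Z$. Indecomposability forces $X'=0$, so $f$ is surjective and $X$ is injective.

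Dually, work with $g$. Its image $\Im g\subseteq\nu^{-1}X$ is projective because $A$ is hereditary. Hence $\Ker g$ is a direct summand of $Y$, and $(0,\Ker g)$ is a direct summand of $Z$. Therefore $g$ is injective. Moreover, $\Hom_A(DA,-)$ is the inverse Nakayama equivalence $\inj A\to\proj A$. I also want $Y$ projective, so I plan to use $\Im g$ projective together with $g$ injective, giving $Y\cong\Im g$ projective.

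Consequently $Y=P$ is projective and $X$ is injective. Applying $\nu^{-1}$, the datum $f\colon\nu P\to X$ corresponds to a map between injectives. It also corresponds to $g\colon P\to\nu^{-1}X$, a map between projectives.

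\textbf{Classification of the remaining case.} Indecomposability of $Z$ now amounts to indecomposability of the morphism $f\colon\nu P\twoheadrightarrow X$ as an object of the morphism category of $\inj A$. By $\nu^{-1}$, this is the same as indecomposability of $g\colon P\hookrightarrow\nu^{-1}X$ in the morphism category of $\proj A$. Since $A$ is hereditary, an injective map of projectives $g$ is the minimal projective resolution of $\Coker g$ plus possibly identity summands $Q\xrightarrow{1}Q$. Thus $g$ is indecomposable iff either $g$ is an isomorphism between indecomposables, giving (iii), or $g$ is radical with $\Coker g$ indecomposable non-projective, i.e.\ minimal resolutions correspond bijectively to non-projective indecomposables.

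Translating back: $\Ker f=\tau\Coker g$ via the exact sequence $0\to\tau N\to\nu P_1\to\nu P_0\to\nu N=0$. Here $\nu N=0$ because hereditary non-projective indecomposables have $\Hom(N,A)=0$. So $\Ker f$ is indecomposable non-injective and $f$ is radical, which gives (iv).

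\textbf{Converse.} The converse runs the same argument backwards. The one point needing care is that endomorphisms of $Z$ in cases (iii) and (iv) are exactly endomorphisms of the morphism $f$. Since $\Hom_C(Z,Z)$ is the set of pairs commuting with $f$, this is immediate.

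\textbf{Main obstacle.} The step I expect to be hardest is the equivalence between indecomposable radical monomorphisms $P_1\to P_0$ and indecomposable non-projective modules. The delicate part is the radical condition: a summand of $\Coker g$ being projective would produce a zero-map summand $0\to Q$. That is excluded since $g$ is injective and $Y\neq0$, but this has to be tracked carefully. The splitting arguments also need Krull--Schmidt bookkeeping.
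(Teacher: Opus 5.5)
Your argument is correct and follows the paper's proof. You first split off a complement of $\Im f$ to get $f$ surjective and $X$ injective. Then you recognize an indecomposable $Z$ as an indecomposable two-term resolution over the hereditary algebra $A$. The only variation is that the paper reads $f$ directly as an injective resolution of $\Ker f$, whereas you pass to the adjoint $g$ on the projective side and return via $\tau$.

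Your extra step of splitting off $(0,\Ker g)$ makes explicit that $Y$ is projective, which the paper's proof uses only tacitly. One small correction: a summand $0\to Q$ of $g$ is excluded by indecomposability together with $Y\neq 0$, not by injectivity of $g$.
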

\begin{proof}
	We prove that if $X\neq0$ and $Y\neq0$, then the conditions in (iii) or (iv) hold.
	
	Let us first show that $f\colon Y\otimes_ADA\to X$ is surjective. Since $A$ is hereditary, we have $Y\otimes_ADA\in\Fac DA=\inj A$, and the same holds for $\Im f$. It follows that $\Im f\to X$ is a split monomorphism. Its complement would yield a direct summand of $Z$ as a $C$-module, so we see that $f$ is surjective by indecomposability of $Z$. This also shows that $X$ is injective.
	%Dually, we see that $g\colon Y\to\Hom_A(DA,X)$ is injective and $Y$ is projective.
	
	Now, let $K=\Ker f$ so that we have an exact sequence $0\to K\to Y\otimes_ADA\xrightarrow{f}X\to 0$. 
    If $K=0$, then the case (iii) holds. Suppose $K\neq0$. Notice that $f$ gives an injective resolution of $K$, and since $Z\in\mod C$ is indecomposable, we see that $K$ is indecomposable and the resolution is minimal. %Similarly, we see that $g$ is the minimal projective resolution of $\Coker g$ which is indecomposable.
	Finally, $X\neq0$ implies that $\Ker f$ is non-injective. %and $Y\neq0$ implies $\Coker g$ is non-projective. 
    Therefore, the conditions in (iv) hold.

    We have seen that each indecomposable $C$-module $Z=(X,Y,f)$ satisfies either (i)--(iv). We prove the converse. It is clear that this is the case for $Z$ of type (i)--(iii). If $Z$ is given as in (iv), each direct summand must be again of the form (iv) since $f$ gives the minimal injective resolution of a non-injective module. We deduce that $Z$ must be indecomposable since $\Ker f$ is.
\end{proof}

Let us give the following result which serves as a first induction step for Theorem \ref{replication}.
\begin{proposition}\label{firstreplicationbispherical}
Suppose that $A$ is hereditary. Then its first replication $C=\begin{pmatrix} A&0\\ DA&A\end{pmatrix}$ is bispherical.
Moreover, for each indecomposable $A$-module $Y$ regarded  as a $C$-module $Z=\begin{pmatrix} 0&Y\end{pmatrix}$, the complex $\RHom_C(Z,C)$ is concentrated in a single degree.
\end{proposition}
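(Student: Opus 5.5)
The plan is to go through the classification of indecomposable $C$-modules in Lemma \ref{C-indec} and compute $\RHom_C(Z,C)$ in each case using Proposition \ref{Ext} and its consequences. Only sphericity of $C$ needs proof. Indeed, $C^\op\simeq\begin{pmatrix} A^\op&0\\ D(A^\op)&A^\op\end{pmatrix}$ is the first replication of the hereditary algebra $A^\op$, so cosphericity of $C$ (sphericity of $C^\op$) then follows by symmetry. Since $\gldim C<\infty$, the top nonvanishing $\Ext^i_C(Z,C)$ occurs in degree $\pdim Z$. Hence it suffices to show that for each indecomposable $Z$, $\RHom_C(Z,C)$ has cohomology only in degree $0$ and possibly one further degree, which is then automatically $\pdim Z$. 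Throughout we use that $\End_A(DA)\simeq A$ and $\Ext^{>0}_A(DA,DA)=0$, so $\lambda\colon A\to\REnd_A(DA)$ is a quasi-isomorphism and Corollary \ref{End} applies. We also use two facts for hereditary $A$: an indecomposable $A$-module is either projective or admits no nonzero map to $A$; and every $A$-module $L$ satisfies $\Ext^{\ge2}_A(L,A)=0$.

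Type (i), $Z=(X,0)$: this is Lemma \ref{shita}, since $DA$ is injective and every $A$-module is spherical over a hereditary algebra.

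Type (ii), $Z=(0,Y)$: Corollary \ref{End} gives $\RHom_C(Z,C)\simeq\RHom_A(Y\lotimes_ADA,A)[-1]$. If $Y$ is projective, then $Y\lotimes_ADA=\nu Y$ is indecomposable injective. If $Y$ is non-projective, then $\Hom_A(Y,A)=0$, so $Y\lotimes_ADA\simeq\tau Y[1]$. In both cases we get $\RHom_A(L,A)$, shifted, for an indecomposable $L$. By the dichotomy above this is concentrated in a single degree: degree $0$ if $L$ is projective, degree $1$ otherwise. This also proves the "moreover" clause.

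Types (iii) and (iv): here $X$ is injective, $Y$ is projective, $Y\lotimes_ADA=\nu Y$, and we have $0\to K\to\nu Y\xrightarrow{f}X\to0$ with $K$ zero or indecomposable non-injective. Corollary \ref{End} gives a long exact sequence
\[ 0\to H^0\to\Hom_A(X,A\oplus DA)\to\Hom_A(\nu Y,A)\to H^1\to\Ext^1_A(X,A)\to\Ext^1_A(\nu Y,A)\to H^2\to0, \]
where $H^i:=\Ext^i_C(Z,C)$ and the maps are precomposition with $f$ (using $\Ext^{>0}_A(X,DA)=0$). Comparing with the sequence $\Hom_A(-,A)$, $\Ext^1_A(-,A)$ applied to $0\to K\to\nu Y\to X\to0$, we identify the pieces:
\begin{itemize}
\item $H^2\simeq\Ext^1_A(K,A)$, because $\Ext^1_A(-,A)$ is right exact when $\gldim A\le1$;
\item $H^1$ is an extension of the kernel of $\Ext^1_A(X,A)\to\Ext^1_A(\nu Y,A)$ by the cokernel of $\Hom_A(X,A)\to\Hom_A(\nu Y,A)$, and both are controlled by $\Hom_A(K,A)$.
\end{itemize}
If $K=0$ or $K$ is non-projective, then $\Hom_A(K,A)=0$, so $H^1=0$ and $Z$ is spherical. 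If $K$ is projective, then $H^2=\Ext^1_A(K,A)=0$, so cohomology lies only in degrees $0$ and $1$, and again $Z$ is spherical.

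I expect the main obstacle to be this last diagram chase. One must check that the connecting maps in the triangle of Corollary \ref{End} genuinely agree with those induced by $f$, so that the identification of $H^1$ and $H^2$ with terms involving $K$ is legitimate.
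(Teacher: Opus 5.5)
Your proposal is correct and follows essentially the same route as the paper: the case split from Lemma~\ref{C-indec}, Lemma~\ref{shita} for modules $(X,0)$, Corollary~\ref{End} for modules $(0,Y)$, and the observation that $C^\op$ is the first replication of $A^\op$ to get cosphericity. In the mixed case the paper applies the octahedral axiom to obtain a triangle $\RHom_A(K,A)[-1]\to\RHom_C(Z,C)\to\RHom_A(X,DA)$, which is the triangle-level form of your long exact sequence comparison, and the compatibility you flag as the main obstacle is not one, since Corollary~\ref{End} already states that the second map is $(\cdot f,0)$ with $f$ the structure map.
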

\begin{proof}
	Let $Z$ be a $C$-module presented by $(X,Y,f)$ with $X,Y\in\mod A$ and $f\colon Y\otimes_ADA\to X$. We can compute $\Ext^i_C(Z,C)$ by the triangle in Proposition \ref{Ext}. By $A\xsimeq\End_A(DA)$, we may apply Corollary \ref{End} to get a triangle 
	\begin{equation}\label{sph?}
	\xymatrix{\RHom_C(Z,C)\ar[r]&\RHom_A(X,A\oplus DA)\ar[r]&\RHom_A(Y\lotimes_ADA,A) }.
	\end{equation}
	Suppose that $Z$ is indecomposable. By Lemma \ref{C-indec}, we have the following three cases.
	\begin{enumerate}
	\item $X\in\mod A$ is indecomposable and $Y=0$.
	\item $Y\in\mod A$ is indecomposable and $X=0$.
	\item $X\in\inj A$, $Y\in\proj A$, and there is an exact sequence $0\to K\to Y\otimes_ADA\xrightarrow{f} X\to 0$ with $K=0$ or $K$ is indecomposable non-injective.
	\end{enumerate}

	Suppose the case (1). Then we know that $Z=\begin{pmatrix} X&0\end{pmatrix}$ is spherical by Lemma \ref{shita}. %Then the triangle \eqref{sph?} shows $\RHom_C(Z,C)\xsimeq\RHom_A(X,A\oplus DA)$. Since $X$ is spherical, this certainly shows $Z$ is spherical.
	
	In the case (2), the triangle \eqref{sph?} yields an isomorphism $\RHom_C(Z,C)\ysimeq\RHom_A(Y\lotimes_ADA,A)[-1]$. Since $Y$ is indecomposable and $A$ is hereditary, the object $Y\lotimes_ADA\in\Db(\mod A)$ is a module or a shift of a module. It follows that $\RHom_A(Y\lotimes_ADA,A)[-1]$ is concentrated in one degree, and hence so is $\RHom_C(Z,C)$. In particular, $Z$ is spherical.
	
	Finally, assume the case (3). Noting that the second map of \eqref{sph?} is induced by $f\colon Y\lotimes_ADA\to X$, we get a triangle
	\[ \xymatrix{ \RHom_A(K,A)[-1]\ar[r]&\RHom_C(Z,C)\ar[r]&\RHom_A(X,DA) } \]
    by the octahedral axiom.
	Since $K\in\mod A$ is $0$ or indecomposable, the first term is concentrated in degree $\pdim K+1$. This shows that $Z$ is spherical.
	
	Note that if $\Si=\begin{pmatrix} \La&0\\ U&\Ga\end{pmatrix}$ for a $(\Ga,\La)$-bimodule $U$, then $\Si^\op=\begin{pmatrix} \Ga^\op&0\\ U&\La^\op\end{pmatrix}$ with $U$ viewed as a $(\La^\op,\Ga^\op)$-bimodule. We conclude that $C^\op=\begin{pmatrix} A^\op&0\\DA&A^\op\end{pmatrix}$ is also spherical, in other words, $C$ is bispherical. 
\end{proof}

\begin{proof}[Proof of Theorem \ref{replication}]
	We prove that the following hold by induction on $m$:
	\begin{enumerate}
    \renewcommand\labelenumi{(\roman{enumi})}
    \renewcommand\theenumi{\roman{enumi}}
	\item the $m$-replication is spherical.
	\item For each indecomposable $X=\begin{pmatrix}0&\cdots&0&Y\end{pmatrix}$ over $A^{(m)}$, the complex $\RHom_{A^{(m)}}(X,A^{(m)})$ is concentrated in a single degree.
	\end{enumerate}
    We have the claim for $m=1$ by Proposition \ref{firstreplicationbispherical}.

	Let $Z$ be an indecomposable $A^{(m)}$-module. Then by Lemma \ref{2times2}, we may decompose $A^{(m)}$ into blocks
	\[ A^{(m)}=
	\left( \begin{array}{ccc|cc|ccc} 
		A &&&&&&& \\ 
		DA&\ddots&&&&&& \\
		&\ddots&A&&&&& \\ 
		\hline
		&&DA&A&0&&& \\
		&&&DA&A&&&\\
		\hline
		&&&&DA&A&&\\
		&&&&&\ddots&\ddots&\\
		&&&&&&DA&A
	\end{array}\right)\]
in such a way that $Z$ is a module over the middle part. To show that $Z$ is spherical as a module over $A^{(m)}$, we may assume that $Z$ is a module over the lower right $2\times2$-part; otherwise, the induction hypothesis and Lemma \ref{shita} show that $Z$ is indeed spherical.

We are thus in the following setting: $A^{(m)}=\begin{pmatrix}A^{(m-2)}&0\\M&A^{(1)}\end{pmatrix}$ with $M=\begin{pmatrix}0&\cdots&0&DA\\ 0&\cdots&0&0\end{pmatrix}$, $Z=\begin{pmatrix}0&Y\end{pmatrix}$ presents an $A^{(m)}$-module, and $Y=\begin{pmatrix}Y_1&Y_2\end{pmatrix}$ presents an indecomposable module over $A^{(1)}=\begin{pmatrix} A&0\\DA&A\end{pmatrix}$.

Since $Y$ is an indecomposable $A^{(1)}$-module, by Lemma \ref{C-indec}, we have the following three cases.
\begin{enumerate}
	\item $Y_1\in\mod A$ is indecomposable and $Y_2=0$.
	\item $Y_2\in\mod A$ is indecomposable and $Y_1=0$.
	\item $0\neq Y_1\in\inj A$, $0\neq Y_2\in\proj A$, and there is an exact sequence $0\to K\to Y_2\otimes_ADA\xrightarrow{f} Y_1\to 0$ with $K=0$ or $K$ is indecomposable non-injective.
\end{enumerate}

In the case (1), we may view $Z$ as a module over $A^{(m-1)}$, so the induction hypothesis yields the result.

Suppose the case (2). Then we may decompose $A^{(m)}$ as
\[ A^{(m)}=\begin{pmatrix} A^{(m-1)}&0\\ N&A\end{pmatrix} \]
with $N=\begin{pmatrix}0&\cdots&0&DA\end{pmatrix}$.
Since the canonical map $A\to\REnd_{A^{(m-1)}}(N)$ is a quasi-isomorphism, applying Corollary \ref{End} to this decomposition yields 
\[ \xymatrix{ \RHom_{A^{(m)}}(Z,A^{(m)})\ar[r]^-\simeq&\RHom_{A^{(m-1)}}(Y_2\lotimes_AN,A^{(m-1)})[-1]}. \]
%Applying Proposition \ref{Ext} to this decomposition, we get a triangle
%\[ \xymatrix{ \RHom_{A^{(m)}}(Z,A^{(m)})\ar[r]&\RHom_A(Y_2,A)\ar[r]&\RHom_{A^{(m-1)}}(Y_2\lotimes_AN,A^{(m-1)}\oplus N)}. \]
%Since there is an isomorphism $A\xsimeq\REnd_{A^{(m-1)}}(N)$, this yields 
%$\RHom_{A^{(m)}}(Z,A^{(m)})\simeq\RHom_{A^{(m-1)}}(Y_2\lotimes_AN,A^{(m-1)})[-1]$. 
The same canonical isomorphism shows that the functor $-\lotimes_AN\colon\Db(\mod A)\to\Db(\mod A^{(m-1)})$ is fully faithful, so we see that $Y_2\lotimes_AN$ is indecomposable. It follows from the second induction hypothesis that $\RHom_{A^{(m-1)}}(Y_2\lotimes_AN,A^{(m-1)})$ is concentrated in a single degree. This shows that the same holds for $\RHom_{A^{(m)}}(Z,A^{(m)})$, completing the induction of (ii). In particular, $Z$ is spherical.

Finally, assume the case (3). Applying Proposition \ref{Ext} to the matrix $A^{(m)}=\begin{pmatrix}A^{(m-2)}&0\\M&A^{(1)}\end{pmatrix}$ and $Z=\begin{pmatrix}0&Y\end{pmatrix}$ as in the setting, we get a triangle
\[ \xymatrix{ \RHom_{A^{(m)}}(Z,A^{(m)})\ar[r]&\RHom_{A^{(1)}}(Y,A^{(1)})\ar[r]&\RHom_{A^{(m-2)}}(Y\lotimes_{A^{(1)}}M,A^{(m-2)}\oplus M) }. \]
Note that we have a surjection $A^{(1)}\to\End_{A^{(m-2)}}(M)=\REnd_{A^{(m-2)}}(M)$. Letting $e_1=\begin{pmatrix}1&0\\0&0\end{pmatrix}$ and $e_2=\begin{pmatrix}0&0\\0&1\end{pmatrix}$ be the idempotents in $A^{(1)}$, the kernel of this surjection is $A^{(1)}e_2\lotimes_Ae_2A^{(1)}$, which is isomorphic to $\begin{pmatrix}DA&A\end{pmatrix}=D(A^{(1)}e_1)$ as a right $A^{(1)}$-module, thus is injective. We therefore obtain an isomorphism
\[ \xymatrix{\RHom_{A^{(m)}}(Z,A^{(m)})\ar@{-}[r]^-\simeq&\RHom_{A^{(1)}}(Y,D(A^{(1)}e_1))\oplus \RHom_{A^{(m-2)}}(Y\lotimes_{A^{(1)}}M,A^{(m-2)})[-1] }. \]
Since $\RHom_{A^{(1)}}(Y,D(A^{(1)}e_1))$ is concentrated in degree $0$, it is enough to show that the second term of the right-hand side is a concentrated in a single degree.
For this we compute the projective resolution of $Y=\begin{pmatrix} Y_1&Y_2\end{pmatrix}$ over $A^{(1)}$. Recall that $Y_1\in\inj A$, $Y_2\in\proj A$, and there is an exact sequence
\[ \xymatrix{ 0\ar[r]&K\ar[r]&Y_2\otimes_ADA\ar[r]&Y_1\ar[r]& 0 } \]
with $K=0$ or indecomposable non-injective. This shows that there is an exact sequence
\[ \xymatrix@R=2mm{
0\ar[r]&P_1\otimes_Ae_1A^{(1)}\ar[r]&P_0\otimes_Ae_1A^{(1)}\ar[dr]\ar[rr]&&Y_2\otimes_Ae_2A^{(1)}\ar[r]&Y\ar[r]& 0 \\
	&&&{\begin{pmatrix} K&0\end{pmatrix}}\ar[ur]&&& } \]
%\[ \xymatrix{ 0\ar[r]&{\begin{pmatrix} K&0\end{pmatrix}}\ar[r]&Y_2\otimes_Ae_2A^{(1)}\ar[r]&Y\ar[r]& 0 } \]
in $\mod A^{(1)}$, which gives $Y\lotimes_{A^{(1)}}M=\begin{pmatrix}0&\cdots&0&K\lotimes_ADA\end{pmatrix}[1]$ as an $A^{(m-2)}$-module. Since $K$ is $0$ or indecomposable and $A$ is hereditary, this is a shift of an $A$-module located at the lower right entry of $A^{(m-2)}$. We conclude by induction hypothesis that $\RHom_{A^{(m-2)}}(Y\lotimes_{A^{(1)}}M,A^{(m-2)})[-1]$ is concentrated in a single degree.

%We conclude the proof by noting that $A^{(m)}$ is also cospherical. Note that if $\Si=\begin{pmatrix} \La&0\\ U&\Ga\end{pmatrix}$ for a $(\Ga,\La)$-bimodule $U$, then $\Si^\op=\begin{pmatrix} \Ga^\op&0\\ U&\La^\op\end{pmatrix}$ with $U$ viewed as an $(\La^\op,\Ga^\op)$-bimodule. This shows that 
Finally, notice as in the proof of Proposition \ref{firstreplicationbispherical} that the opposite of $A^{(m)}$ is $(A^\op)^{(m)}$.
Therefore, $A^{(m)}$ is bispherical.
\end{proof}

\begin{corollary}\label{cor::replciatedalgebraqAG}
Let $A$ be the $m$-th replicated algebra of a hereditary algebra. Then $A$ is quasi-Auslander--Gorenstein. 
\end{corollary}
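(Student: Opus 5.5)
This follows by combining Theorem \ref{replication} with the corollary after Proposition \ref{qAG}. By Theorem \ref{replication}, the $m$-replicated algebra $A^{(m)}$ of a hereditary algebra is bispherical. A bispherical algebra is quasi-Auslander--Gorenstein by that corollary, so the statement follows at once.

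For completeness I would spell out the one subtle point, the left-right symmetry. Proposition \ref{qAG} only yields \emph{right} quasi-Auslander--Gorenstein from sphericity of $A^{(m)}$. This uses \cite[Theorem 0.1]{AR2} together with Lemma \ref{lemma::sphericalcontrolsgradeofExt} applied to each indecomposable right module.

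For the left condition, I would apply Proposition \ref{qAG} to the opposite algebra. As noted at the end of the proof of Theorem \ref{replication}, $(A^{(m)})^{\op}\cong (A^{\op})^{(m)}$, and $A^{\op}$ is again hereditary. Hence $(A^{\op})^{(m)}$ is spherical, so it is right quasi-Auslander--Gorenstein. By definition this means $A^{(m)}$ is left quasi-Auslander--Gorenstein.

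Since bisphericity of $A^{(m)}$ already includes both statements, there is no real obstacle. The only care needed is to use the cospherical half (equivalently, sphericity of the opposite algebra) for the left-sided condition, and not to rely on any symmetry of the quasi-Auslander--Gorenstein property, which fails in general.
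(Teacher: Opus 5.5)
Your proposal is correct and matches the paper's proof, which simply combines Theorem \ref{replication} (bisphericity of $A^{(m)}$) with Proposition \ref{qAG} and its unlabelled corollary for bispherical algebras. Your extra remark, that the left-sided condition comes from applying Proposition \ref{qAG} to $(A^{(m)})^{\op}\cong(A^{\op})^{(m)}$, is a correct unpacking of what "bispherical implies quasi-Auslander--Gorenstein" means.
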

\begin{proof}
This follows by combining the previous Theorem \ref{replication} with Proposition \ref{qAG}.
\end{proof}
This can be seen as some sort of generalization of the main result of \cite{CIM} for hereditary algebras, where it was shown that the $m$-th replicated algebras for certain values of $m$ are minimal Auslander--Gorenstein for Dynkin type algebras. 

The following example shows that even if $A$ is bispherical, the replication may not be spherical.
For the quiver and relations of the $m$-th replicated algebra for quiver algebras given by admissible relations consisting of monomial and commutativity relations, we follow the construction of \cite{Sch} for the repetitive algebra applied to only $m+1$ copies of the starting quiver.
\begin{example}
Let $\Lambda_1$ be the algebra presented by the following quiver with relations.
\[ \xymatrix{ 1\ar[r]^-a&2\ar[r]^-b& 3& ab=0} \]
Then its replication $\Lambda_2$ is presented by the following quiver with relations.
 \[ \xymatrix{
 	1\ar[r]^-a&2\ar[r]^-b\ar[d]_-{a^\ast}&3\ar[d]_-{b^\ast}&\\
 	&4\ar[r]^-a&5\ar[r]^-b& 6 }\qquad ab=0, \,  a^\ast a=b b^\ast \]
Then it is easy to see that the module $P(2)/\soc(P(2)) = \begin{smallmatrix} &2&\\3&&4\end{smallmatrix}$ is not spherical. This example also shows that a general triangular matrix ring $C=\begin{pmatrix} A&0\\ M&B\end{pmatrix}$ is not bispherical, even when the algebras $A$ and $B$ are bispherical and the $A$-$B$-bimodule $M$ is (co)spherical as an $A$-module and as a $B$-module. To see this note that in this example we have $\Lambda_2=\begin{pmatrix} \Lambda_1 &0\\ D(\Lambda_1) & \Lambda_1 \end{pmatrix}$ and $D(\Lambda_1)$ is (co)spherical as a left and right $\Lambda_1$-module.
\end{example}

\begin{example}
If $Q$ is a Dynkin quiver then the $m$-th replicated algebras of $KQ$ have especially interesting properties: They are always representation-finite and for infinitely many $m$ they are higher Auslander algebras, we refer to \cite[section 4]{CIM} for more details.
We just give here a simple example:
Let $Q$ be the Dynkin quiver of type $A_3$ with the following orientation:
% https://q.uiver.app/#q=WzAsMyxbMCwwLCIxIl0sWzEsMCwiMiJdLFsyLDAsIjMiXSxbMCwxLCJhIl0sWzIsMSwiYiIsMl1d
\[\begin{tikzcd}
	1 & 2 & 3
	\arrow["a", from=1-1, to=1-2]
	\arrow["b"', from=1-3, to=1-2]
\end{tikzcd}\]
Then the $m$-replicated algebra of $KQ$ for $m \geq 1$ can be described as follows:

Define a quiver $Q^{(m)}$ as follows.

\begin{enumerate}
\item The vertices are
\[
 i_r,
 \qquad i\in\{1,2,3\},\quad r=0,1,\dots,m.
\]
\item For every level $r=0,\dots,m$, there are horizontal arrows
\[
 1_r\xrightarrow{a_r}2_r\xleftarrow{b_r}3_r.
\]
\item For every adjacent pair of levels $r+1,r$, where $r=0,\dots,m-1$, there are returning arrows
\[
 \alpha_r:2_{r+1}\to1_r,
 \qquad
 \beta_r:2_{r+1}\to3_r.
\]
\end{enumerate}

And relations:
\[
 I^{(m)}=
 \left\langle
 a_{r+1}\beta_r,
 b_{r+1}\alpha_r,
 \alpha_r a_r-\beta_r b_r
 \;\middle|\;
 r=0,\dots,m-1
 \right\rangle.
\]
Then $(KQ)^{(m)} \cong K Q^{(m)}/I^{(m)}$ for $m \geq 1$.
For $m=2$ we have $Q^{(2)}=$
% https://q.uiver.app/#q=WzAsOSxbMCwwLCIxXzAiXSxbMSwwLCIyXzAiXSxbMiwwLCIzXzAiXSxbMCwxLCIxXzEiXSxbMCwyLCIxXzIiXSxbMSwxLCIyXzEiXSxbMSwyLCIyXzIiXSxbMiwxLCIzXzEiXSxbMiwyLCIzXzIiXSxbMCwxLCJhXzAiXSxbMiwxLCJiXzAiLDJdLFszLDUsImFfMSJdLFs3LDUsImJfMSIsMl0sWzQsNiwiYV8yIl0sWzgsNiwiYl8yIiwyXSxbNSwwLCJcXGFscGhhXzAiLDFdLFs1LDIsIlxcYmV0YV8wIiwxXSxbNiwzLCJcXGFscGhhXzEiLDFdLFs2LDcsIlxcYmV0YV8xIiwxXV0=
\[\begin{tikzcd}
	{1_0} & {2_0} & {3_0} \\
	{1_1} & {2_1} & {3_1} \\
	{1_2} & {2_2} & {3_2}
	\arrow["{a_0}", from=1-1, to=1-2]
	\arrow["{b_0}"', from=1-3, to=1-2]
	\arrow["{a_1}", from=2-1, to=2-2]
	\arrow["{\alpha_0}"{description}, from=2-2, to=1-1]
	\arrow["{\beta_0}"{description}, from=2-2, to=1-3]
	\arrow["{b_1}"', from=2-3, to=2-2]
	\arrow["{a_2}", from=3-1, to=3-2]
	\arrow["{\alpha_1}"{description}, from=3-2, to=2-1]
	\arrow["{\beta_1}"{description}, from=3-2, to=2-3]
	\arrow["{b_2}"', from=3-3, to=3-2]
\end{tikzcd}\]
and $I^{(2)}=\langle a_1 \beta_0, b_1 \alpha_0, \alpha_0 a_0-\beta_0 b_0, a_2 \beta_1, b_2 \alpha_1, \alpha_1 a_1-\beta_1 b_1 \rangle.$
By Theorem \ref{replication}, these algebras are bispherical.
\end{example}

\section{On a question by Venjakob}\label{section:7}
In his study of Iwasawa algebras \cite{V}, Venjakob considered certain quotient categories of a module category over an Auslander regular ring.
He raised a question about the structure of such categories.
We give a negative answer in general and provide a sufficient condition to have a positive answer in terms of our notion of (co)spherical algebras.

\subsection{The torsion pair}
To formulate the question of Venjakob, we first construct a torsion pair in a certain quotient category of $\mod\La$ for an Auslander regular algebra $\La$, more generally, for $\La$ satisfying the two-sided $(2,2)$-condition in the following sense.
\begin{definition}[\cite{Iy05c}]
Let $\La$ be a two-sided noetherian ring, and let
\[ 0\to \La\to I^0\to I^1\to\cdots \]
be the minimal injective resolution of the right $\La$-module $\La$.
For each $\ell,n>0$, the {\it $(\ell,n)$-condition} on $\La$ is defined by the following equivalent conditions.
\begin{itemize}
\item The flat dimension of $I^i$ is $<\ell$ for every $0\leq i<n$.
\item $\sgrade\Ext^\ell_{\La^\op}(X,\La)\geq n$ for all $X\in\mod\La^\op$.
\end{itemize}
\end{definition}
As before, the strong grade $\sgrade$ of a $\Lambda$-module $L$ is defined as 
\[
\sgrade_{\Lambda} L
:= \inf\{\, \operatorname{grade}_{\Lambda} M \mid M\subseteq L\,\}.
\]
It is easy to see that for each $n\geq0$, the subcategory formed by modules of strong grade $\geq n$ is a Serre subcategory.
We refer for example to \cite{I1} for more on the strong grade. We will use the following result in this section: 
\begin{lemma} \label{gradeequalstronggrade}
Let $\La$ be an Auslander--Gorenstein ring and $M$ a finitely generated $\Lambda$-module. Then $\grade M = \sgrade M$.
\end{lemma}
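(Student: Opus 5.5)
Since $\sgrade M\leq\grade M$ holds trivially ($M\subseteq M$), the plan is to prove $\grade N\geq\grade M$ for every submodule $N\subseteq M$. Put $n:=\grade M$; if $n=0$ there is nothing to show, so assume $n\geq1$ and argue by induction on $n$.

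The main tool is the characterisation of the Auslander condition through strong grades. An Auslander--Gorenstein ring satisfies the $(\ell,n)$-condition for all $\ell,n$ in the form $\sgrade\Ext^\ell_\La(X,\La)\geq\ell$ for all finitely generated $X$ on either side. Together with the finite self-injective dimension, this makes the standard double-Ext spectral sequence
\[ E_2^{p,q}=\Ext^p_{\La^\op}(\Ext^q_\La(M,\La),\La)\Rightarrow M \]
available (Ishikawa/Levasseur), concentrated in total degree $0$. By the Auslander condition, $E_2^{p,q}=0$ for $p<q$. Its abutment gives a filtration of $M$ whose subquotients are subquotients of $E_2^{q,q}=\Ext^q_{\La^\op}(\Ext^q_\La(M,\La),\La)$. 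Since $\Ext^q_\La(M,\La)=0$ for $q<n$, we get $E_2^{q,q}=0$ for $q<n$, so all subquotients come from $E_\infty^{q,q}$ with $q\geq n$.

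Next I would show that every subquotient $L$ of $E_2^{q,q}$ satisfies $\grade L\geq q$. Set $W:=\Ext^q_\La(M,\La)$, so that $E_2^{q,q}=\Ext^q_{\La^\op}(W,\La)$. The Auslander condition applied to $W$ gives $\sgrade\Ext^q(W,\La)\geq q$. The class of modules of strong grade $\geq q$ is a Serre subcategory, as noted before the lemma, so it is closed under submodules and quotients. Hence every subquotient $L$ has $\sgrade L\geq q$, and in particular $\grade L\geq q\geq n$. It follows that $M$ is filtered by modules of strong grade $\geq n$, and by Serre-ness $M$ itself has $\sgrade M\geq n=\grade M$. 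This yields the equality, and the induction becomes unnecessary.

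The main obstacle is justifying the spectral sequence and the vanishing $E_2^{p,q}=0$ for $p<q$ in the two-sided noetherian Auslander--Gorenstein setting. This is standard: it is essentially the proof that Auslander--Gorenstein rings have the property that $\grade$ is exact on the Serre filtration (Björk, Levasseur). I would cite it rather than reprove it, invoking the $(\ell,n)$-condition of \cite{Iy05c} in its strong-grade form for all $\ell,n$.
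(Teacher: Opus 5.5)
Your proof is correct, but it does not follow the paper's route: the paper gives no argument and simply cites \cite[Proposition 2.4]{I1}. You give a self-contained spectral-sequence argument in the style of Bj\"ork and Levasseur. It has three ingredients. First, finite self-injective dimension on both sides makes $M\to\RHom_{\La^\op}(\RHom_\La(M,\La),\La)$ an isomorphism. The biduality spectral sequence therefore filters $M$ by the terms $E_\infty^{q,q}$ with $q\geq\grade M$, and these are subquotients of $\Ext^q_{\La^\op}(\Ext^q_\La(M,\La),\La)$. Second, the Auslander condition for left modules gives these terms strong grade $\geq q$. Third, $\{X\mid \sgrade X\geq n\}$ is a Serre subcategory over any ring, which is an elementary long-exact-sequence check. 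Together these give $\sgrade M\geq\grade M$. There are two small slips. An Auslander--Gorenstein ring satisfies the $(\ell,\ell)$-condition for every $\ell$, not the $(\ell,n)$-condition for all $\ell,n$; for $\ell=1$ the latter would force every $I^i$ to be flat. The inequality $\sgrade\Ext^\ell\geq\ell$ that you actually use is the correct one. Also, the vanishing $E_2^{p,q}=0$ for $p<q$ is never needed, since only the diagonal terms contribute to total degree $0$. For comparison, a lighter argument avoids both the spectral sequence and finite injective dimension. Let $0\to\La\to I^0\to I^1\to\cdots$ be the injective resolution of $\La_\La$. If $\Hom_\La(M,I^i)=0$ for all $i<n$, the same holds for every submodule of $M$ because maps into injectives extend, so $\sgrade M\geq n$. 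Now suppose $\grade M\geq n$, take a resolution $P_\bullet$ of $M$ by finitely generated projectives, and set $C:=\Coker(P_{n-1}^*\to P_n^*)$. Then $0\to P_0^*\to\cdots\to P_n^*\to C\to 0$ is exact and $\Hom_\La(M,I^i)\cong\Tor_n^\La(I^i,C)$. This vanishes when $I^i$ has flat dimension $<n$. That route needs only the one-sided $(n,n)$-conditions. Yours uses Gorensteinness on both sides and the Auslander condition on the opposite side, but in exchange it exhibits an explicit filtration of $M$ by pieces of strong grade $\geq q$.
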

\begin{proof}
See for example \cite[Proposition 2.4]{I1}.
\end{proof}
Note that $\La$ is Auslander--Gorenstein if and only if $\idim\La<\infty$ on each side and satisfies the $(\ell,\ell)$-condition for every $\ell>0$.

We will be interested in the case where $\La$ satisfies the {\it two-sided $(2,2)$-condition}, that is, the $(2,2)$-condition on both $\La$ and on $\La^\op$.
By the left-right symmetry of the Auslander--Gorenstein property, see for example \cite[Theorem 3.7]{FGR}, Auslander-regular algebras form a subclass of rings satisfying the two-sided $(2,2)$-condition.

We set
\[ \refl\La=\{X\in\mod \La \mid X\to X^{**} \text{ is an isomorphism} \}, \]
the category of {\it reflexive modules}, where the map $X\to X^{**}$ is the evaluation map.
As pointed out in \cite{Ha9}, the two-sided $(2,2)$-condition is a sufficient condition for the category $\refl\La$ to behave well.

The following is the torsion pair we consider. We refer to \cite[Proposition 4.10]{BB} for the result for (commutative) normal surfaces.
\begin{proposition}\label{torp}
Let $\La$ be a two-sided noetherian ring satisfying the two-sided $(2,2)$-condition, and let
\begin{align*}
	\D&:=\{ X\in\mod\La \mid \sgrade X\geq2\},\\
	\E&:=\{ \Ext^1_{\La^\op}(X,\La)\in\mod\La \mid X\in\mod\La^\op\}.
	%\E&:=\{ X\in\mod\La \mid \sgrade X\geq1\}.
\end{align*}
Consider the Serre quotient $q\colon\mod\La\to(\mod\La)/\D$.
\begin{enumerate}
	\item The composite $\refl\La\to\mod\La\to(\mod\La)/\D$ is fully faithful.
	\item The pair $(q(\E),\refl\La)$ forms a torsion pair in $(\mod\La)/\D$.
%	\item The maximum exact structure on $\refl\La$ coincides with the extension-closed exact structure induced from $(\mod\La)/\D$.
\end{enumerate}
\end{proposition}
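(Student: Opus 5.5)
We plan to prove both parts from the standard facts on reflexive modules under the two-sided $(2,2)$-condition, using only the exact sequence
\[ 0\to \Ext^1_{\La^\op}(\Tr X,\La)\to X\xrightarrow{ev} X^{**}\to \Ext^2_{\La^\op}(\Tr X,\La)\to 0 \]
of \cite[Proposition 2.6]{AB}.

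The first step is to record two consequences of the $(2,2)$-condition. Applied to $\La$ itself, with $\ell=1$ and $n=2$ it gives $\sgrade\Ext^1_{\La^\op}(Y,\La)\geq 2$ for all $Y\in\mod\La^\op$, and with $\ell=2$ and $n=2$ it gives $\sgrade\Ext^2_{\La^\op}(Y,\La)\geq 2$. The $(1,2)$-statement follows from the $(2,2)$-statement: the flat dimension of $I^0,I^1$ is then $<2$, and one checks that in fact $\sgrade\Ext^1\geq1$ holds, which is the torsionness. We expect to need this bound in the refined form that $\E$ consists of modules of strong grade $\geq1$, so that $\Hom(\E,\La)=0$. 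The $\Ext^2$ statement gives $\Coker(ev)\in\D$ for every $X$. Applying the same reasoning to $\La^\op$, together with the duality between dual modules, yields the standard fact that every dual module $Y^*$ is reflexive, i.e.\ $\refl\La=\{Y^*\}$.

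For (1), take $R,R'\in\refl\La$ and a morphism $q(R)\to q(R')$ in the Serre quotient, represented by $R_0\to R'/D'$ with $R/R_0\in\D$ and $D'\in\D$. Reflexive modules are torsionless, so $\Hom(\D,R')=0$ because modules of grade $\geq1$ have no nonzero maps into $\La$, and we may take $D'=0$. To extend $R_0\to R'$ to $R$, we use that $\Ext^1_\La(D,R')=0$ for $D\in\D$ and $R'$ reflexive. This is the key step. We would prove it by writing $R'=Y^*$ and using a projective presentation $P_1\to P_0\to Y\to 0$, so that $0\to R'\to P_0^*\to P_1^*$ is exact. This reduces the claim to $\Hom(D,-)$ and $\Ext^1(D,-)$ vanishing against projectives, which is exactly $\grade D\geq2$. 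Uniqueness of the extension follows from $\Hom(D,R')=0$. Hence the functor is full and faithful.

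For (2), we must show two things. First, $\Hom_{(\mod\La)/\D}(q(E),q(R))=0$ for $E\in\E$ and $R$ reflexive. By the computation above this reduces to $\Hom_\La(E_0,R)=0$ for submodules $E_0\subseteq E$ with $E/E_0\in\D$, which holds because $R\subseteq$ projective and $\sgrade E\geq1$. Second, every object has a torsion/torsion-free decomposition. For $X\in\mod\La$, the sequence above gives $0\to \Ext^1(\Tr X,\La)\to X\to X^{**}$ with cokernel in $\D$. So in the quotient we get a short exact sequence $0\to q(\Ext^1(\Tr X,\La))\to q(X)\to q(X^{**})\to0$, whose first term lies in $q(\E)$ and whose last term lies in $\refl\La$ because $X^{**}$ is a dual module.

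The main obstacle is the grade bookkeeping. We must verify that the $(2,2)$-condition on both sides really yields $\grade\geq1$ for all of $\E$, and the reflexivity of dual modules. If that is unclear, we would invoke \cite{Ha9} for the reflexive-module facts.
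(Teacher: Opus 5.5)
Your architecture matches the paper's. Part (1) rests on $\Hom_\La(\D,R)=0=\Ext^1_\La(\D,R)$ for reflexive $R$. Your derivation of the $\Ext^1$-vanishing, from $R\cong(R^*)^*$ and the exact sequence $0\to R\to P_0^*\to P_1^*$, is correct and self-contained, whereas the paper simply quotes \cite[Lemma 3.3]{Ha9}. The torsion sequence in (2) comes from the Auslander--Bridger sequence, exactly as in the paper. However, the grade bookkeeping behind your proof of $\Hom_\A(q(\E),\refl\La)=0$ contains false claims, so that step has a genuine gap.

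\textbf{Two false claims.}

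First, the $(2,2)$-condition concerns $\ell=2$ only. It does not give the $(1,2)$-statement $\sgrade\Ext^1_{\La^\op}(Y,\La)\geq2$. If it did, then $\E\subseteq\D$ and $q(\E)=0$. This is contradicted by Proposition \ref{Venjakobcounterexample}: there $A$ is Auslander regular, yet $3\in\E$ has grade $1$, so $q(\E)=\add q(3)\neq0$.

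Second, the hypothesis does not give $\sgrade E\geq1$ for $E\in\E$ either. That is the $(1,1)$-condition ($I^0$ flat). It fails for hereditary algebras of dominant dimension $0$, which satisfy the two-sided $(2,2)$-condition trivially. For a hereditary algebra, $\E$ is the class of modules without projective summands. Over the path algebra of $1\to2\leftarrow3$ there are indecomposable non-projective modules containing a simple projective submodule. So your step ``$\Hom_\La(E_0,R)=0$ because $R$ embeds in a projective and $\sgrade E\geq1$'' has a false premise.

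\textbf{What is true and suffices.} You only need the plain bound $\grade E\geq1$, i.e.\ $\Hom_\La(\Ext^1_{\La^\op}(X,\La),\La)=0$ for all $X\in\mod\La^\op$. This follows from the injective hull $I^0$ of $\La_\La$ having flat dimension at most $1$, which is part of the hypothesis. For artin algebras this is the $i=1$ case of Auslander--Reiten's characterisation \cite[Theorem 0.1]{AR2}; the paper covers the general case by citing \cite[Lemma 3.3]{Ha9}. Note that this bound is on $\grade$, not on $\sgrade$.

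\textbf{Repair of (2).} Argue as in the paper's Lemma \ref{ff}. Your argument for (1) never used reflexivity of the source, so it shows that $\Hom_\La(L,R)\to\Hom_\A(L,R)$ is bijective for every $L\in\mod\La$ and $R\in\refl\La$. Concretely, take a morphism $q(E)\to q(R)$ represented by $E_0\to R$ with $E/E_0\in\D$. It extends to a map $E\to R$ because $\Ext^1_\La(E/E_0,R)=0$. That extension vanishes because $\Hom_\La(E,R)\subseteq\Hom_\La(E,P)=0$ for a projective $P\supseteq R$, using $\grade E\geq1$.

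With these corrections your proposal coincides with the paper's proof.
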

We need some preparation for the proof. In what follows, we write $\A:=(\mod\La)/\D$.
In particular, we recall the following classical result of \cite{AB}.

\begin{lemma}[{\cite[Proposition 2.6]{AB}}]\label{AB-sequence}
    Let $\La$ be a two-sided noetherian ring. Then for any $L\in\mod\La$ there is an exact sequence 
	\[ \xymatrix{ 0\ar[r]& \Ext^1_{\La^\op}(\Tr L,\La)\ar[r]&L\ar[r]&L^{**}\ar[r]&\Ext^2_{\La^\op}(\Tr L,\La)\ar[r]&0 } \]
    of $\La$-modules called the Auslander--Bridger sequence.
\end{lemma}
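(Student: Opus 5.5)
The statement to prove is the Auslander--Bridger sequence: for a finitely generated module $L$ there is an exact sequence $0\to\Ext^1_{\La^\op}(\Tr L,\La)\to L\to L^{**}\to\Ext^2_{\La^\op}(\Tr L,\La)\to 0$. The plan is to compute the Ext groups of $\Tr L$ directly from a projective presentation of $\Tr L$ obtained by dualizing one of $L$.

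First, fix a projective presentation $P_1\xrightarrow{f}P_0\xrightarrow{\pi}L\to0$ with $P_0,P_1$ finitely generated projective right $\La$-modules. By definition of the transpose, $\Tr L=\Coker(f^*\colon P_0^*\to P_1^*)$. This gives an exact sequence of left modules
\[ 0\to L^*\xrightarrow{\pi^*}P_0^*\xrightarrow{f^*}P_1^*\to\Tr L\to0. \]
Since $\Hom(-,\La)$ is left exact, the kernel of $f^*$ is $L^*$.

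Next, extend this to a projective resolution of $\Tr L$ by choosing a projective presentation $Q_1\to Q_0\xrightarrow{p} L^*\to0$ of the left module $L^*$. Splicing gives the resolution
\[ \cdots\to Q_1\to Q_0\to P_0^*\to P_1^*\to\Tr L\to0. \]
Apply $(-)^*=\Hom_{\La^\op}(-,\La)$ and use $P_i^{**}\cong P_i$ to obtain the complex
\[ 0\to P_1\xrightarrow{f}P_0\xrightarrow{g}Q_0^*\to Q_1^*. \]
Here $g$ is the composite $P_0\xrightarrow{\pi^{**}}\cdots$; more precisely, $g=p^*\circ\pi^{**}$ under the identification $P_0\cong P_0^{**}$, where $\pi^{**}\colon P_0^{**}\to L^{**}$ and $p^*\colon L^{**}\to Q_0^*$.

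The Ext groups are then read off from this complex. We get $\Ext^1(\Tr L,\La)=\Ker g/\Im f$ and $\Ext^2(\Tr L,\La)=\Ker(Q_0^*\to Q_1^*)/\Im g$. By left exactness of the dual, $p^*$ is injective with image $\Ker(Q_0^*\to Q_1^*)$, so $p^*$ identifies $L^{**}$ with that kernel. Hence $\Ker g=\Ker(\pi^{**})$ and $\Ext^2(\Tr L,\La)\cong L^{**}/\Im(\pi^{**})$.

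The main point is to identify $\pi^{**}$ with $ev_L\circ\pi$, using naturality of the evaluation map and $ev_{P_0}$ being an isomorphism. Since $\pi$ is surjective, this gives $\Im\pi^{**}=\Im ev_L$, so $\Ext^2(\Tr L,\La)\cong\Coker ev_L$. It also gives $\Ker g=\pi^{-1}(\Ker ev_L)$. Because $\Im f=\Ker\pi$, we obtain $\Ker g/\Im f\cong\Ker ev_L$. Together these assemble the four-term exact sequence with middle map $ev_L$.

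The only delicate step is the bookkeeping identification $g=p^*\circ ev_L\circ\pi$, which is a naturality check. Independence of the choices follows because $\Tr L$ is well defined up to projective summands, and those summands do not change $\Ext^{\ge1}$.
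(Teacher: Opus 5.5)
Your argument is correct and is the standard proof of \cite[Proposition 2.6]{AB}; the paper gives no proof of its own and only cites Auslander--Bridger. Each step checks out: you splice a presentation of $L^*$ onto $0\to L^*\to P_0^*\to P_1^*\to\Tr L\to 0$, you use left exactness of $(-)^*$ to identify $L^{**}$ with $\Ker(Q_0^*\to Q_1^*)$, and you use naturality $\pi^{**}\circ ev_{P_0}=ev_L\circ\pi$ together with $ev_{P_0}$ and $ev_{P_1}$ being isomorphisms, which holds because $P_0$ and $P_1$ are finitely generated.
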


\begin{lemma}\label{ff}
	Let $L\in\mod\La$ and $M\in\refl\La$. Then the canonical map $\Hom_\La(L,M)\to\Hom_\A(L,M)$ is an isomorphism.
\end{lemma}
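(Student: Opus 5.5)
We use the standard description of morphisms in a Serre quotient:
\[ \Hom_\A(L,M)=\varinjlim \Hom_\La(L',M/M'), \]
where the colimit runs over submodules $L'\subseteq L$ with $L/L'\in\D$ and submodules $M'\subseteq M$ with $M'\in\D$. The claim follows once we show two things for $M\in\refl\La$: (a) $M$ has no nonzero submodule in $\D$, so only $M'=0$ occurs; and (b) for $L'\subseteq L$ with $L/L'\in\D$, restriction $\Hom_\La(L,M)\to\Hom_\La(L',M)$ is bijective. Then every term of the colimit is canonically $\Hom_\La(L,M)$, and the canonical map is an isomorphism.

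For (a), a reflexive module is a submodule of the projective module $M^{**}$, up to the evaluation isomorphism. A nonzero submodule $N$ of a projective module has $\Hom_\La(N,\La)\neq0$, since the inclusion $N\subseteq\La^{\oplus r}$ followed by some coordinate projection is nonzero. So $\grade N=0$, and $N\notin\D$, since $\D$ requires grade $\geq2$.

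For (b), put $Q=L/L'$ and use the exact sequence
\[ 0\to\Hom_\La(Q,M)\to\Hom_\La(L,M)\to\Hom_\La(L',M)\to\Ext^1_\La(Q,M). \]
We have $\Hom_\La(Q,M)=0$: the image of any map $Q\to M$ is a quotient of $Q$, hence lies in the Serre subcategory $\D$, and it is a submodule of $M$, hence zero by (a). The remaining step is to show $\Ext^1_\La(Q,M)=0$ whenever $\grade Q\geq2$ and $M$ is reflexive. Here we use reflexivity in the form of the Auslander--Bridger sequence (Lemma~\ref{AB-sequence}) applied to $\Tr M$, or more directly as follows. Since $M\cong M^{**}$ and $M^*$ is finitely presented, take a presentation $P_1\to P_0\to M^*\to0$ of left modules and dualize; this gives an exact sequence
\[ 0\to M\to P_0^*\to P_1^*. \]
Let $C$ be the image of $P_0^*\to P_1^*$, so $C\subseteq P_1^*$. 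From $0\to M\to P_0^*\to C\to0$ we get the exact sequence
\[ \Hom_\La(Q,C)\to\Ext^1_\La(Q,M)\to\Ext^1_\La(Q,P_0^*). \]
The right term vanishes because $P_0^*$ is projective and $\grade Q\geq2$. The left term vanishes because $C$ embeds in a projective module and $\Hom_\La(Q,P)=0$ for projective $P$, again since $\grade Q\geq 2$ (indeed $\geq1$ suffices). Hence $\Ext^1_\La(Q,M)=0$.

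The main point to check is that this argument really only needs grade and not strong grade. It does: the submodule $L'$ only enters through its quotient $Q$, and $Q\in\D$ gives $\grade Q\geq\sgrade Q\geq2$. We should also confirm that the colimit description applies in this noetherian setting, where $\D$ is a Serre subcategory of $\mod\La$; this is standard Gabriel localization. The two-sided $(2,2)$-condition is not needed for this lemma itself.
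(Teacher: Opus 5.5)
Your argument is correct and essentially matches the paper's. Both reduce the lemma to $\Hom_\La(X,M)=0=\Ext^1_\La(X,M)$ for $X\in\D$ and $M\in\refl\La$, and your colimit over subobjects is equivalent to the paper's roofs $L\xleftarrow{s}N\xrightarrow{f}M$; the only difference is that the paper cites \cite[Lemma 3.3]{Ha9} for the vanishing while you prove it directly from $0\to M\to P_0^*\to P_1^*$. One slip: in (a) the module $M^{**}$ need not be projective, but the embedding $M\hookrightarrow P_0^*$ from your part (b) gives exactly what (a) needs.
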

\begin{proof}
Note that for each $X\in\D$ and $M\in\refl\La$, we have $\Ext^i_\La(X,M)=0$ for $i=0,1$, see \cite[Lemma 3.3]{Ha9}.

	We first show injectivity. If a map $L\to M$ in $\mod\La$ is $0$ in $\A$, then it factors through an object $X\in\D$. Since we have $\Hom_\La(X,M)=0$ for $X\in\D$ and $M\in\refl\La$, we see that the original map $L\to M$ has to be $0$.
	
	We next prove surjectivity. Let a morphism $L\to M$ in $\A$ be presented by a diagram $L\xleftarrow{s} N\xrightarrow{f} M$ in $\mod\La$ with $\Ker s, \Coker s\in\D$. By $\Hom_\La(\Ker s,M)=0$, we see that $f$ factors through $\Im s$. Also by $\Ext^1_\La(\Coker s,M)=0$, we see that $f$ must factor through $L$.
\end{proof}

\begin{proof}[Proof of Proposition \ref{torp}]
	The assertion (1) is immediate from Lemma \ref{ff}, so we prove (2). If $T=\Ext^1_{\La^\op}(X,\La)$ for some $X\in\mod\La^\op$ and $M\in\refl\La$, then Lemma \ref{ff} shows that $\Hom_\A(T,M)=\Hom_\La(T,M)$, which is $0$ for example by \cite[Lemma 3.3]{Ha9}. Also, let $L\in\mod\La$ and consider the Auslander--Bridger sequence from Lemma \ref{AB-sequence} above.
	The last term has $\sgrade\geq2$ thus is $0$ in $\A$, the first term is in $\E$, and the third term is reflexive by \cite[Proposition 1.6, Theorem 1.7]{AR2}, see also \cite[Proposition 1.2]{Ha9}. Therefore it gives the required torsion sequence for a given object $L\in\A$.
\end{proof}

Finally, we note that in special cases the subcategory $\E$ can be more easily identified as it is quite closely related to the subcategory $\T$ we have considered before.

\begin{lemma}\label{lemma::T=E}
    Let 
    \begin{align*}
	\T&:=\{M\in\mod A\mid \Hom_A(M,A)=0\}
    \end{align*}
    and $\E$ as before. Then we have $\T \subseteq \E$.
    In particular, $\T = \E$ if and only if $\grade \Ext^1_{\La^\op}(X,\La) \geq 1$ for all $X\in\mod\La^\op$.
    \begin{proof}
        If $L \in \T$, then $L^{**} = 0$ and thus $L \cong \Ext^1_{\La^\op}(\Tr L,\La) \in \E$ by the Auslander--Bridger sequence from Lemma \ref{AB-sequence}.
    \end{proof}
\end{lemma}

\subsection{Venjakob's question}
The torsion pair given in the previous subsection leads to the following formulation of Venjakob's question, which was originally for Auslander regular algebras.
\begin{question}[{cf.\,\cite[Question 3.17]{V}}]
When is the torsion pair $(q(\E),\refl\La)$ in Proposition \ref{torp} split?
\end{question}
Note here that we need the $(2,2)$-condition for the subcategory ${\sgrade\geq2}$ to be a Serre subcategory so that the quotient makes sense. 

We next show that there is a positive answer for cospherical finite dimensional algebras that satisfy the two-sided (2,2)-condition.
\begin{proposition}
Let $A$ be a finite dimensional cospherical algebra satisfying the two-sided $(2,2)$-condition. Then the torsion pair $(q(\E),\refl A)$ in Proposition \ref{torp} is split.
\end{proposition}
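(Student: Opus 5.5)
Since $A$ is cospherical, Theorem \ref{thm::characterisationcospherical} shows that $(\T,\F)$ is a split torsion pair in $\mod A$. The plan is to push this splitting through the quotient functor $q\colon\mod A\to\A=(\mod A)/\D$. We then show that $q(\T)$ and $q(\F)$ agree, up to the relevant closures, with $q(\E)$ and $\refl A$.

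It suffices to show that every indecomposable object of $\A$ lies in $q(\E)$ or in $\refl A$. Every object of $\A$ is $q(L)$ for some $L\in\mod A$. Decompose $L$ into indecomposables; since $(\T,\F)$ splits, $L=T\oplus F$ with $T\in\T$ and $F\in\F$. By Lemma \ref{lemma::T=E} we have $T\in\E$, so $q(T)\in q(\E)$. It remains to show $q(F)\in\refl A$ up to isomorphism in $\A$.

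For this, apply the Auslander--Bridger sequence (Lemma \ref{AB-sequence}) to $F$:
\[ 0\to\Ext^1_{A^\op}(\Tr F,A)\to F\to F^{**}\to\Ext^2_{A^\op}(\Tr F,A)\to0. \]
Since $F$ is a submodule of a projective, the evaluation map is injective, so the first term vanishes. The last term has strong grade at least $2$ by the $(2,2)$-condition, so it lies in $\D$. Hence $q(F)\cong q(F^{**})$ in $\A$, and $F^{**}$ is reflexive, as already used in the proof of Proposition \ref{torp}. Therefore every object of $\A$ is isomorphic to $q(T)\oplus F^{**}$ with $q(T)\in q(\E)$ and $F^{**}\in\refl A$. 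This is exactly a splitting of the torsion sequence, since torsion sequences are unique up to isomorphism.

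The only step needing care is that the decomposition $L\cong T\oplus F$ persists in $\A$ and gives the torsion sequence there. This is immediate because $q$ is additive and $\Hom_\A(q(\E),\refl A)=0$ by Proposition \ref{torp}. The genuine input, and the main point to check, is the vanishing of $\Ext^1_{A^\op}(\Tr F,A)$, i.e.\ that modules of $\F$ are torsionless. This holds because $\F$ consists precisely of submodules of projectives, as already noted in the proof of Theorem \ref{thm::characterisationcospherical}.
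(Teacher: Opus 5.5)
Your proof is correct and follows essentially the same route as the paper. You split $L=T\oplus F$ via Theorem \ref{thm::characterisationcospherical}, place $T$ in $\E$, and use the Auslander--Bridger sequence to identify $q(F)$ with the reflexive module $F^{**}$. The only difference is that the paper first upgrades $\T\subseteq\E$ to $\T=\E$ using sphericity of $A^\op$ and Lemma \ref{lemma::sphericalcontrolsgradeofExt}, whereas you rightly observe that the inclusion $\T\subseteq\E$ already suffices for the splitting.
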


\begin{proof}
Recall that $\D$ is the Serre subcategory of $\mod A$ consisting of modules of strong grade at least $2$. We have to show that each object in $(\mod A)/\D$ is isomorphic to a direct sum of objects in $q(\E)$ and in $\refl A$.

By Theorem \ref{thm::characterisationcospherical}, we have a split torsion pair $(\T,\F)$ in $\mod A$ with
\[
\begin{aligned}
\T&:=\{ M\in\mod A\mid \Hom_A(M,A)=0\}, \\
\F&:=\{ M\in\mod A\mid M \text{ is a submodule of a projective module}\}.
\end{aligned}
\]
Since $A^\op$ is spherical we have $\grade \Ext^1_{A^\op}(X,A) \geq 1$ for all $X\in\mod A^\op$ by Lemma \ref{lemma::sphericalcontrolsgradeofExt} and thus $\T=\E$ as subcategories of $\mod A$ by Lemma \ref{lemma::T=E}. Also, the Auslander--Bridger sequence, Lemma \ref{AB-sequence}, for $M\in\F$ shows that each object $M\in\F$ is isomorphic in $(\mod A)/\D$ to $M^{**}$ which is reflexive.
We conclude that $(\T,\F)$ being a split torsion pair implies that so is $(q(\E),\refl A)$.
\end{proof}

We immediately obtain the next result as any Auslander regular algebra satisfies the two-sided (2,2)-condition:
\begin{theorem} \label{splitARcorollary}
Let $A$ be a finite dimensional cospherical Auslander regular algebra. Then $(q(\E),\refl A)$ is split.
\end{theorem}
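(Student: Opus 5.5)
The theorem follows from the preceding Proposition, so the plan is to check its two hypotheses.

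First, an Auslander regular algebra $A$ is Auslander--Gorenstein of finite global dimension. Hence the minimal injective coresolution $0\to A\to I^0\to I^1\to\cdots$ of the right regular module satisfies $\operatorname{fd} I^i\leq i$ for all $i$. In particular $\operatorname{fd} I^0=0<2$ and $\operatorname{fd} I^1\leq 1<2$. This is exactly the first formulation of the $(2,2)$-condition on $A$.

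Second, by the left-right symmetry of the Auslander--Gorenstein property (\cite[Theorem 3.7]{FGR}, already quoted before Proposition \ref{torp}), $A^\op$ is also Auslander--Gorenstein. The same argument therefore gives the $(2,2)$-condition on $A^\op$. So $A$ satisfies the two-sided $(2,2)$-condition.

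Third, $A$ is cospherical by hypothesis, so the preceding Proposition applies and gives that the torsion pair $(q(\E),\refl A)$ of Proposition \ref{torp} is split.

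The one point to watch is that the preceding Proposition's proof uses that $A^\op$ is spherical, to obtain $\T=\E$ via Lemma \ref{lemma::sphericalcontrolsgradeofExt}. This is precisely what cosphericalness of $A$ means, since $A$ is cospherical if and only if $A^\op$ is spherical. So no further hypothesis is needed, and this step is bookkeeping rather than a real obstacle. The only substantive input is the Auslander--Gorenstein symmetry used in the second step.
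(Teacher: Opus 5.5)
Your proposal is correct and follows the paper's own argument: the paper derives the theorem directly from the preceding Proposition, noting that Auslander regular algebras satisfy the two-sided $(2,2)$-condition. That condition comes from the bounds $\operatorname{fd} I^i\le i$ together with the left--right symmetry of the Auslander--Gorenstein property. Your remark that cosphericalness of $A$ is exactly the sphericalness of $A^{\mathrm{op}}$ used in that Proposition's proof is also correct.
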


Motivated by the previous result, we checked whether the question of Venjakob still has a positive answer in the non-cospherical case for Auslander regular algebras. It turns out that the smallest (with respect to vector space dimension) Auslander regular Nakayama algebra that is not cospherical already gives a counterexample. So the question of Venjakob in the original formulation \cite[Question 3.17]{V} has a negative answer.
Due to its importance we state it as a Proposition rather than an example.
\begin{proposition} \label{Venjakobcounterexample}
Let $A$ be the Nakayama algebra given by the following quiver with relations
\[ \xymatrix{ 1\ar[r]^-\alpha&2\ar[r]^-\beta&3\ar[r]^-\gamma&4 &\alpha \beta=0}. \]
Then $A$ is an Auslander regular algebra such that the torsion pair $(q(\E),\refl A)$ in Proposition \ref{torp} is not split. In particular, this gives a negative answer to \cite[Question 3.17]{V}.
\end{proposition}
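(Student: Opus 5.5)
The plan is to verify everything by explicit computation, since $A$ is a small representation-finite Nakayama algebra. We read off the indecomposable projectives from the quiver and the relation $\alpha\beta=0$. Using the paper's convention (right modules, linear quiver), these are uniserial of Loewy lengths $(2,3,2,1)$, namely $P_1=\begin{smallmatrix}1\\2\end{smallmatrix}$, $P_2=\begin{smallmatrix}2\\3\\4\end{smallmatrix}$, $P_3=\begin{smallmatrix}3\\4\end{smallmatrix}$, $P_4=4$. Every indecomposable is an interval module $[i,j]$, and there are only nine of them. In particular the Kupisch series is not weakly decreasing, so by Corollary \ref{coro:LinearNakayama} $A$ is not cospherical. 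This is consistent with Theorem \ref{splitARcorollary}, and it tells us where non-splitting may occur.

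\emph{Step 1: Auslander regularity.} Write down the minimal injective resolution of $A_A$ and, dually, of ${}_AA$. The global dimension is finite, since all projective dimensions are at most $3$ and are computed directly from the uniserial syzygies. We then check that $\pdim I^i\le i$ for each term; for finite dimensional algebras, flat dimension equals projective dimension. This is a routine table computation. By the left--right symmetry of the Auslander--Gorenstein condition, one side suffices, but we may check both as a sanity test.

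\emph{Step 2: describe $\D$, $\E$, $\refl A$.} For each of the nine indecomposables $L$ we compute $\grade L$ from its minimal projective resolution. By Lemma \ref{gradeequalstronggrade} this equals $\sgrade L$, so $\D$ is the additive closure of the indecomposables of grade $\ge 2$. Next we compute $\Tr L$ and the Auslander--Bridger sequence of Lemma \ref{AB-sequence}. This gives the torsion part $t(L)=\Ext^1_{A^\op}(\Tr L,A)\in\E$ and the torsion-free part $L^{**}\in\refl A$, up to the cokernel $\Ext^2_{A^\op}(\Tr L,A)\in\D$.

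\emph{Step 3: a non-split object.} Among the nine modules we look for an indecomposable $L$ such that both $t(L)$ and $L^{**}$ are nonzero in $\A=(\mod A)/\D$. The likely candidate is the module causing non-cosphericalness: $L$ is a quotient of $P_2$ lying under the injective whose top is not simple injective, so that $L$ is not a submodule of a projective yet still maps nontrivially to $A$. We must then show that $q(L)$ is not isomorphic to $q(t L)\oplus q(L^{**})$. The first approach is to show that $q(L)$ is indecomposable in $\A$. For this we compute $\End_\A(q L)$: its image under the natural map to $\Hom_A(L,L^{**})$ is controlled by Lemma \ref{ff}, and we aim to show the endomorphism ring is local. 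The alternative approach is more direct. A splitting would give a retraction $q(L)\to q(tL)$ in $\A$, represented by a roof $L\leftarrow N\to tL$ with $\Ker$ and $\Coker$ of the left map in $\D$. Since there are only finitely many submodules $N$ of the uniserial $L$ with $L/N\in\D$, we can check that none of them admits a map onto $tL$ that restricts to the identity on $tL$, modulo $\D$.

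The main obstacle is Step 3. Morphisms in the Serre quotient are computed by roofs, and we must rule out all of them, not only those coming from $\mod A$. Because $A$ is uniserial and $\D$ has very few objects, the roofs can be enumerated by hand, so the argument should close by a finite check.
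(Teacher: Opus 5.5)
Your plan is correct and is essentially the paper's argument. The paper likewise finds $\D=\add(S_1)$, $\E=\T=\add(S_1\oplus S_3)$ and $\refl A=\proj A$, and uses exactly your candidate $L=[2,3]$ (with $tL=S_3$ and $L^{**}\cong P_1$); it concludes because $q(L)$ stays indecomposable under $(\mod A)/\D\simeq\mod KA_3$, while your retraction variant also closes at once, since $[2,3]$ has no composition factor $S_1$ and so $\Hom_\A(q[2,3],q[3])=\Hom_A([2,3],[3])=0$. One small slip: there are eight indecomposables, not nine, because $\alpha\beta=0$ excludes the intervals containing $\{1,2,3\}$.
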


\begin{proof}
    Find below the AR-quiver of $A$. %and $A^{\mathrm{op}}$.
    Here, we label the modules by their support, that is the vertices $1 \leq i \leq 4$ to which a non-zero vector space is assigned. 
    \begin{equation*}
        \begin{tikzpicture}
        \node at (-7,0) {4};\draw[->, shorten <= 1em , shorten >= 1em] (-7,0) -- (-6,1);
            \node at (-6,1) {34};\draw[->, shorten <= 1em , shorten >= 1em] (-6,1) -- (-5,2);\draw[->, shorten <= 1em , shorten >= 1em] (-6,1) -- (-5,0);
        \node at (-5,0) {3};\draw[->, shorten <= 1em , shorten >= 1em] (-5,0) -- (-4,1);
                \node at (-5,2) {234};\draw[->, shorten <= 1em , shorten >= 1em] (-5,2) -- (-4,1);
            \node at (-4,1) {23};\draw[->, shorten <= 1em , shorten >= 1em] (-4,1) -- (-3,0);
        \node at (-3,0) {2};\draw[->, shorten <= 1em , shorten >= 1em] (-3,0) -- (-2,1);
            \node at (-2,1) {12};\draw[->, shorten <= 1em , shorten >= 1em] (-2,1) -- (-1,0);
        \node at (-1,0) {1};
        %
        %\node at (1,0) {1};
        %    \node at (2,1) {21};
        %\node at (3,0) {2};
        %    \node at (4,1) {32};
        %\node at (5,0) {3};         \node at (5,2) {432};
        %    \node at (6,1) {43};
        %\node at (7,0) {4};
        \end{tikzpicture}
    \end{equation*}
    It is evident that $A$ is Auslander regular and that $1$ is the only indecomposable non-projective module on which $\Ext^1_A(-,A)$ vanishes. 
    Hence, we get that $\D = \add(1)$. 
    Therefore, $\mod(A)/\D \cong \mod(B)$ for $B$ the linear $A_3$-quiver since $2$ and $12$ become isomorphic. 

    Moreover, $A$ satisfies the $(1,1)$-condition as it is Auslander regular and thus we get 
    \begin{equation*}
        \E = \T = \{M\in\mod A\mid \Hom_A(M,A)=0\} = \add( 3 \oplus 1)
    \end{equation*}
    by Lemma \ref{lemma::T=E}. 
    Thus, $q(23) \notin q(\E) = \add (q(3))$. 
    
    On the other hand, we see that $\Hom_A(12, A) \cong \Hom_A(2, A) \cong \Hom_A(23, A)$ as $A^\op$-modules via precomposition so that $\refl A = \proj A$ and in particular, $23$ is neither reflexive, nor isomorphic to a reflexive module in $\mod(A)/\D$.
    Hence, as $q(23)$ remains indecomposable, we get that $q(23) \notin q(E) \oplus \refl A \neq \mod(A)/\D$.
\end{proof}

\section{Further examples and counterexamples}
In this section we give several examples and counterexamples.
Our first example is a bispherical algebra that is neither a replicated algebra of a hereditary algebra nor a Nakayama algebra:
\begin{example}
Let $B$ be the incidence algebra of a Boolean lattice of a 2-set, given by quiver
% https://q.uiver.app/#q=WzAsNCxbMCwxLCIxIl0sWzEsMCwiMiJdLFsyLDEsIjQiXSxbMSwyLCIzIl0sWzAsMSwiXFxhbHBoYSJdLFsxLDIsIlxcYmV0YSJdLFswLDMsIlxcZ2FtbWEiLDJdLFszLDIsIlxcZGVsdGEiLDJdXQ==
\[\begin{tikzcd}
	& 2 & \\
	1 && 4 \\
	& 3
	\arrow["\beta", from=1-2, to=2-3]
	\arrow["\alpha", from=2-1, to=1-2]
	\arrow["\gamma"', from=2-1, to=3-2]
	\arrow["\delta"', from=3-2, to=2-3]
\end{tikzcd}\] and relations $\langle \alpha \beta-\gamma \delta \rangle $. 
$B$ is an algebra of global dimension 2 with 11 indecomposable modules. %We leave it to the reader to verify that all 11 indecomposable modules are spherical and since $B \cong B^{op}$, $B$ is bispherical.
As indicated in the picture below, $(\textcolor{blue}{\T},\textcolor{red}{\F})$ defines a split torsion-pair on $\mod(B)$, hence $B$ is cospherical and since $B \cong B^{op}$, $B$ is bispherical.
We denote by $P_i$ the indecomposable projective $B$-modules, by $I_i$ the indecomposable injective $B$-modules and by $S_i$ the simple $B$-modules.
We write $M=\rad P_1$ and $N=P_1/\soc P_1$. 
%$M$ is the unique indecomposable $B$-module with dimension vector $[0,1,1,1]$ and $N$ is the unique indecomposable $B$-module with dimension vector $[1,1,1,0]$.
The Auslander--Yoneda algebra of $B$ is given by $(KQ/I)^{op}$ (which is isomorphic to $KQ/I$ as $B \cong B^{op}$) with $Q$ and $I$ as follows: 
$Q=$
% https://q.uiver.app/#q=WzAsMTEsWzAsMSwiU180Il0sWzEsMCwiUF8yIl0sWzEsMiwiUF8zIl0sWzIsMSwiTSJdLFszLDAsIlNfMiJdLFszLDEsIlBfMSJdLFszLDIsInNfMyJdLFs0LDEsIk4iXSxbNSwwLCJJXzIiXSxbNSwyLCJJXzMiXSxbNiwxLCJTXzEiXSxbMCwxLCJjIl0sWzAsMiwiZCIsMl0sWzEsMywiZyIsMl0sWzIsMywiaCJdLFszLDQsInAiLDJdLFszLDUsInIiLDJdLFszLDYsInEiLDJdLFs2LDcsImIiLDJdLFs1LDcsInUiLDFdLFs0LDcsImEiLDFdLFs3LDgsInMiLDFdLFs3LDksInQiLDFdLFs5LDEwLCJmIiwxXSxbOCwxMCwiZSIsMV0sWzEwLDMsIngiLDEseyJsYWJlbF9wb3NpdGlvbiI6MjAsIm9mZnNldCI6MywiY29sb3VyIjpbMCw2MCw2MF19LFswLDYwLDYwLDFdXSxbOCwyLCJ5IiwxLHsibGFiZWxfcG9zaXRpb24iOjgwLCJvZmZzZXQiOi00LCJjb2xvdXIiOlswLDYwLDYwXX0sWzAsNjAsNjAsMV1dLFs5LDEsInoiLDEseyJsYWJlbF9wb3NpdGlvbiI6ODAsImNvbG91ciI6WzAsNjAsNjBdfSxbMCw2MCw2MCwxXV0sWzcsMCwidyIsMix7ImxhYmVsX3Bvc2l0aW9uIjo4MCwib2Zmc2V0IjotMywiY29sb3VyIjpbMCw2MCw2MF19LFswLDYwLDYwLDFdXV0=
\[\begin{tikzcd}
	& {\textcolor{red}{P_2}} && {\textcolor{blue}{S_2}} && {\textcolor{blue}{I_2}} & \\
	{\textcolor{red}{S_4}} && {\textcolor{red}{M}} & {\textcolor{red}{P_1}} & {\textcolor{blue}{N}} && {\textcolor{blue}{S_1}} \\
	& {\textcolor{red}{P_3}} && {\textcolor{blue}{S_3}} && {\textcolor{blue}{I_3}}
	\arrow["g"{description}, from=1-2, to=2-3]
	\arrow["a"{description}, from=1-4, to=2-5]
	\arrow["e"{description}, from=1-6, to=2-7]
	\arrow["s"{description}, from=2-5, to=1-6]
	\arrow["y"{description, pos=0.8}, color={rgb,255:red,214;green,92;blue,92}, from=1-6, to=3-2, bend left=8]
	\arrow["c"{description}, from=2-1, to=1-2]
	\arrow["d"{description}, from=2-1, to=3-2]
	\arrow["p", from=2-3, to=1-4]
	\arrow["r"{description}, from=2-3, to=2-4]
	\arrow["q"', from=2-3, to=3-4]
	\arrow["w"{description, pos=0.8}, color={rgb,255:red,214;green,92;blue,92}, from=2-5, to=2-1, bend left=70]
	\arrow["t"{description}, from=2-5, to=3-6]
	\arrow["x"{description, pos=0.2}, color={rgb,255:red,214;green,92;blue,92}, from=2-7, to=2-3,bend right=70]
	\arrow["h"{description}, from=3-2, to=2-3]
	\arrow["z"{description, pos=0.8}, color={rgb,255:red,214;green,92;blue,92}, from=3-6, to=1-2, bend right = 8]
	\arrow["b"{description}, from=3-4, to=2-5]
	\arrow["f"{description}, from=3-6, to=2-7]
    \arrow["u"{pos=0.4}, from=2-4, to=2-5]
\end{tikzcd}\]
with relations $I=$

$\langle ex-yh,
wc-tz,
wd-sy,
se-tf,
fx-zg,
ru-pa-qb,
gq,
hp,
uw,
xr,
at,
bs,
cg-dh \rangle$.
\end{example}
We remark that the previous example is isomorphic to the tensor product of the hereditary algebra of Dynkin type $A_2$ with itself.
However, the next example shows that it is in general not true that being a spherical algebra is closed under tensor products.
\begin{example}
Let $B_1=K A_2$ be the hereditary linear oriented path algebra of Dynkin type $A_2$ and $B_2=KA_3$ the hereditary linear oriented path algebra of Dynkin type $A_3$ and let $A:= B_1 \otimes B_2$.
$A$ is given by quiver and relations as follows $A=KQ/I$ with 
$Q=$
% https://q.uiver.app/#q=WzAsNixbMCwwLCIxIl0sWzEsMCwiMiJdLFswLDEsIjMiXSxbMSwxLCI0Il0sWzAsMiwiNSJdLFsxLDIsIjYiXSxbMCwxLCJhIl0sWzAsMiwiYyIsMl0sWzIsMywiZCIsMl0sWzEsMywiYiJdLFsyLDQsImUiXSxbNCw1LCJmIl0sWzMsNSwiZyJdXQ==
\[\begin{tikzcd}
	1 & 2 \\
	3 & 4 \\
	5 & 6
	\arrow["a", from=1-1, to=1-2]
	\arrow["c"', from=1-1, to=2-1]
	\arrow["b", from=1-2, to=2-2]
	\arrow["d"', from=2-1, to=2-2]
	\arrow["e", from=2-1, to=3-1]
	\arrow["g", from=2-2, to=3-2]
	\arrow["f", from=3-1, to=3-2]
\end{tikzcd}\]
and $I=\langle ab-cd, dg-ef \rangle$.
The unique indecomposable module with dimension vector $[1,1,1,0,0,0]$ has projective dimension 2 and is not spherical.
\end{example}

We next give the Auslander--Yoneda algebra of a bispherical Nakayama algebra.
\begin{example}
The Auslander--Yoneda algebra of the Nakayama algebra with Kupisch series [4,4,3,2,1] is given as $A=KQ/I$ with quiver
$Q=$
% https://q.uiver.app/#q=WzAsMTQsWzQsMywiNyJdLFszLDIsIjUiXSxbMiwzLCIzIl0sWzEsMiwiMiJdLFswLDMsIjEiXSxbMiwxLCI0Il0sWzQsMSwiOCJdLFs1LDIsIjkiXSxbNiwzLCIxMSJdLFszLDAsIjYiXSxbNSwwLCIxMCJdLFs2LDEsIjEyIl0sWzcsMiwiMTMiXSxbOCwzLCIxNCJdLFsxLDAsImFfezU3fSIsMV0sWzIsMSwiYV97MzV9IiwxXSxbMywyLCJhX3syM30iLDFdLFs0LDMsImFfezEyfSIsMV0sWzMsNSwiYV97MjR9IiwxXSxbNSwxLCJhX3s0NX0iLDFdLFsxLDYsImFfezU4fSIsMV0sWzYsNywiYV97ODl9IiwxXSxbMCw3LCJhX3s3OX0iLDFdLFs3LDgsImFfezksMTF9IiwxXSxbNSw5LCJhX3s0Nn0iLDFdLFs5LDYsImFfezY4fSIsMV0sWzYsMTAsImFfezgsMTB9IiwxXSxbMTAsMTEsImFfezEwLDEyfSIsMV0sWzcsMTEsImFfezksMTJ9IiwxXSxbMTEsMTIsImFfezEyLDEzfSIsMV0sWzgsMTIsImFfezExLDEzfSIsMV0sWzEyLDEzLCJhX3sxMywxNH0iLDFdLFsxMyw2LCJ0X3sxNCw4fSIsMSx7ImNvbG91ciI6WzAsNjAsNjBdfSxbMCw2MCw2MCwxXV0sWzEyLDEsInRfezEzLDV9IiwxLHsibGFiZWxfcG9zaXRpb24iOjMwLCJjdXJ2ZSI6LTIsImNvbG91ciI6WzAsNjAsNjBdfSxbMCw2MCw2MCwxXV0sWzcsMywidF97OTJ9IiwxLHsibGFiZWxfcG9zaXRpb24iOjcwLCJjdXJ2ZSI6LTIsImNvbG91ciI6WzAsNjAsNjBdfSxbMCw2MCw2MCwxXV0sWzgsNSwidF97MTEsNH0iLDEseyJsYWJlbF9wb3NpdGlvbiI6NjAsImNvbG91ciI6WzAsNjAsNjBdfSxbMCw2MCw2MCwxXV0sWzExLDIsInRfezEyLDN9IiwxLHsibGFiZWxfcG9zaXRpb24iOjQwLCJjb2xvdXIiOlswLDYwLDYwXX0sWzAsNjAsNjAsMV1dLFs2LDQsInRfezgxfSIsMSx7ImNvbG91ciI6WzAsNjAsNjBdfSxbMCw2MCw2MCwxXV1d
\[\begin{tikzcd}
	&&& 6 && 10 \\
	&& 4 && 8 && 12 \\
	& 2 && 5 && 9 && 13 \\
	1 && 3 && 7 && 11 && 14
	\arrow["{a_{6,8}}"{description}, from=1-4, to=2-5]
	\arrow["{a_{10,12}}"{description}, from=1-6, to=2-7]
	\arrow["{a_{4,6}}"{description}, from=2-3, to=1-4]
	\arrow["{a_{4,5}}"{description}, from=2-3, to=3-4]
	\arrow["{a_{8,10}}"{description}, from=2-5, to=1-6]
	\arrow["{a_{8,9}}"{description}, from=2-5, to=3-6]
	\arrow["{t_{8,1}}"{description}, color={rgb,255:red,214;green,92;blue,92}, from=2-5, to=4-1]
	\arrow["{a_{12,13}}"{description}, from=2-7, to=3-8]
	\arrow["{t_{12,3}}"{description, pos=0.4}, color={rgb,255:red,214;green,92;blue,92}, from=2-7, to=4-3]
	\arrow["{a_{2,4}}"{description}, from=3-2, to=2-3]
	\arrow["{a_{2,3}}"{description}, from=3-2, to=4-3]
	\arrow["{a_{5,8}}"{description}, from=3-4, to=2-5]
	\arrow["{a_{5,7}}"{description}, from=3-4, to=4-5]
	\arrow["{a_{9,12}}"{description}, from=3-6, to=2-7]
	\arrow["{t_{9,2}}"{description, pos=0.7}, color={rgb,255:red,214;green,92;blue,92}, curve={height=-12pt}, from=3-6, to=3-2]
	\arrow["{a_{9,11}}"{description}, from=3-6, to=4-7]
	\arrow["{t_{13,5}}"{description, pos=0.3}, color={rgb,255:red,214;green,92;blue,92}, curve={height=-12pt}, from=3-8, to=3-4]
	\arrow["{a_{13,14}}"{description}, from=3-8, to=4-9]
	\arrow["{a_{1,2}}"{description}, from=4-1, to=3-2]
	\arrow["{a_{3,5}}"{description}, from=4-3, to=3-4]
	\arrow["{a_{7,9}}"{description}, from=4-5, to=3-6]
	\arrow["{t_{11,4}}"{description, pos=0.6}, color={rgb,255:red,214;green,92;blue,92}, from=4-7, to=2-3]
	\arrow["{a_{11,13}}"{description}, from=4-7, to=3-8]
	\arrow["{t_{14,8}}"{description}, color={rgb,255:red,214;green,92;blue,92}, from=4-9, to=2-5]
\end{tikzcd}\]
and $I$ the ideal containing all commutativity relations and additionally the following zero relations.
\begin{equation*}
    \{a_{1,2}a_{2,3}, a_{3,5}a_{5,7}, a_{7,9}a_{9,11}, a_{11,13}a_{13,14},a_{6,8}t_{8,1}, t_{11,4}a_{4,6}, a_{10,12}t_{12,3}, t_{14,8}a_{8,10}\} \subset  I
\end{equation*}

Here, we view the arrows $t$ as the extensions with projective-injective middle term and maps canonically given by the associated paths in our quiver. 
(Notice that you may view any rectangle in the AR-quiver as a short exact sequence by equipping the diagonally up-going path to the cokernel with a $-1$.)
The additional zero relations not coming from the AR quiver then just realize the fact that projective injectives have no non-trivial extensions.

\end{example}
We next give an example of a Nakayama algebra that is spherical but not cospherical and it will also show that being a spherical algebra is not closed under derived equivalences:
\begin{example}
The Auslander--Yoneda algebra of the Nakayama algebra with Kupisch series [2,3,2,1] is given as $A=(KQ/I)^{\mathrm{op}}$ with quiver
$Q=$

% https://q.uiver.app/#q=WzAsOCxbMCwyLCIxIl0sWzEsMSwiMiJdLFsyLDAsIjMiXSxbMywxLCI1Il0sWzIsMiwiNCJdLFs0LDIsIjYiXSxbNSwxLCI3Il0sWzYsMiwiOCJdLFswLDEsImFfMSJdLFsxLDIsImFfMiJdLFsyLDMsImFfNCJdLFsxLDQsImFfMyJdLFs0LDMsImFfNSIsMl0sWzMsNSwiYV82Il0sWzUsNiwiYV83Il0sWzYsNywiYV84Il0sWzcsNSwidF97MX0iLDEseyJjb2xvdXIiOlswLDYwLDYwXX0sWzAsNjAsNjAsMV1dLFs1LDEsInRfMiIsMSx7ImxhYmVsX3Bvc2l0aW9uIjoyMCwiY29sb3VyIjpbMCw2MCw2MF19LFswLDYwLDYwLDFdXSxbMywwLCJ0XzMiLDEseyJsYWJlbF9wb3NpdGlvbiI6NzAsImNvbG91ciI6WzAsNjAsNjBdfSxbMCw2MCw2MCwxXV1d
\[\begin{tikzcd}
	&& 3 \\
	& 2 && 5 && 7 \\
	1 && 4 && 6 && 8
	\arrow["{a_4}", from=1-3, to=2-4]
	\arrow["{a_2}", from=2-2, to=1-3]
	\arrow["{a_3}", from=2-2, to=3-3]
	\arrow["{t_3}"{description, pos=0.7}, color={rgb,255:red,214;green,92;blue,92}, from=2-4, to=3-1]
	\arrow["{a_6}", from=2-4, to=3-5]
	\arrow["{a_8}", from=2-6, to=3-7]
	\arrow["{a_1}", from=3-1, to=2-2]
	\arrow["{a_5}"', from=3-3, to=2-4]
	\arrow["{t_2}"{description, pos=0.2}, color={rgb,255:red,214;green,92;blue,92}, from=3-5, to=2-2]
	\arrow["{a_7}", from=3-5, to=2-6]
	\arrow["{t_{1}}"{description}, color={rgb,255:red,214;green,92;blue,92}, from=3-7, to=3-5]
\end{tikzcd}\]
and relations $I=\langle a_1 a_3,a_5 a_6,a_7 a_8,a_2 a_4-a_3 a_5,t_2 a_2,a_4 t_3,a_6 t_2-t_3 a_1,t_1 a_7,a_8 t_1 \rangle.$
The opposite is due to our convention of composition of arrows being opposite to that of morphisms -- which is the canonical composition order for the Auslander--Yoneda algebra.

We also remark that as a gentle tree algebra this algebra is derived equivalent to a hereditary algebra of Dynkin type $A_4$ (see for example \cite[Chapter IX. 6]{ASS}), so being bispherical is not preserved under derived equivalences.
\end{example}

Our last observation shows that the condition that all simple left and right modules are spherical does not imply that the algebra is bispherical. 
\begin{example}
Let $A=KQ/I$ with 
$Q=$
% https://q.uiver.app/#q=WzAsMixbMCwwLCIxIl0sWzEsMCwiMiJdLFswLDEsImEiXSxbMSwwLCJiIiwwLHsib2Zmc2V0IjotM31dXQ==
\[\begin{tikzcd}
	1 & 2
	\arrow["a", shift left=1, from=1-1, to=1-2]
	\arrow["b", shift left=1, from=1-2, to=1-1]
\end{tikzcd}\]
$I=\langle a b \rangle.$
Then $A$ has global dimension 2 and all simple $A$-modules are spherical, but $A$ is not spherical since we know that a spherical Nakayama algebra has an acyclic quiver.
\end{example}
It would be interesting to study and classify algebras with all simple modules being spherical.

\section*{Acknowledgements}
This research started at the Junior Research Retreat in Bad Boll 2025 which was supported by the Hausdorff School for Mathematics (HSM) and funded by the Deutsche Forschungsgemeinschaft (DFG, German Research Foundation) under Germany's Excellence Strategy -- EXC-2047/2 -- 390685813.
This project benefited from the use of the GAP-package \cite{QPA}.
The second author is supported by JSPS KAKENHI Grant Numbers JP22KJ0737/JP25K17233, and is supported by Kavli Institute for the Physics and Mathematics of the Universe (WPI), The University of Tokyo, as an affiliate member.

\end{document}